\definecolor{bred}{rgb}{0.8,0,0}
\definecolor{bred}{rgb}{0.8,0,0}
\newcommand{\E}{\mathbb{E}}
\newcommand{\hl}{h_\lambda}
\newcommand{\rl}{r_\lambda}
\newcommand{\xn}{\bar{x}^\lambda_n}
\newcommand{\p}{p^{\lambda,n}}
\newcommand{\x}{\tilde{x}^{X,n,\lambda}}
\newcommand{\V}{\tilde{V}^{X,n,\lambda}}
\newcommand{\z}{\tilde{z}^{X,n,\lambda}}
\newcommand{\eig}{\operatorname{eig}}
\newcommand{\po}{\left(}
\newcommand{\pf}{\right)}
\newcommand{\R}{\mathbb R}
\newcommand{\N}{\mathbb N}
\newcommand{\W}{\mathcal W}
\newcommand{\na}{\nabla}
\newcommand{\Id}{I_d}
\newcommand{\bx}{\bar x}
\newcommand{\bv}{\bar v}
\newcommand{\nv}[1]{#1}
\renewcommand{\phi}{\varphi}
\newcommand{\pv}{\Phi_V}
\newtheorem{lemma}{Lemma}
\newtheorem{theorem}{Theorem}
\newtheorem{assumption}{Assumption}
\newtheorem{definition}{Definition}
\newtheorem{proposition}{Proposition}
\newtheorem{corollary}{Corollary}
\title{Contractive kinetic Langevin samplers\\
beyond global Lipschitz continuity}
\author[$\ast$,$\diamond$,$c$]{Iosif Lytras}
\author[$\sharp$,$\ast$]{Panayotis Mertikopoulos}
\affil[$\ast$]{\small Archimedes / Athena Research Centre, Athens, Greece}
\affil[$\diamond$]{National Technical University of Athens, Athens, Greece}
\affil[$\sharp$]{\small Univ. Grenoble Alpes, CNRS, Inria, Grenoble INP, LIG, 38000 Grenoble, France}
\affil[$c$]{\small Corresponding author}
\date{September 2025}
\begin{document}
\allowdisplaybreaks
\maketitle

\begin{abstract}
    In this paper, we examine the problem of sampling from log-concave distributions with (possibly) superlinear gradient growth under kinetic (underdamped) Langevin algorithms. Using a carefully tailored taming scheme, we propose two novel discretizations of the kinetic Langevin SDE, and we show that they are both contractive and satisfy a log-Sobolev inequality. Building on this, we establish a series of non-asymptotic bounds in $2$-Wasserstein distance between the law reached by each algorithm and the underlying target measure.
\end{abstract}
\section{Introduction}
Consider the problem of sampling from a distribution of the form $\pi \propto \exp (-u)$ where the potential function $u\colon\mathbb{R}^{d} \to \mathbb{R}$ is known but the normalizing constant isn't.
This problem has a wide range of applications in, among others, machine learning, Bayesian inference, physics, and computer science, but the high dimensionality of the sampling space and the geometry of the potential function can make the design of efficient samplers a highly challenging endeavor.

A promising way to overcome (at least partially) the difficulties that arise in this setting is by simulating the associated overdamped Langevin SDE
$$
\begin{aligned}\label{eq-over-Langevin}
\mathrm{d} L_t & =-\nabla u\left(L_t\right) \mathrm{d} t+\sqrt{\frac{2}{\beta}} \mathrm{~d} B_t, \quad t \geq 0 \\
L_0 & =\theta_0
\end{aligned}
$$
and exploiting the fact that this SDE admits the target measure $\exp(-\beta u)$ as its unique invariant measure. Inspired by this observation Langevin-based algorithms have become a popular choice for sampling from distributions in high dimensional spaces.
There has been a lot of work in providing non-asymptotic results for Unadjusted Langevin algorithm (ULA) under various assumptions such as Lipschitz continuity of $\nabla u$ and convexity of $u$. Under the assumption of convexity and gradient Lispchitz continuity important results are obtained in \citet{dalalyan2017theoretical, durmus2017nonasymptotic, durmus2019high, hola, convex}, while in the non-convex case, under convexity at infinity or dissipativity assumptions, one may consult \citet{berkeley} \citet{majka2020nonasymptotic}, \citet{erdogdu2022convergence} for ULA  while for the Stochastic Gradient variant (SGLD) important works are \citet{raginsky}, \citet{nonconvex}, \citet{zhang2023nonasymptotic}.\\ More recently, starting with the work of \citet{vempala2019rapid} important estimates have been obtained under the assumption that the target measure $\pi$ satisfies an isoperimetric inequality and the gradient of $u$ satisfies a global Lipschitz continuity, (\citet{mou2022improved}, \citet{ balasubramanian2022towards}).
The latter assumption has also been relaxed to a weakly smooth assumption (with the gradient still satisfying a linear growth property), see \citet{nguyen2021unadjusted} and \citet{erdogdu2021convergence}.\\
Recent advances on sampling has led to a connection with optimization (\cite{vempala2019rapid},\cite{chewi2024ballistic}) where overdamped Langevin dynamics can be seen as an analogous to gradient descent. It is therefore natural to explore momentum-like algorithms (\cite{nesterov1983method}) for possible improvements. An example of such a method is the discretization of the underdamped/kinetic Langevin SDE given as 
\begin{equation}\label{u-LSDE}
\begin{aligned}
     dV_t&=\left(-\gamma V_t-\nabla u(\theta_t)\right)dt+ \sqrt{2\gamma}dB_t
     \\
    d\theta_t&=V_t
\end{aligned}
\end{equation}
where $\gamma>0$ is the friction coefficient and $B_t$ is a $d$-dimensional Brownian motion.
Similarly to the overdamped Langevin SDE, the discretization of this diffusion can be used as both an MCMC sampler and non-convex optimizer ( \cite{gao2021global}) since under appropriate conditions, the Markov process $\left\{\left(\tilde{\theta}_{t}, \tilde{V}_{t}\right)\right\}_{t \geq 0}$ has a unique invariant measure given by
\begin{equation}\label{eq-pibeta}
{\Pi}(\mathrm{d} \theta, \mathrm{d} v) \propto \exp \left(-\left(\frac{1}{2}|v|^{2}+u(\theta)\right)\right) \mathrm{d} \theta \mathrm{d} v.
\end{equation}
Consequently, the marginal distribution of \eqref{eq-pibeta} in $\theta$ is precisely the target measure $\pi$. This means that sampling from \eqref{eq-pibeta} in the extended space and then keeping the samples in the $\theta$ space defines a valid sampler for the density $\pi$.
The underdamped Langevin MCMC has shown improved convergence rates in the case of convex and gradient Lipschitz smooth potential $u$, e.g see \cite{cheng2018underdamped},  \cite{dalalyan2018sampling},\cite{monmarche2021high}\cite{altschuler2024faster}. In the non-convex setting important work has been done in \cite{Deniz}, \cite{Gao}, \cite{chau2019stochastic},\cite{10.3150/20-BEJ1297},\cite{zhang2023improved}, while there has been a lot of effort in analyzing the contractive behaviour of these algorithms (\cite{monmarche2021high}, \cite{leimkuhler2024contraction}).
\\ 
All these works assume that the gradient of the potential $u$ grows at most linearly. The current article aims to extend the landscape and allow for super-linear gradient growth aiming to answer the following questions:
\begin{itemize}
    \item Can we create contractive numerical schemes based on the underdamped Langevin SDE, when the gradient grows superlinearly?
    \item What are the convergence rates of these schemes?
\end{itemize}
The current article manages to answer these questions sufficiently by providing novel insights for the contraction rates of modified ULMC in this ill-conditioned scenario, but also provide state of the art convergence results.
Our contributions can be summarized as follows:
\begin{enumerate}

\item Monotonicity-preserving taming scheme.

We introduce a novel drift regularization $h_\lambda$ which is both globally Lipschitz and strongly monotone, while converging to the true gradient $\nabla u$ as the stepsize $\lambda \rightarrow 0$. This construction extends taming approaches in the sampling literature and provides new technical tools for handling super-linear gradients in kinetic Langevin dynamics.
\item  Contractivity of underdamped discretizations.

Using a weighted Euclidean norm, we prove that both the tamed stochastic exponential scheme and the tamed OBABO scheme are contractive in an almost sure sense. This establishes, for the first time, explicit contraction rates for underdamped numerical schemes beyond the globally Lipschitz regime.
\item Log Sobolev inequality of the underdamped discretizations.\\
We are able to prove that our discretizations satisfy uniform in the number of iterations Log-Soboelv inequalities. Establishing show the concentration properties of the algorithms and could possibly have other implications for the use of these algorithms in fields such as differential privacy.
\item Non-asymptotic convergence guarantees.

We derive quantitative bounds in the 2 -Wasserstein distance between the law of the algorithms and the target distribution. Our results yield iteration complexity

$$
O\left(\frac{\log (1 / \varepsilon)}{\varepsilon^{2.5}}\right)
$$

to reach an $\varepsilon$-approximation, which improves upon existing bounds for tamed exponential Euler methods and is new for spitting methods in this regime.
\end{enumerate}
\section{Preliminaries}
\subsection{Blanket assumptions}
Let $h:=\nabla u$. The following assumptions will be in force throughout the article.
\begin{assumption}(strong convexity). \label{A1-Ham}
There exists an $m>0$ such that
\[\langle h(x)-h(y),x-y\rangle \geq 2m |x-y|^2 \quad  x,y \in \mathbb{R}^d.\]
\end{assumption}

\begin{assumption}\label{A2-Ham}

(Local Lipschitz continuity). There exist $l>0$ and $L>0$ such that
\[|h(x)-h(y)|\leq L (1+|x|+|y|)^l |x-y| \quad  x,y\in \mathbb{R}^d.\]
\end{assumption}
\begin{assumption}\label{A3-Ham}
There holds
\[\E |\theta_0|^{2k} + \E |V_0|^{2k}<\infty \quad \forall k \in \mathbb{N}.\]
\end{assumption}
\subsection{Discretizations of the Kinetic Langevin SDE with general drift coefficient}
We consider numerical schemes for the kinetic (underdamped) Langevin SDE 
\begin{equation}\label{eq:kineticSDE}
\begin{aligned}
dV_t &= -\gamma V_t\, dt - \nabla u(\theta_t)\, dt + \sqrt{2\gamma} dB_t,\\
d\theta_t &= V_t\, dt,
\end{aligned}
\end{equation}
where $\gamma>0$ is the friction coefficient and $B_t$ is a $d$-dimensional Brownian motion.

In the standard case, the drif coeficient is $b = -\nabla u$. In our setting, we will introduce two schemes with modified drifts $b = h_\lambda$ depending on the step size $\lambda$, which is carefully designed to handle super-linear growth of the gradient.

\subsubsection{Stochastic exponential scheme}
In this section we propose a modified version of the stochastic exponential scheme, an algorithm which was first developed in \cite{cheng2018underdamped} and analysed in depth in \cite{dalalyan2018sampling} and \cite{gao2021global} under the assumption of Lipschitz continuity for the gradient.
The creation of this algorithm is motivated by the fact that  \eqref{u-LSDE}, after applying It\^{o}'s formula to the product $e^{\gamma t}V_t$ and by following standard calculations, can be rewritten as
\begin{equation}\label{eq-underlangtwo}
\begin{aligned}
  {V}_t&=e^{-\gamma t}V_0 - \int_0^t e^{-\gamma(t-s)}\nabla u({\theta}_s)ds+\sqrt{{2\gamma}} \int_0^t e^{-\gamma(t-s)} dB_s\\
   {\theta_t}&=\theta_0+\int_0^t {V}_s ds.
\end{aligned}
\end{equation}
The stochastic exponential scheme is given by
\begin{equation}\label{eq-tKLMC2}
    \begin{aligned}
    \bar{Y}^\lambda_{n+1}&=  \psi_0(\lambda) \bar{Y}^{\lambda}_n-\psi_1(\lambda) \hl(\xn) +\sqrt{2\gamma } \Xi_{n+1}\\
    \bar{x}^\lambda _{n+1}&=\xn+\psi_1(\lambda)v^{\lambda}_n-\psi_2(\lambda)\hl(\xn)+\sqrt{2\gamma } \Xi'_{n+1}
    \end{aligned}
\end{equation}
with initial condition $(\bar{Y}^\lambda_0,\bar{x}^\lambda_0)=(V_0,\theta_0)$
where $\psi_0(t)=e^{-\gamma t}$ and $\psi_{i+1}=\int_0^t \psi_i(s)ds$. More specifically,\begin{equation}\label{eq-psi}
    \begin{aligned}
        \psi_0(\lambda)&=e^{-\gamma \lambda},\\
        \psi_1(\lambda)&=\frac{1}{\gamma}(1-e^{-\gamma \lambda}),\\
        \psi_2(\lambda)&=\frac{\lambda \gamma + e^{-\gamma \lambda} -1}{\gamma^2}.
    \end{aligned}
\end{equation} 
$\left(\boldsymbol{\Xi}_{k+1}, \boldsymbol{\Xi}_{k+1}^{\prime}\right)$ is a $2 d $-dimensional centered Gaussian vector satisfying the following conditions:
\begin{itemize}
    \item
- $\left(\boldsymbol{\Xi}_{j}, \boldsymbol{\Xi}_{j}^{\prime}\right)^{\prime}$ s are iid and independent of the initial condition $\left({V}_{0}, \theta_{0}\right)$,
\item for any fixed $j$, the random vectors $\left(\boldsymbol{\Xi}_{j}\right)_{1},
\left(\boldsymbol{\Xi}_{j}^{\prime}\right)_{1},
\left(\boldsymbol{\Xi}_{j}\right)_{2},
\left(\boldsymbol{\Xi}_{j}^{\prime}\right)_{2},
\ldots,
\left(\boldsymbol{\Xi}_{j}\right)_{d},
\left(\boldsymbol{\Xi}_{j}^{\prime}\right)_{d}$ are iid with covariance matrix
$$
\mathbf{C}=\int_{0}^{\lambda}\left[\psi_{0}(t), \psi_{1}(t)\right]^{\top}\left[\psi_{0}(t) , \psi_{1}(t)\right] d t.
$$
\end{itemize}

\noindent At this point and in view of \eqref{eq-underlangtwo}, one claims that the continuous time interpolation of \eqref{eq-tKLMC2} is given by
\begin{equation}\label{eq-contint}
    \begin{aligned}
    \Tilde{Y}_t&=e^{- \gamma(t-n\lambda)}\Tilde{Y}_{n\lambda}-\int_{n\lambda}^t e^{-\gamma(t-s)} \hl(\tilde{x}_{n\lambda}) ds +\sqrt{2\gamma}\int_{n\lambda}^t e^{-\gamma(t-s)}dW_{s}
    \\\Tilde{x}_t&=\tilde{x}_{n\lambda} +\int_{n\lambda}^t \tilde{Y}_{s} ds,
    \end{aligned}
\end{equation}
where $\{W\}_t$ is a d-dimensional Brownian motion.
which naturally leads to the following Lemma.

\begin{lemma}\label{remarkgridpoints}
Let $\lambda >0$, $(v^{\lambda}_n,\xn)$ be given by \eqref{eq-tKLMC2} and $(\tilde{Y}_{n\lambda},\tilde{x}_{n\lambda})$ be given by \eqref{eq-contint}. Then,
 \[\mathcal{L}(\bar{Y}^{\lambda}_n,\xn)=\mathcal{L}(\tilde{Y}_{n\lambda},\tilde{x}_{n\lambda}), \quad \forall n \in \mathbb{N}.\]
 \end{lemma}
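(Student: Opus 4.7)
The plan is to proceed by induction on $n$, showing that evaluating the continuous-time interpolation \eqref{eq-contint} at the grid point $t=(n+1)\lambda$ produces exactly the recursion \eqref{eq-tKLMC2}, provided the Gaussian noise on the $n$-th interval has the same joint law as $(\Xi_{n+1},\Xi'_{n+1})$. The base $n=0$ holds by the matching initial conditions $(\bar Y^\lambda_0,\bar x^\lambda_0)=(V_0,\theta_0)=(\tilde Y_0,\tilde x_0)$. For the induction step, conditionally on $(\tilde Y_{n\lambda},\tilde x_{n\lambda})$ the process $h_\lambda(\tilde x_{n\lambda})$ is frozen on $[n\lambda,(n+1)\lambda]$, so \eqref{eq-contint} is an explicit affine-in-noise formula.

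Next I would compute the deterministic integrals. Setting $t=(n+1)\lambda$ in the velocity equation, $\int_{n\lambda}^{(n+1)\lambda} e^{-\gamma((n+1)\lambda-s)}\,ds = \psi_1(\lambda)$ and $e^{-\gamma((n+1)\lambda - n\lambda)}=\psi_0(\lambda)$, recovering the velocity recursion. For the position, I substitute the explicit expression for $\tilde Y_s$ into $\tilde x_{(n+1)\lambda}=\tilde x_{n\lambda}+\int_{n\lambda}^{(n+1)\lambda}\tilde Y_s\,ds$: the $\tilde Y_{n\lambda}$ term integrates to $\psi_1(\lambda)\tilde Y_{n\lambda}$, and the drift term contributes $-h_\lambda(\tilde x_{n\lambda})\int_{n\lambda}^{(n+1)\lambda}\psi_1(s-n\lambda)\,ds=-\psi_2(\lambda)h_\lambda(\tilde x_{n\lambda})$ using the identity $\psi_{i+1}(\lambda)=\int_0^\lambda \psi_i(s)\,ds$.

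The main technical point is verifying that the Gaussian pair
\[
\bigl(\widehat\Xi_{n+1},\widehat\Xi'_{n+1}\bigr):=\Bigl(\sqrt{2\gamma}\!\int_{n\lambda}^{(n+1)\lambda}\!e^{-\gamma((n+1)\lambda-s)}dW_s,\ \sqrt{2\gamma}\!\int_{n\lambda}^{(n+1)\lambda}\!\!\!\!\int_{n\lambda}^{s}\!e^{-\gamma(s-u)}dW_u\,ds\Bigr)
\]
has the same law as $(\sqrt{2\gamma}\,\Xi_{n+1},\sqrt{2\gamma}\,\Xi'_{n+1})$. After a stochastic Fubini, the second integral becomes $\int_{n\lambda}^{(n+1)\lambda}\psi_1((n+1)\lambda - u)\,dW_u$, so componentwise $(\widehat\Xi_{n+1},\widehat\Xi'_{n+1})$ is a centred Gaussian vector whose covariance (after the change of variable $t=(n+1)\lambda-u$) equals $\int_0^\lambda [\psi_0(t),\psi_1(t)]^\top[\psi_0(t),\psi_1(t)]\,dt=\mathbf{C}$, matching the definition in the excerpt up to the factor $\sqrt{2\gamma}$ that I absorb consistently. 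Independence across $n$ and independence from $(V_0,\theta_0)$ follow from the independent increments of $W$ on disjoint intervals $[n\lambda,(n+1)\lambda]$ and from $W\indep(V_0,\theta_0)$.

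Combining these three pieces, the continuous-time interpolation at $(n+1)\lambda$ satisfies, conditionally on $(\tilde Y_{n\lambda},\tilde x_{n\lambda})$, exactly the same affine recursion in a Gaussian vector of covariance $\mathbf{C}$ as \eqref{eq-tKLMC2} does conditionally on $(\bar Y^\lambda_n,\bar x^\lambda_n)$. Since the two Markov chains share the same initial distribution and the same transition kernel, the induction closes and $\mathcal{L}(\bar Y^\lambda_n,\bar x^\lambda_n)=\mathcal{L}(\tilde Y_{n\lambda},\tilde x_{n\lambda})$ for every $n\in\mathbb{N}$. I expect the only delicate step to be the covariance computation: one must be careful with the time reversal $t\mapsto\lambda-t$ and with the stochastic Fubini justification, but both are standard for Gaussian Volterra integrals.
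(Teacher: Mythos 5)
Your proposal is correct and follows essentially the same strategy as the paper: expand the continuous-time interpolation at a grid point, identify the deterministic coefficients as $\psi_0,\psi_1,\psi_2$, and verify that the resulting Gaussian pair has the cross-covariance $\mathbf{C}$, then invoke independent Brownian increments to extend from one step to all $n$. The only cosmetic differences are that the paper proves the single step $n=0\to 1$ explicitly (leaving the induction implicit) and rewrites the position noise $\int_0^\lambda e^{-\gamma t}M_t\,dt$ via It\^o's formula rather than stochastic Fubini, but both manipulations produce the same Volterra representation $\int_0^\lambda \psi_1(\lambda-u)\,dW_u$ and the same covariance.
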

 \subsection{OBABO scheme}
 The kinetic Langevin SDE involves two coupled components: a position evolution (driven by velocity) and a velocity evolution (driven by gradient and friction). Direct discretization (Euler or exponential schemes) often leads to stiff systems and degraded contractive properties. Operator splitting, specifically the OBABO sequence, leverages the separable structure of the SDE, alternately updating momentum and position using analytically tractable sub-steps. This approach mirrors successful strategies in other fields, such as Hamiltonian Monte Carlo, for efficiently integrating stiff or high-dimensional dynamics.\\
  Let $\lambda,\gamma>0$ be respectively the stepsize and friction coefficient, and denote $\tilde{\eta} = e^{-\lambda\gamma/2}$. We consider the \nv{time-homogeneous} Markov chain $(x_n,v_n)_{n\in\N}$ on $\R^d\times\R^d$ with transitions given by
\begin{equation}\label{eq:OBABO-chain}
\left\{\begin{array}{rcl}
x_{1} & = & x_0+ \lambda \po \eta v_0 + \sqrt{1-\eta^2} G\pf  - \frac{\lambda^2}2 \hl(x_0)\\
v_1 & = & \eta^2v_0 - \frac{\lambda\tilde{\eta} }{2}\po \hl(x_0) +  \hl(x_1)\pf + \sqrt{1-\eta^2} \po \tilde{\eta} G + G'\pf\,,
\end{array}\right.
\end{equation}
where $G$ and $G'$ are two independent standard (mean $0$ variance $\Id$)  $d$-dimensional Gaussian random variables.
The scheme can be decomposed as the following successive steps:
\begin{align}\label{eq-decomp-OBABO}
v_0'\  & =  \ \tilde{\eta} v_0 + \sqrt{1-\tilde{\eta}^2} G\tag{O}\\
v_{1/2}\ &=  \  v_0' - \frac{\lambda}{2} \hl(x_0) \tag{B}\\
x_1 \ &= \  x_0 + \lambda v_{1/2} \tag{A}\\
v_1'\  & = \ v_{1/2} - \frac{\lambda}{2} \hl(x_1)\tag{B}\\
v_1 \ & = \  \tilde{\eta} v_1' + \sqrt{1-\tilde{\eta}^2} G'\,.\tag{O}
\end{align}

\section{Monotonicity preserving taming scheme}
In order to proceed with our analysis we want to create a drift coefficient that is Lipschitz and strongly monotone, and in order to make our scheme a logical candidate to sample from $\pi$, our drift should converge to $\nabla u$ as $\lambda$ goes to zero.\\
Taming schemes have appeared in numerous works in the sampling literature for example see 
\cite{tula}, \cite{TUSLA}, \cite{lytras2023taming}, \cite{neufeld2022non},\cite{johnston2023kinetic} \cite{pmlr-v258-lytras25a}.\\
The drift coefficient will be designed based on a construction borrowed from the numerics of SDEs literature ( \cite{johnston2024strongly}) , and the preserved monotonicity and Lipschtiz assumption are a major technical contribution compared to these earlier works.
\begin{definition}
We define the following quantities: \[g(x)=h(x)-{m}{x}.\]
 \[\rl=(L+m)(\frac{1}{\sqrt{\lambda}})^{\frac{1}{l+2}}.\]
 \[R_\lambda=\sup_{B(0,\rl)} |g(x)|\leq (L+|h(0)|) \rl^{l+1}\]
We define \[g_\lambda(x)=t(x)g(x)+R_\lambda s(x) x\]
where 
\[\begin{gathered}
t(x):= \begin{cases}1, & \left|x\right| \leq \rl-1 \\
\rl-\left|x\right|, & \rl-1<\left|x\right|<\rl, \\
0, & \left|x\right| \geq \rl\end{cases} \\
\end{gathered}\]\\
\[\begin{gathered}
s(x):= \begin{cases}
0, & |x| \leq \rl-2 \\
|x|-\rl+2, & \rl-2<|x|<\rl-1 \\
1, & \rl-1 \leq|x| \leq r \\
\frac{\rl}{\left|x\right|}, & |x| \geq \rl\end{cases}
\end{gathered}\]
Then the tamed drift coefficient is given by $\hl=g_\lambda +m x$
\end{definition}
\begin{lemma}\label{tamed-prop}
The tamed coefficient has the following properties
\begin{itemize}
\item \[\hl(x)=h(x) \quad \forall x \in B(0,r_\lambda-2)\]
    \item \[|\hl(x)|\leq \frac{2}{\sqrt{\lambda}}(1+|x|) \quad \forall x \in \mathbb{R}^d\]
    \item \[|\hl(x)-\hl(y)|\leq \frac{1}{\sqrt{\lambda}}|x-y|\quad \forall x,y \in \mathbb{R}^d\]
    \item \[\langle \hl(x)-\hl(y),x-y\rangle \geq m |x-y|^2 \quad \forall x,y \in \mathbb{R}^d \]
\end{itemize}
\end{lemma}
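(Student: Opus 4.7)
The plan is to handle the four claims in order, working shell-by-shell across the three spheres $\{|x| = r_\lambda - 2, r_\lambda - 1, r_\lambda\}$ that cut space into four regions on which the cutoffs $t, s$ take distinct analytic expressions. The first bullet is immediate from the definitions: on $B(0, r_\lambda - 2)$ we have $t(x) = 1$ and $s(x) = 0$, so $g_\lambda(x) = g(x)$ and $\hl(x) = g(x) + mx = h(x)$. For the size estimate, I would argue case-by-case, using $|t(x) g(x)| \leq |g(x)|$ and the supremum bound $|g(x)| \leq R_\lambda$ on $B(0, r_\lambda)$, together with $|R_\lambda s(x) x| \leq R_\lambda(|x| \wedge r_\lambda)$ (the product $s(x) x$ is radially capped at $r_\lambda$ outside $B(0, r_\lambda)$). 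The arithmetic reduces to $R_\lambda \cdot r_\lambda \leq (L + |h(0)|)(L+m)^{l+2} \lambda^{-1/2}$, obtained by plugging $r_\lambda = (L+m)\lambda^{-1/(2(l+2))}$ into $R_\lambda \leq (L + |h(0)|) r_\lambda^{l+1}$; the problem-dependent constant is absorbed into the claimed factor $2$ by adjusting the proportionality in the definition of $r_\lambda$ (equivalently, by assuming $\lambda$ sufficiently small).

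The heart of the argument is the monotonicity (fourth bullet). Since $g_\lambda$ is Lipschitz and smooth off the three threshold spheres (which have measure zero), it suffices to show that the symmetric part of $\nabla g_\lambda$ is positive semidefinite at every point of differentiability; then $\langle g_\lambda(x) - g_\lambda(y), x-y\rangle \geq 0$ follows by integrating along the segment from $y$ to $x$. In each region the Jacobian splits as
$$
\nabla g_\lambda(x) \;=\; t(|x|)\,\nabla g(x) \;+\; t'(|x|)\, g(x) \hat x^\top \;+\; R_\lambda\,\nabla\bigl(s(|x|)\,x\bigr),
$$
with $\hat x = x/|x|$. The first summand has symmetric part $\succeq m\,t(|x|)\,I$ by Assumption \ref{A1-Ham}. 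Writing $\phi(r) = r s(r)$, the third summand equals $R_\lambda\bigl[\phi'(|x|)\hat x \hat x^\top + (\phi(|x|)/|x|)(I - \hat x \hat x^\top)\bigr]$, which is symmetric and PSD because a one-line region-wise check shows $\phi$ is nondecreasing and non-negative. The dangerous middle summand $t'(|x|) g(x) \hat x^\top$ is only nonzero on the shell $r_\lambda - 1 < |x| < r_\lambda$, where $t' = -1$; its symmetrization $-\tfrac{1}{2}(g \hat x^\top + \hat x g^\top)$ has minimum eigenvalue at least $-|g(x)|$, and this is absorbed by the $R_\lambda I$ piece of $\nabla(s(|x|)x)$ that is switched on in this very shell (since $s = 1, s' = 0$ there) precisely because $R_\lambda \geq |g(x)|$ on $B(0, r_\lambda)$ by construction of $R_\lambda$. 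This cancellation in the outer transition shell is the main obstacle, and is exactly why $R_\lambda$ is set equal to $\sup_{B(0, r_\lambda)} |g|$. Adding the linear $mx$ term then upgrades $\geq 0$ to $\geq m|x-y|^2$.

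For the Lipschitz bound (third bullet), the same region-wise inspection of $\nabla g_\lambda$ suffices: the factor $t(|x|)\,\nabla g$ contributes at most $L(1 + 2 r_\lambda)^l + m$ by Assumption \ref{A2-Ham} (valid because $t$ is supported in $B(0, r_\lambda)$), while the $R_\lambda$-terms contribute at most $R_\lambda(1 + r_\lambda)$. Both bounds are of order $r_\lambda^{l+2} = O(\lambda^{-1/2})$ by the choice of $r_\lambda$, so the operator norm of $\nabla g_\lambda$ is uniformly $O(\lambda^{-1/2})$. Integrating along line segments yields the Lipschitz bound with constant $\lambda^{-1/2}$ once the hidden constant is absorbed as in item two; the $mx$ correction only shifts the Lipschitz constant by $m$, absorbed analogously.
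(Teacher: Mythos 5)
Your overall strategy is sound and in fact proves the same statements, but by a genuinely different route from the paper for bullets~3 and~4. The paper does not differentiate anything: it works directly with the quantity $\langle g_\lambda(x)-g_\lambda(y),x-y\rangle$ (resp.\ $|g_\lambda(x)-g_\lambda(y)|$) on each region $A_1,\dots,A_4$, using only the algebraic structure of $t$, $s$ and the \emph{given} monotonicity and local-Lipschitzness of $g$, and then stitches the regions together along the line segment $[x,y]$. You instead compute $\nabla g_\lambda$ region-by-region, bound its symmetric part from below (resp.\ its operator norm from above), and integrate along segments. The key mechanism you isolate --- the $R_\lambda I_d$ block produced by $\nabla(s(|x|)x)$ in the shell $r_\lambda-1<|x|<r_\lambda$ absorbing the rank-two perturbation $t'(|x|)\,\mathrm{sym}(g\hat x^\top)$ with minimum eigenvalue $\geq -|g(x)|\geq -R_\lambda$ --- is exactly the same cancellation that the paper performs at the level of inner products ($R_\lambda|x-y|^2$ absorbing $(t(x)-t(y))\langle g(x),x-y\rangle$). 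Both identify $R_\lambda=\sup_{B(0,r_\lambda)}|g|$ as the precise threshold that makes the truncation monotonicity-preserving, so the insight is the same; yours makes the eigenvalue balance more transparent, the paper's is more elementary.

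One point you should be more careful about: Assumption~\ref{A2-Ham} only gives $h=\nabla u$ (and hence $g$) locally Lipschitz, not $C^1$, so $\nabla g$ exists only Lebesgue-a.e. Passing from ``$\mathrm{sym}(\nabla g_\lambda)\succeq 0$ a.e.'' to monotonicity requires either a mollification/approximation step or a Fubini argument to ensure that, for a.e.\ pair $(x,y)$, the restriction of $g_\lambda$ to the segment $[x,y]$ is differentiable a.e.\ with derivative given by $\nabla g_\lambda$; one then extends to all $(x,y)$ by continuity. The paper's algebraic route avoids this issue entirely (it never needs $\nabla g$ to exist), which is the main thing its approach buys you. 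Also note that, like the paper, you are absorbing the constant $(L+|h(0)|)(L+m)^{l+2}$ hidden in $R_\lambda r_\lambda\sim r_\lambda^{l+2}=(L+m)^{l+2}\lambda^{-1/2}$ into the claimed prefactors $1/\sqrt\lambda$ and $2/\sqrt\lambda$; you flag this explicitly, and the paper is silent about it, but strictly speaking neither argument produces those exact numerical constants without rescaling the definition of $r_\lambda$. Finally, bullet~2 is not actually proved in the paper's appendix, so your sketch fills a genuine gap, and the bound $|s(x)x|\leq |x|\wedge r_\lambda$ you use is correct.
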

\begin{proof}
    See Appendix
    \end{proof}
\section{Main results}
In order to prove a contraction for the scheme, we shall use a modified norm which is inspired by standard Lyapunov structures for the underdamped Langevin SDE.
\begin{definition}
   For $z=(x, v) \in \mathbb{R}^{2 d}$ we introduce the weighted Euclidean norm

$$
|z|_{a, b}^2=|x|^2+2 b\langle x, v\rangle+a|v|^2
$$

for $a, b>0$, which is equivalent to the Euclidean norm on $\mathbb{R}^{2 d}$ as long as $b^2<a$. Under the condition $b^2<a / 4$, we have

\begin{equation}\label{eq-equivdist}
    \frac{1}{2}\min\{1,a\}|z|^2 \leq|z|_{a, b}^2 \leq \frac{3}{2}\max\{1,a\}|z|^2
\end{equation}

\end{definition}
Let $a=\frac{1}{M_\lambda}$ where $M_\lambda=\mathcal{O}(\lambda^{-\frac{1}{2}})$ is the Lipschitz constant of $\hl$ and $b=\frac{1}{\gamma}$. 
For $\gamma\geq \frac{5}{\lambda^{1/4}}$ and $\lambda \leq \frac{1}{2\gamma}$.
\begin{theorem}
           Let $\gamma\geq 5\sqrt{M_{\lambda}}= \frac{5}{\lambda^\frac{1}{4}}$ and $\lambda\leq \frac{1}{2\gamma}$.
     \\
     Let $z_n=(x_n,v_n)$ the $n-th$ iterate of the stochastic exponential tamed scheme with initial condition $z_0$ and $\tilde{z}_n=(\tilde{x}_n,\tilde{v}_n)$ the n-th iterate of the same scheme with initial condition $\tilde{z}_0$.
     Then, 
     \[||z_{n+1}-\tilde{z}_{n+1}||^2_{a,b}\leq (1-f(\lambda))|z_n-\tilde{z}_n||^2_{a,b} \]
     where $f(\lambda)=\frac{m\lambda}{4\gamma}.$
     \end{theorem}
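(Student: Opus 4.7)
The plan is to use a synchronous coupling, running both trajectories with the same Gaussian increments $(\Xi_{n+1},\Xi_{n+1}')$ so that the noise cancels exactly in the difference. Writing $\Delta z_n := z_n - \tilde z_n$ and $D_n := h_\lambda(x_n) - h_\lambda(\tilde x_n)$, the recursion \eqref{eq-tKLMC2} yields the pathwise identities
\[
\Delta v_{n+1} = \psi_0\Delta v_n - \psi_1 D_n, \qquad \Delta x_{n+1} = \Delta x_n + \psi_1\Delta v_n - \psi_2 D_n,
\]
so the bound will automatically hold almost surely. What remains is a deterministic quadratic-form computation: expand $|\Delta z_{n+1}|^2_{a,b}$ in the variables $(\Delta x_n,\Delta v_n,D_n)$ and convert the cross-terms involving $D_n$ into $|\Delta x_n|^2$-contributions by invoking the strong monotonicity $\langle D_n,\Delta x_n\rangle \ge m|\Delta x_n|^2$ and the Lipschitz bound $|D_n| \le M_\lambda|\Delta x_n|$ of the tamed drift.

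The crucial observation is that the two elementary identities $\psi_0 + \gamma\psi_1 = 1$ and $\psi_1 + \gamma\psi_2 = \lambda$, together with the choice $b = 1/\gamma$, produce two cancellations in $|\Delta z_{n+1}|^2_{a,b} - |\Delta z_n|^2_{a,b}$: the coefficient of $\langle \Delta x_n,\Delta v_n\rangle$ vanishes identically, and the coefficient of $\langle \Delta x_n, D_n\rangle$ collapses to $-2\lambda/\gamma$. Strong monotonicity then turns the latter into the contractive term $-\,\tfrac{2m\lambda}{\gamma}|\Delta x_n|^2$, which is the engine of the rate. The residual is of the form $C_V|\Delta v_n|^2 + C_D|D_n|^2 - 2\alpha\langle \Delta v_n,D_n\rangle$ with
\[
C_V = \psi_1^2 + 2b\psi_0\psi_1 - a\gamma\psi_1(1+\psi_0),\quad C_D = \psi_2^2 + 2b\psi_1\psi_2 + a\psi_1^2,\quad \alpha = \psi_1\psi_2 + a\psi_0\psi_1 + b(\psi_1^2+\psi_0\psi_2);
\]
under $\gamma \ge 5/\lambda^{1/4}$ the dominant term $-a\gamma\psi_1(1+\psi_0)$ in $C_V$ is of order $\lambda^{5/4}$ and negative.

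Splitting the mixed term by Young's inequality with a multiplier of order $\lambda^{-1/4}$ absorbs the $|\Delta v_n|^2$ piece into $|C_V|$ and, after using $|D_n|^2 \le |\Delta x_n|^2/\lambda$, makes the residual $|\Delta x_n|^2$-contribution of order $\lambda^{3/2}$, which is strictly smaller than the monotonicity surplus that remains after reserving $\tfrac{m\lambda}{4\gamma}|\Delta x_n|^2$ to absorb the $f|\Delta x_n|^2$ term on the RHS. Converting $|\Delta x_n|^2$ and $|\Delta v_n|^2$ back to $|\Delta z_n|^2_{a,b}$ via the equivalence \eqref{eq-equivdist}, which is applicable since $b^2 = 1/\gamma^2 \le \sqrt\lambda/25 < a/4$ under our assumptions, yields the claimed contraction rate $f(\lambda) = m\lambda/(4\gamma)$.

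The main obstacle is the intrinsic tension between the Lipschitz constant $M_\lambda = 1/\sqrt\lambda$, which diverges as $\lambda \to 0$, and the fixed monotonicity $m$. The scaling $a = 1/M_\lambda = \sqrt\lambda$ is dictated precisely by the need to make $a\psi_1^2/\lambda \sim \lambda^{3/2}$ smaller than the dissipation budget $m\lambda/\gamma$; simultaneously the choice $b = 1/\gamma$ is forced by the two cancellations described above. The most laborious step is therefore not the structure of the argument, which is essentially fixed by these two scalings, but the careful bookkeeping of the Young-type multipliers needed to verify that the $|\Delta v_n|^2$ and $|\Delta x_n|^2$ budgets are both respected throughout the permissible range of $\gamma$ and $\lambda$.
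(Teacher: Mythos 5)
Your decomposition is correct as far as it goes: writing $\Delta v_{n+1}=\psi_0\Delta v_n-\psi_1 D_n$, $\Delta x_{n+1}=\Delta x_n+\psi_1\Delta v_n-\psi_2 D_n$ and expanding, the identities $\psi_0+\gamma\psi_1=1$, $\psi_1+\gamma\psi_2=\lambda$ with $b=1/\gamma$ do make the $\langle\Delta x_n,\Delta v_n\rangle$ coefficient vanish and collapse the $\langle\Delta x_n,D_n\rangle$ coefficient to $-2\lambda/\gamma$, and your formulas for $C_V$, $C_D$, $\alpha$ are right. The gap is in the Young's inequality step. With $a=1/M_\lambda$ the dominant piece of $\alpha$ is $a\psi_0\psi_1\sim\lambda^{3/2}$, and $|C_V|\lesssim a\gamma\psi_1(1+\psi_0)\sim\gamma\lambda^{3/2}$. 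Splitting $-2\alpha\langle\Delta v_n,D_n\rangle\le\alpha\epsilon|\Delta v_n|^2+\frac{\alpha}{\epsilon}|D_n|^2$ and absorbing the first term into $|C_V|$ forces $\epsilon\lesssim|C_V|/\alpha\sim\gamma$, so after $|D_n|^2\le M_\lambda^2|\Delta x_n|^2$ the residual $|\Delta x_n|^2$-coefficient is at least $\frac{\alpha}{\epsilon}M_\lambda^2\gtrsim\frac{\lambda^{3/2}}{\gamma}\cdot\frac{1}{\lambda}=\frac{\sqrt\lambda}{\gamma}$. With your stated multiplier $\epsilon\sim\lambda^{-1/4}$ the residual is of order $\lambda^{3/4}$, not $\lambda^{3/2}$ as you wrote (you appear to have accounted only for $C_D M_\lambda^2\sim\lambda^{3/2}$ and dropped the $\alpha/\epsilon$ piece). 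Either way, the dissipation budget from strong monotonicity is $\frac{2m\lambda}{\gamma}|\Delta x_n|^2$, and
\[
\frac{\sqrt\lambda}{\gamma}\ \gg\ \frac{m\lambda}{\gamma}\qquad\text{as }\lambda\to0,
\]
so the residual overwhelms the dissipation. No choice of $\epsilon$ rescues a scalar Young split once you have replaced $D_n$ by the crude bound $|D_n|\le M_\lambda|\Delta x_n|$: that replacement is where the factor $1/\sqrt\lambda$ is lost irretrievably.

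The missing idea is to keep the matrix relation $D_n=Q\Delta x_n$ with $mI_d\preceq Q\preceq M_\lambda I_d$, which the paper exploits through the block form of $\mathcal{H}=(1-f)G-P^TGP$ and the Schur complement criterion: since $A,B,C$ are polynomials in $Q$, positivity of $\mathcal H$ reduces to a scalar polynomial inequality evaluated at the eigenvalues of $Q$, and this inequality is checked exactly, with no lossy Cauchy--Schwarz. The equivalent fix inside your scalar expansion would be to split the cross term in the $Q$-weighted inner product, $2\langle\Delta v_n,Q\Delta x_n\rangle\le\theta\langle\Delta v_n,Q\Delta v_n\rangle+\theta^{-1}\langle\Delta x_n,Q\Delta x_n\rangle$ (this is precisely what the paper does in the OBABO contraction proof). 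The second term is then $\theta^{-1}\langle D_n,\Delta x_n\rangle$, the \emph{same object} as the dissipation, so it only rescales the coefficient of $\langle D_n,\Delta x_n\rangle$ rather than producing a fresh $|\Delta x_n|^2$ term carrying $M_\lambda^2$. With $\theta\sim\gamma/M_\lambda$ the $\langle\Delta v_n,Q\Delta v_n\rangle\le M_\lambda|\Delta v_n|^2$ part fits into $|C_V|$, and the coefficient of $\langle D_n,\Delta x_n\rangle$ remains negative of order $\lambda/\gamma$; the argument then closes. As written, though, your proposal does not establish the contraction.
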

     \begin{theorem}
    Let $M(x,v)=(x,x+\frac{2}{\gamma}v).$ Assume that for the initial conditions, $\mathcal{L}(M(\theta_0,v_0))$ satisfy a Log-Sobolev inequality with constant $C_{LSI,0}.$ Then $\mathcal{L}(M(\theta_n,v_n)$ satisfies an LSI with constant
    \[C_{LSI,n}\leq (1-\frac{c_0m}{\gamma}\lambda)^n C_{LSI,0} + \frac{1}{c_0m}+\mathcal{O}(\lambda \gamma)\]
\end{theorem}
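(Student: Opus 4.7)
The plan is to derive a one-step LSI update and then iterate. This rests on two classical stability properties of the log-Sobolev inequality: an $L$-Lipschitz pushforward scales the constant by $L^{2}$, and convolution with a centered Gaussian increases the constant by at most the operator norm of its covariance. To apply these, I express one iteration of the stochastic exponential scheme, written in the $M$-coordinates $w_{n}:=M(\theta_{n},v_{n})$, as a deterministic map plus an additive Gaussian increment. Letting $F$ denote the deterministic part of \eqref{eq-tKLMC2},
\[
w_{n+1}\;=\;\tilde F(w_{n})\;+\;\sqrt{2\gamma}\,M\bigl(\Xi_{n+1},\,\Xi'_{n+1}\bigr),\qquad \tilde F:=M\circ F\circ M^{-1}.
\]
The map $M$ is tailor-made so that $|M(z)|^{2}=2|z|_{2/\gamma^{2},\,1/\gamma}^{2}$, i.e. the Euclidean norm in $w$-coordinates is (twice) a weighted quadratic form of the type appearing in Theorem 1.

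The second step is to use Theorem 1 to bound the Euclidean Lipschitz constant of $\tilde F$. Feeding two copies of the scheme with the same noise and invoking the almost-sure $\|\cdot\|_{1/M_{\lambda},\,1/\gamma}$-contraction of $F$ with rate $1-f(\lambda)=1-m\lambda/(4\gamma)$, and then re-running that same computation with the lighter weight $a=2/\gamma^{2}$ in place of $a=1/M_{\lambda}$ (which the condition $\gamma\ge 5\sqrt{M_{\lambda}}$ makes admissible since $2/\gamma^{2}\le 1/(12\,M_{\lambda})$ and $b^{2}<a/4$ still holds), yields $\tilde F$ Euclidean-Lipschitz with squared constant $L_{\lambda}^{2}\le 1-c_{0}m\lambda/\gamma$ for a universal $c_{0}>0$. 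A block-by-block computation of $\Sigma_{\lambda}:=2\gamma\,M\,\mathbf{C}\,M^{\top}$, using $\int_{0}^{\lambda}\psi_{0}^{2}=\mathcal{O}(\lambda)$, $\int_{0}^{\lambda}\psi_{0}\psi_{1}=\mathcal{O}(\lambda^{2})$, and $\int_{0}^{\lambda}\psi_{1}^{2}=\mathcal{O}(\lambda^{3})$, shows that $\|\Sigma_{\lambda}\|_{\mathrm{op}}=\mathcal{O}(\lambda/\gamma)$, with the dominant contribution coming from the $w_{2}$-block carrying the velocity noise scaled by $2/\gamma$. Combining these two bounds with the stability lemmas gives the one-step recursion
\[
C_{\mathrm{LSI},\,n+1}\;\le\;L_{\lambda}^{2}\,C_{\mathrm{LSI},\,n}+\|\Sigma_{\lambda}\|_{\mathrm{op}}\;\le\;\bigl(1-c_{0}m\lambda/\gamma\bigr)C_{\mathrm{LSI},\,n}+\mathcal{O}(\lambda/\gamma),
\]
which, once unrolled, produces the stated decay $(1-c_{0}m\lambda/\gamma)^{n}C_{\mathrm{LSI},\,0}$ and fixed point $\beta/\alpha=1/(c_{0}m)$; the residual $\mathcal{O}(\lambda\gamma)$ absorbs the lower-order corrections (notably the position-block variance $\mathcal{O}(\gamma\lambda^{3})$ and the slack from the norm-weight adjustment).

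The principal obstacle is the Lipschitz step. A brute-force norm-equivalence argument between $\|\cdot\|_{1/M_{\lambda},\,1/\gamma}$ and $|M\cdot|$ would degrade the contraction rate from the desired $\Theta(m\lambda/\gamma)$ to $\Theta(m\lambda^{2})$, which is too weak to yield a nontrivial fixed point. Avoiding this requires either re-deriving the contraction computation of Theorem 1 with the weight pair $(a,b)=(2/\gamma^{2},1/\gamma)$, or carefully isolating the dissipative terms proportional to $|v|^{2}$ (which govern $\tilde F$ in the $w_{2}$-direction) from those proportional to $|x|^{2}$ and to $\langle x,v\rangle$ (which govern it in the $w_{1}$-direction and the cross term). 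The other ingredient, namely the Gaussian-convolution stability of LSI in the anisotropic setting, is standard and enters only through its operator-norm bound on $\Sigma_{\lambda}$.
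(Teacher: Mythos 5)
Your proposal follows the same overall plan as the paper: conjugate the scheme by $M$, write the update in $w$-coordinates as a deterministic map $\bar F = M\circ F\circ M^{-1}$ plus an independent Gaussian increment, bound the Lipschitz constant of $\bar F$ and the operator norm of the Gaussian's covariance, and then iterate the two standard LSI stability facts (Lipschitz pushforward scales the constant by the squared Lipschitz constant; Gaussian convolution adds the covariance's operator norm). Your fixed-point bookkeeping at the end also matches the paper's.

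The one place you diverge is the Lipschitz step, and the divergence is in presentation rather than substance. You propose to re-run the positive-definiteness argument $\mathcal{H} = (1-f)G - P^\top G P \succ 0$ of Theorem~1 with $G$ built from $(a,b) = (2/\gamma^2, 1/\gamma)$. The paper instead computes $\nabla\bar F(\phi,\psi)$ explicitly, splits it as $A + B$ with $\|B\|_{\mathrm{op}} = O(\lambda^2 M_\lambda)$, and diagonalizes $A^\top A$ to get $\|A\|_{\mathrm{op}} \le \sqrt{1-(1+\eta)bm}$, hence $\|\nabla\bar F\| \le 1 - \tfrac{c_0 m}{4}\gamma^{-1}\lambda$. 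These are the same eigenvalue calculation: working directly in $w$-coordinates with the Euclidean norm is equivalent to working in $z$-coordinates with the weighted norm, since $|Mz|^2 = 2|z|^2_{2/\gamma^2,1/\gamma}$ exactly. One small error in your write-up: you claim $b^2 < a/4$ still holds for the new weight pair, but with $a = 2/\gamma^2$, $b=1/\gamma$ one has $b^2 = a/2 > a/4$; only the weaker $b^2 < a$ holds. This does not sink your argument precisely because the identity $|Mz|^2 = 2|z|^2_{a,b}$ is exact and you never actually need \eqref{eq-equivdist} — but it does mean the slack the original contraction proof enjoys from $a = 1/M_\lambda$ (which sits well inside $b^2 < a/4$) is not automatically available, and the re-derivation you allude to would have to be checked afresh. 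You correctly diagnose that this re-derivation cannot be replaced by norm equivalence and is the crux of the proof; as it stands, the proposal names that computation but leaves it unwritten, which is exactly where the paper spends its effort.
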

\begin{theorem}
    Let $(\theta_n,V_n)$ the $n$-th iterate of the stocahstic exponential tamed scheme with stepsize $\lambda$. Then, there holds
    \[W_2(\mathcal{L}(\theta_n),\pi)\leq W_2(\mathcal{L}(\theta_n,V_n),\Pi)\leq 2 \sqrt{M_\lambda} (1-f(\lambda))^{n/2}  W_{2}(\mathcal{L}(\theta_0,V_0), \Pi)+ C\lambda^2 /f(\lambda) \]
    where $C$ depends polynomially on the dimension.
    As a result, to reach $\epsilon$ tolerance, one needs $\mathcal{O}(\frac{\log(\epsilon)}{\epsilon^{2.5}})$ iterations.
\end{theorem}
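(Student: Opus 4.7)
The first inequality is immediate, since marginalization never increases the 2-Wasserstein distance. For the second, my plan is to combine the almost-sure contraction of Theorem 1 with a synchronous coupling to a reference process that is exactly $\Pi$-distributed. Let $(\theta^{\star}_t,V^{\star}_t)$ solve \eqref{u-LSDE} started from $(\theta^{\star},V^{\star})\sim\Pi$, so that $\mathcal{L}(\theta^{\star}_{n\lambda},V^{\star}_{n\lambda})=\Pi$ for every $n$; I then run the tamed scheme \eqref{eq-tKLMC2} once from $(\theta_0,V_0)$ and once from $(\theta^{\star},V^{\star})$, both coupled through identical Gaussian pairs $(\Xi_k,\Xi_k')$. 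A triangle inequality splits the quantity of interest into a transient piece, to which Theorem 1 applies directly, and a bias piece comparing the scheme started from $\Pi$ with the exact diffusion started from $\Pi$. Passing from the weighted norm $|\cdot|_{a,b}$ with $a=1/M_\lambda$, $b=1/\gamma$ back to the Euclidean norm via \eqref{eq-equivdist} produces exactly the $2\sqrt{M_\lambda}$ prefactor in front of the $(1-f(\lambda))^{n/2}W_2(\mathcal{L}(\theta_0,V_0),\Pi)$ that is naturally read on the right-hand side.

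For the bias piece, I would exploit the continuous-time interpolation \eqref{eq-contint}, whose grid-point laws coincide with those of the scheme by Lemma \ref{remarkgridpoints}, together with the Duhamel form \eqref{eq-underlangtwo} of the exact SDE; coupling them through the same Brownian motion and subtracting yields integrals of the pointwise drift mismatch $h_\lambda(\tilde x_{\lfloor s/\lambda\rfloor\lambda})-h(\theta^{\star}_s)$ and of its primitive. I would split this mismatch into a \emph{taming error} $h_\lambda-h$, supported outside $B(0,r_\lambda-2)$ and therefore controlled by polynomial tail estimates on both $\Pi$ and the scheme's iterates (combining \aref{A2-Ham}, \aref{A3-Ham}, and Lemma 2), and a \emph{freezing error} $h_\lambda(\tilde x_{\lfloor s/\lambda\rfloor\lambda})-h_\lambda(\theta^{\star}_s)$, estimated through the Lipschitz constant $M_\lambda$ of $h_\lambda$ together with a standard one-step $L^2$ displacement bound of order $\lambda$. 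Propagating the resulting per-step errors by a discrete Gronwall-type argument with rate $f(\lambda)$ yields the stated $C\lambda^2/f(\lambda)$ bias, with $C$ polynomial in $d$ through the moment bounds. The iteration complexity then follows from the usual balance: I would pick $\lambda$ so that $C\lambda^2/f(\lambda)\le\varepsilon/2$, and then $n\gtrsim f(\lambda)^{-1}\log(\sqrt{M_\lambda}\,W_2(\mathcal{L}(\theta_0,V_0),\Pi)/\varepsilon)$ would force the transient piece to be at most $\varepsilon/2$; combined with the forced choice $\gamma\asymp\lambda^{-1/4}$, this produces the announced $\mathcal{O}(\log(1/\varepsilon)/\varepsilon^{2.5})$ bound.

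The main obstacle is the bias step. Because $M_\lambda\to\infty$ as $\lambda\to 0$, a naive Lipschitz-only estimate on $h_\lambda$ would destroy the rate; reaching a genuine second-order scaling in $\lambda$ (rather than first-order) requires the strong monotonicity half of Lemma 2 to absorb the linear terms in the pointwise difference, a second-order Taylor expansion of $\psi_0,\psi_1,\psi_2$ around zero to exploit cancellations between the integrals against $e^{-\gamma(t-s)}$ and their primitives, and moment bounds on the iterates of \eqref{eq-tKLMC2} that stay polynomial in $d$ and uniform in $n$ --- the last being precisely the reason the blanket Assumption \aref{A3-Ham} must hold for every power $2k$.
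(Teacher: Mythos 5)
Your proposal follows essentially the same route as the paper: an a.s.\ contraction (the paper's Theorem on the tamed exponential scheme) applied in the weighted norm $|\cdot|_{a,b}$, a one-step bias bound for the scheme initialized at $\Pi$ obtained by coupling the continuous interpolation \eqref{eq-contint} to the Duhamel form \eqref{eq-underlangtwo} via the same Brownian motion and decomposing the drift mismatch into a taming error and a displacement/freezing error, and then a geometric accumulation of per-step errors to produce $C\lambda^2/f(\lambda)$; the $\sqrt{M_\lambda}$ prefactor and the complexity $\mathcal O(\varepsilon^{-2.5}\log(1/\varepsilon))$ are read off exactly as you describe.

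The one point where your mental model diverges from the paper's actual bias argument is the role of strong monotonicity. The paper does \emph{not} use the monotonicity of $h_\lambda$ to absorb linear terms in the one-step bias; it sidesteps the $M_\lambda\to\infty$ issue by replacing $h_\lambda$ with $h$ on the region where they agree and invoking the \emph{local} Lipschitz bound of Assumption \aref{A2-Ham}, paired with stepsize-uniform polynomial moment bounds on the iterates (Lemmas on $\E|\z_t|^{2p}$, $\E|\x_t-\x_{n\lambda}|^{2,4}$ and the taming error $\E|h_\lambda-h|^2\lesssim\lambda^2$); the $(1+|x|+|y|)^l$ factor is controlled by Cauchy--Schwarz against the moments, and the scheme-vs-SDE divergence is closed by Gr\"onwall. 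Strong monotonicity is essential in the contraction step, but not in the bias step. Your second-order $\psi_i$ Taylor cancellations are likewise not what the paper relies on --- the $\lambda^2$ order comes from one integral against the $O(\lambda)$ one-step displacement. These are secondary points, however, and the overall decomposition and complexity computation are the paper's.
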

\begin{theorem}
   Let \nv{$\gamma \geqslant 2\sqrt{M_{\lambda}}$ and that $\lambda \leqslant m/(33\gamma^3)$. Set 
\[a\ =\ 1/M_{\lambda}\,,\qquad  b\ =\ 1/\gamma\,, \qquad \kappa \ = \  m/(3\gamma)\,.\]
}
Then, $b^2 \leqslant a/4$ and 
 for all $(x_0,v_0),(y_0,w_0)\in \R^{2d}$, if $(x_n,v_n)$ and $(y_n,w_n)$ is the chain in \eqref{eq:OBABO-chain} started at $(x_0,v_0)$ and $(y_0,w_0)$ respectively then, almost surely, for all $n\in\N$,
\[\|(x_n,v_n)-(y_n,w_n)\|_{a,b}^2 \ \leqslant \ \po 1 - \lambda \kappa \pf^n \|(x_0,v_0)-(y_0,w_0)\|_{a,b}^2\,.\]
\end{theorem}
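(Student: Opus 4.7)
The plan is to set up a synchronous coupling of the two chains, driving them with identical Gaussian samples $G$ and $G'$ at every step. Writing $\Delta x_n := x_n - y_n$, $\Delta v_n := v_n - w_n$ and $\Delta h_i := h_\lambda(x_i) - h_\lambda(y_i)$, the noise then cancels and the one-step differences satisfy the deterministic recursion
\begin{align*}
\Delta x_1 &= \Delta x_0 + \lambda\tilde\eta\,\Delta v_0 - \tfrac{\lambda^2}{2}\Delta h_0,\\
\Delta v_1 &= \tilde\eta^2\,\Delta v_0 - \tfrac{\lambda\tilde\eta}{2}\bigl(\Delta h_0 + \Delta h_1\bigr).
\end{align*}
The theorem then reduces to the almost sure one-step inequality $\|(\Delta x_1,\Delta v_1)\|_{a,b}^2 \leq (1-\lambda\kappa)\|(\Delta x_0,\Delta v_0)\|_{a,b}^2$, which iterates to the geometric bound; the condition $b^2 \leq a/4$ needed for equivalence with the Euclidean norm follows immediately from $\gamma \geq 2\sqrt{M_\lambda}$.

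I would prove the one-step bound by expanding $|\Delta x_1|^2 + 2b\langle \Delta x_1,\Delta v_1\rangle + a|\Delta v_1|^2$ term by term and isolating three sources of dissipation. First, $|\Delta x_1|^2$ produces the cross term $-\lambda^2\langle \Delta x_0,\Delta h_0\rangle \leq -\lambda^2 m\,|\Delta x_0|^2$ by strong monotonicity of $h_\lambda$. Second, the expansion of $2b\langle \Delta x_1,\Delta v_1\rangle$ contains $-b\lambda\tilde\eta\langle \Delta x_1,\Delta h_1\rangle \leq -b\lambda\tilde\eta m\,|\Delta x_1|^2$ and, after substituting the formula for $\Delta x_1$ into $-b\lambda\tilde\eta\langle \Delta x_1,\Delta h_0\rangle$, an analogous $-b\lambda\tilde\eta m\,|\Delta x_0|^2$ plus lower order corrections. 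Third, the leading coefficient $a\tilde\eta^4 = a(1-\mathcal{O}(\lambda\gamma))$ in $a|\Delta v_1|^2$ contracts $|\Delta v_0|^2$ via the two O-steps. All remaining quadratic terms (involving $|\Delta h_i|^2$, $\langle \Delta v_0,\Delta h_i\rangle$, and cross products) are dispatched by Young's inequality and the Lipschitz bound $|\Delta h_i| \leq M_\lambda|\Delta x_i|$; the $\Delta h_1$-dependent errors are in turn reduced to $|\Delta x_0|^2,|\Delta v_0|^2$ via $|\Delta x_1|^2 \leq 3|\Delta x_0|^2 + 3\lambda^2\tilde\eta^2|\Delta v_0|^2 + \tfrac{3\lambda^4}{4}M_\lambda^2|\Delta x_0|^2$.

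Finally, the dissipation in $|\Delta x_0|^2$ (of order $\lambda m/\gamma$) and the dissipation in $a|\Delta v_0|^2$ (of order $\lambda\gamma$) must be balanced against the full weighted norm, and I would bridge them through the cross term $2b\langle \Delta x_0,\Delta v_0\rangle$ using a Young-type inequality. This hypocoercive trade-off is the reason the rate ends up as $\kappa = m/(3\gamma)$ rather than $m$ itself. The main obstacle is the bookkeeping of constants: every error term must be dominated by the gain $\lambda\kappa\,\|(\Delta x_0,\Delta v_0)\|_{a,b}^2$, and the numerical constants $2$ and $33$ in the hypotheses $\gamma\geq 2\sqrt{M_\lambda}$ and $\lambda\leq m/(33\gamma^3)$ are precisely tuned so that the worst contributions---scaling at most like $\lambda M_\lambda^2 \cdot \lambda^2 \cdot \gamma^{-2} = \lambda\cdot(\lambda\gamma^{-2})$ after using $\lambda M_\lambda^2 = 1$---stay below the $\lambda m/(3\gamma)$ budget. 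Once the one-step contraction is established, the almost sure geometric bound for all $n$ is immediate by iteration.
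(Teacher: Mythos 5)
Your proposal matches the paper's proof in all essential respects: synchronous coupling to cancel the noise, reduction to a deterministic one-step contraction in the weighted norm, strong monotonicity of $h_\lambda$ for dissipation in position, velocity damping from the O-steps, a Young-type trade-off via the cross term to pay for the hypocoercive mixing (producing the rate $m/(3\gamma)$), and absorption of the higher-order Lipschitz error terms under the stepsize restriction. The only cosmetic difference is organization: the paper factors the one-step map as $(I_{2d}+\lambda A)\bar z + h$ and reads the dissipation directly off $MA+A^{\mathsf T}M$ before bounding the error block $h$, whereas you expand $\|\bar z_1\|_{a,b}^2$ term by term, but the resulting estimates coincide.
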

\begin{theorem}
Let $S(x,v)=(x+bv,\sqrt{a-b^2}v).$
    Let $z_0=(x_0,v_0)$ be the initial conditions of the algorithm such that $\mathcal{L}(z_0)$ satisfies and LSI with constant $C_{LSI,0}$ 
  Then $\mathcal{L}(S(x_n,v_n))$ satisfies an LSI with \[C_{LSI,n}\leq (1-\lambda\kappa)^n \sqrt{1+a}C_{LSI,0} + \frac{1-r^{2n}}{1-r^{2}}C_1\]
  where $r:=\sqrt{1-\lambda \kappa}$ and $C_1=8\sqrt{1-e^{-2\lambda\gamma}}$ 
\end{theorem}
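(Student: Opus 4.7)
The strategy is to reduce the problem to a Markov chain that contracts in Euclidean distance, and then invoke a standard LSI propagation lemma. Observe that the linear map $S$ is precisely chosen so that $\|S(z)\|^2 = \|z\|_{a,b}^2$ (a direct expansion), so that setting $y_n := S(z_n)$, the pathwise estimate of the preceding theorem upgrades, via the synchronous coupling, to $\|y_{n+1}-\tilde y_{n+1}\|\le r\|y_n-\tilde y_n\|$ almost surely, where $r=\sqrt{1-\lambda\kappa}$. In particular, the one-step transition kernel of $(y_n)$ is $W_2$-contractive with rate $r$ in the Euclidean geometry, which is the correct setting for propagating a Log-Sobolev inequality.

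The plan is then to apply the classical one-step LSI-propagation principle (in the spirit of Vempala--Wibisono and its adaptations to tamed chains in the sampling literature): if $\mu$ satisfies an LSI with constant $C$, and $Q$ is a Markov kernel that is $r$-Wasserstein contractive and whose conditional law $Q(y,\cdot)$ satisfies an LSI with constant $C_K$ uniformly in the starting state $y$, then $\mu Q$ satisfies an LSI with constant at most $r^{2}C+C_K$. Iterating from $C_0':=C_{LSI}(\mathcal L(y_0))$ yields
\[
C_{LSI}(\mathcal L(y_n))\ \le\ r^{2n}C_0'+\frac{1-r^{2n}}{1-r^{2}}\,C_K,
\]
which matches the announced bound once $C_0'$ and $C_K$ are identified.

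The remaining task is therefore twofold. First, to pass from the LSI constant of $\mathcal L(z_0)$ to that of $\mathcal L(y_0)=S_{\#}\mathcal L(z_0)$, one uses the stability of LSI under Lipschitz pushforwards together with the operator-norm bound on $S$ implicit in \eqref{eq-equivdist}; this produces the prefactor $\sqrt{1+a}$ in front of $C_{LSI,0}$. Second, and more substantially, one must bound the one-step conditional LSI constant $C_K$. Conditionally on $y_n$, the next state is $y_{n+1}=S(x_1,v_1)$, an affine-in-$(G,G')$ map post-composed with the Lipschitz nonlinearity $h_\lambda(x_1)$ that feeds back into $v_1$. Since $(G,G')\sim\mathcal N(0,I_{2d})$ satisfies LSI with constant $1$, LSI stability again reduces the problem to estimating the Lipschitz norm of the map $(G,G')\mapsto S(x_1,v_1)$; the noise magnitude is governed by $\sqrt{1-\tilde\eta^{\,2}}$, and the identity $\sqrt{1-\tilde\eta^{\,2}}\sqrt{1+\tilde\eta^{\,2}}=\sqrt{1-e^{-2\lambda\gamma}}$ is what brings the square root in $C_1$ to the surface.

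The main obstacle is this last Lipschitz estimate. The noise $G$ enters $x_1$ linearly but then re-enters $v_1$ through $h_\lambda(x_1)$, so the Jacobian of $(G,G')\mapsto S(x_1,v_1)$ mixes $G$ across all four blocks and has to be controlled using the global Lipschitz bound $|h_\lambda|_{\mathrm{Lip}}\le 1/\sqrt\lambda$ together with the sensitivity of $S$ encoded by $a,b$. The smallness conditions $\gamma\ge 2\sqrt{M_\lambda}$, $\lambda\le m/(33\gamma^3)$, and $b^2\le a/4$ are tuned so that the $O(\lambda^2)$ and cross terms produced by this calculation collapse into an absolute constant times $\sqrt{1-e^{-2\lambda\gamma}}$, yielding the advertised $C_1=8\sqrt{1-e^{-2\lambda\gamma}}$. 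Once this conditional LSI bound is in hand, an induction on $n$ via the propagation inequality completes the proof.
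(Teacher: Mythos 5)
Your approach matches the paper's: both conjugate by $S$ (exploiting $\|S(z)\|^2=\|z\|^2_{a,b}$) to turn the $\|\cdot\|_{a,b}$ contraction into a Euclidean one, then apply the one-step LSI propagation inequality $C\mapsto r^2C+C_1$ (the paper cites Monmarch\'e, Theorem 23; you invoke the Vempala--Wibisono-style principle, which is the same mechanism once combined with Kuwada duality), bound the conditional LSI constant $C_1$ via the Lipschitz dependence of the OBABO step on the driving Gaussians $(G,G')$, and handle the initial data via Lipschitz pushforward of $S$, iterating geometrically. The only minor difference is that the paper formalizes the contraction step through the gradient-commutation estimate $|\nabla P_{a,b}f|\leq r\,|P_{a,b}\nabla f|$ obtained from $W_\infty$-contraction and Kuwada's duality theorem, whereas you phrase it directly as Wasserstein contractivity of the kernel; these are equivalent in this synchronous-coupling setting.
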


\begin{theorem}
    Let $(x_n,v_n)$ be the $n$-th iterate of the OBABO tamed scheme with stepsize $\lambda$.
    Then, we have
\begin{equation}
\begin{aligned}
W_2(\mathcal{L}(x_n),\pi)
    &\leq W_2(\mathcal{L}(x_n,v_n),\Pi)
    \\
    &\leq 3 \sqrt{M_\lambda}(1-\frac{3m}{\gamma} \lambda)^{n/2} W_2(\mathcal{L}(x_0,v_0),\Pi) + C' M_\lambda \frac{\lambda \gamma}{m}
\end{aligned}
\end{equation}
    where $C'$ depends polynomially on the dimension, $M_\lambda$ is the Lipschitz constant of $h_\lambda$ $\gamma\geq \sqrt{M_\lambda}$ To reach $\epsilon$ needs $\mathcal{O}(\frac{\log(\epsilon)}{\epsilon^{2.5}})$ tolerance.
\end{theorem}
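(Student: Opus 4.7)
The strategy mirrors Theorem 3 for the exponential scheme, but now leaning on the contraction (Theorem 4) and the propagation of the log-Sobolev inequality (Theorem 5) for the OBABO chain. I will denote by $\Pi_\lambda$ the invariant measure of the tamed OBABO Markov chain in the extended space $\R^{2d}$, and exploit the decomposition
\begin{equation*}
W_2\bigl(\mathcal{L}(x_n,v_n),\Pi\bigr)\ \leqslant\ W_2\bigl(\mathcal{L}(x_n,v_n),\Pi_\lambda\bigr)+W_2\bigl(\Pi_\lambda,\Pi\bigr),
\end{equation*}
together with the projection bound $W_2(\mathcal{L}(x_n),\pi)\leqslant W_2(\mathcal{L}(x_n,v_n),\Pi)$, which follows since $\pi$ and $\Pi$ are marginals in $\theta$ and the weighted norm controls the Euclidean norm via \eqref{eq-equivdist}.

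For the first (transient) term, I would couple $(x_n,v_n)$ with a copy started from $\Pi_\lambda$ (which is invariant, hence stationary) and apply Theorem~4 almost surely: this yields exponential contraction at rate $(1-\lambda\kappa)^{n/2}$ with $\kappa=m/(3\gamma)$ in the weighted norm, and \eqref{eq-equivdist} then converts this to Euclidean $W_2$ at the price of the prefactor $\sqrt{M_\lambda}$ (since $a=1/M_\lambda$ makes $\max\{1,a\}=1$ and $\min\{1,a\}=1/M_\lambda$). This produces the first summand $3\sqrt{M_\lambda}(1-3m\lambda/\gamma)^{n/2}W_2(\mathcal{L}(x_0,v_0),\Pi_\lambda)$ appearing in the statement (the right-hand side should be read as involving the initial distribution).

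For the bias $W_2(\Pi_\lambda,\Pi)$, the plan is to leverage Theorem~5: passing to the stationary limit $n\to\infty$ in the LSI bound and using $(1-\lambda\kappa)^n\to 0$ shows that the pushforward $S_\#\Pi_\lambda$ satisfies an LSI with constant $C_1/(1-r^2)=\mathcal{O}(1/\kappa)=\mathcal{O}(\gamma/m)$; since $S$ is an affine isomorphism, $\Pi_\lambda$ itself satisfies an LSI with the same order of constant. By the Otto--Villani theorem, this converts relative entropy to $W_2$, so it suffices to bound $\mathrm{KL}(\Pi_\lambda\,\|\,\Pi)$. I would obtain this KL bound via a one-step discretization-error estimate: writing the OBABO update as the composition of the exact Hamiltonian/Ornstein--Uhlenbeck sub-steps with the tamed gradient in place of $\nabla u$, Girsanov or a direct short-time expansion gives a local error of order $\lambda^2$ per step in the gradient, controlled by the moment bounds of Assumption~\ref{A3-Ham} and the taming estimate $|h_\lambda-h|=O(\lambda^{1/2})$ outside $B(0,r_\lambda-2)$. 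Integrating this with the LSI constant yields $W_2(\Pi_\lambda,\Pi)=\mathcal{O}(M_\lambda\lambda\gamma/m)$, matching the second summand $C'M_\lambda\lambda\gamma/m$.

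Finally, to extract the complexity, I would balance the two terms: the contraction term requires $n=\mathcal{O}((\gamma/(m\lambda))\log(1/\varepsilon))$ to reach tolerance $\varepsilon$, while the bias term forces $\lambda=\mathcal{O}(\varepsilon m/(M_\lambda\gamma))$. With $M_\lambda=\mathcal{O}(\lambda^{-1/2})$ and $\gamma=\mathcal{O}(\lambda^{-1/4})$ (imposed by Theorem~4 through $\gamma\geqslant 2\sqrt{M_\lambda}$ and the step condition $\lambda\leqslant m/(33\gamma^3)$), solving gives $\lambda=\mathcal{O}(\varepsilon^{4/3})$ and $n=\mathcal{O}(\varepsilon^{-2.5}\log(1/\varepsilon))$, as claimed. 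The main obstacle I anticipate is the bias bound $W_2(\Pi_\lambda,\Pi)$: because the potential is only locally Lipschitz and the tamed drift differs from $\nabla u$ on an unbounded set, the Girsanov step must be combined with uniform-in-$n$ moment bounds for the OBABO chain to dominate the discrepancy, and these moment bounds have to be proved separately using the strong monotonicity of $h_\lambda$ and Assumption~\ref{A3-Ham}.
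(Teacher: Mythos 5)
Your decomposition $W_2(\mathcal{L}(x_n,v_n),\Pi)\leq W_2(\mathcal{L}(x_n,v_n),\Pi_\lambda)+W_2(\Pi_\lambda,\Pi)$, the projection bound $W_2(\mathcal{L}(x_n),\pi)\leq W_2(\mathcal{L}(x_n,v_n),\Pi)$, the use of Theorem~4 for the transient term, and the final complexity bookkeeping all match the paper. The divergence is in how the bias $W_2(\Pi_\lambda,\Pi)$ is controlled, and here I think your route has genuine gaps. You propose to (i) extract an LSI for $\Pi_\lambda$ from Theorem~5, (ii) invoke Otto--Villani/Talagrand, and (iii) bound $\mathrm{KL}(\Pi_\lambda\,\|\,\Pi)$ via Girsanov or a short-time expansion. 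Step (iii) is the real obstacle: the OBABO chain is a discrete composition of Gaussian convolutions with a nonlinear tamed drift, and there is no ready-made Girsanov density against the exact kinetic semigroup (the noise is degenerate in continuous time, and in the discrete chain the position update involves $G$ nonlinearly through $h_\lambda(x_1)$); bounding the relative entropy of two stationary laws in this setting would require an entropy-dissipation or interpolation argument that you have not sketched and that the paper nowhere supplies. Step (i) is also off by a power: passing to $n\to\infty$ in Theorem~5 gives $C_{LSI,\infty}=C_1/(1-r^2)$ with $1-r^2=\lambda\kappa$ and $C_1=\mathcal{O}(\lambda\gamma)$, i.e.\ $\mathcal{O}(\gamma^2/m)$, not $\mathcal{O}(\gamma/m)$. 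The paper instead exploits the OBABO split $P=P_O P_V P_O$ directly and stays entirely in Wasserstein: the telescoping bound $W_{a,b}(\Pi_\lambda,\Pi)\leq \frac{2}{\lambda\kappa}W_{a,b}(\Pi P,\Pi)$ from contraction, then $W_p(\Pi P,\Pi)\leq W_p(\Pi P_V,\Pi)$ using that $\Pi$ is $P_O$-invariant (Lemma~\ref{Po-inv}) and $P_O$ is non-expansive (Lemma~\ref{Po-nonincr}), and finally $W_2(\Pi P_V,\Pi)=W_2(\Pi P_V,\Pi Q_\lambda)=\mathcal{O}(M_\lambda\lambda^2)$ by a pathwise synchronous coupling of the Verlet map $\Phi_V$ against the exact Hamiltonian flow (under which $\Pi$ is also invariant), controlled by the moment bounds and the taming error. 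This avoids LSI, KL, and Girsanov entirely and is the argument you should adopt; Theorem~5 plays no role in the proof of the present theorem.
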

\subsection{Comparison with related literature}
The contribution of the current article is twofold. On the one hand, it provides contraction rates for underdamped numerical schemes beyond global Lipschitz continuity of the drift. This is quite novel, as the common way to study these schemes is to study the contraction properties of the continuous type dynamics, which may sometimes be very complicated. For example in \cite{johnston2023kinetic} a very complicated proof was employed which relied on the use of the Moreau-Yosida transform to essentially first sample from the Moreau-Yosida measure and the compare to the original.
On the other hand, this work provides the first convergence results for the Velvet-scheme operator in the super-linearly growing case while also considerably improves the rate of convergence for a tamed exponential euler scheme than the one obtained in \cite{johnston2023kinetic}. Given the
the fact that in many applications the underdamped scheme has been performing better than the overdamped our algorithm can be employed as a viable alternative to other overdamped tamed Langevin algorithms (\cite{tula},\cite{lytras2024tamed},\cite{lytras2023taming},\cite{TUSLA},\cite{neufeld2022non}).
In addition, another novel contribution of our work is the proof of Log-Sobolev inequalities for the iterates of algorithm. There has been a lot of active research in this domain, as such inequalites are quite important to establish results in differential privacy.
\section{Contraction rates for different schemes-Strategy}

In order to prove contraction rates for different schemes we will adopt the following notation.
Let $z_n=(x_n,v_n)$ the $n-th$ iterate of the scheme with initial condition $z_0$ and $\tilde{z}_n=(\tilde{x}_n,\tilde{v}_n)$ the n-th iterate of the same scheme with initial condition $\tilde{z}_0$.
In order to prove a contraction of the form
\begin{equation}\label{contrgeneral}
||z_{n+1}-\tilde{z}_{n+1}||^2_{a,b}\leq (1-f(\lambda))||z_n-\tilde{z}_n||^2_{a,b}    
\end{equation}
if one sets $G=\left(\begin{array}{cc}
I_d & b I_d \\
b I_d & a I_d
\end{array}\right)$ and $P$ being the update rule $(z_{n+1}-\tilde{z}_{n+1})=P({z}_{n}-\tilde{z}_n)$ \eqref{contrgeneral} is equivalent to showing \begin{equation}\label{eq-H}
    \mathcal{H}:=(1-f(\lambda)) G-P^T G P \succ 0
\end{equation}
Since $H$ is symmetric it can take the form
\begin{equation}\label{eq-H-2}
    \mathcal{H}=\left(\begin{array}{ll}
A & B \\
B & C
\end{array}\right),
\end{equation}
a key component in our proofs will be the use of the following theorem:
\begin{theorem}[\cite{horn2005basic}, Theorem 1.6]\label{theo-schur}
    Theorem 1.12 Let $A$ be a Hermitian matrix partitioned as

$$
A=\left(\begin{array}{ll}
A_{11} & A_{12} \\
A_{12}^* & A_{22}
\end{array}\right)
$$

in which $A_{11}$ is square and nonsingular. Then\\
(a) $A>0$ if and only if both $A_{11}>0$ and $A_{22}-A^{*}_{12}A^{-1}_{11}A_{12}>0$.\\
(b) $A \geq 0$ if and only if $A_{11}>0$ and $ A_{22}-A^{*}_{12}A^{-1}_{11}A_{12} \geq 0$.
\end{theorem}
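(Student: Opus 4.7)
My plan is to prove the theorem by exhibiting an explicit block--triangular factorization that diagonalizes the partitioned structure of $A$, thereby reducing the question of positive (semi)definiteness to a transparent block--diagonal analysis. Set $S := A_{22} - A_{12}^{*} A_{11}^{-1} A_{12}$ for the Schur complement. Since $A$ is Hermitian, the principal submatrix $A_{11}$ is automatically Hermitian, and because it is assumed nonsingular, $A_{11}^{-1}$ exists and is Hermitian as well. The central identity is the Schur--Banachiewicz factorization
\[
A \ = \ L\, D\, L^{*}, \qquad L \ = \ \begin{pmatrix} I & 0 \\ A_{12}^{*} A_{11}^{-1} & I \end{pmatrix},\qquad D \ = \ \begin{pmatrix} A_{11} & 0 \\ 0 & S \end{pmatrix},
\]
which I would verify by a single block multiplication (noting that $L^{*}$ has $A_{11}^{-1} A_{12}$ in the upper right corner, using $(A_{11}^{-1})^{*} = A_{11}^{-1}$). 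The factorization is well-defined precisely because $A_{11}$ is invertible.

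The matrix $L$ is lower block-triangular with identity diagonal blocks, so it is invertible, and hence $A = L D L^{*}$ is a congruence between $A$ and $D$. Congruence by an invertible matrix preserves positive definiteness and positive semidefiniteness (equivalently, it preserves inertia, by Sylvester's law). Therefore $A > 0$ iff $D > 0$, and $A \geq 0$ iff $D \geq 0$. Because $D$ is block-diagonal, $D > 0$ is equivalent to the simultaneous conditions $A_{11} > 0$ and $S > 0$, and analogously $D \geq 0$ is equivalent to $A_{11} \geq 0$ and $S \geq 0$. This yields both (a) and (b) at once, modulo one small wrinkle in (b): the stated condition is $A_{11} > 0$ rather than $A_{11} \geq 0$. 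To recover the stronger conclusion, I would observe that any principal submatrix of a positive semidefinite Hermitian matrix is itself positive semidefinite (apply $A \geq 0$ to vectors of the form $(y,0)$), and that a nonsingular Hermitian positive semidefinite matrix is automatically positive definite (all eigenvalues are $\geq 0$ and none is zero), so the assumed invertibility of $A_{11}$ upgrades the block-diagonal conclusion to exactly the form stated.

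There is essentially no serious obstacle: the entire content of the argument is the block multiplication and the congruence invariance of definiteness. The only items worth a brief check are that $(A_{11}^{-1})^{*} = A_{11}^{-1}$, so that the two outer factors in the Schur--Banachiewicz identity are genuine conjugate-transposes of each other, and that both the strict and the nonstrict versions of Sylvester's law give the correct equivalences. Both are routine, and with them in place the theorem follows in a single step from the factorization.
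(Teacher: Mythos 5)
Your proof is correct: the block factorization $A = LDL^{*}$ is verified by direct multiplication, congruence by the invertible $L$ preserves (semi)definiteness, and you correctly handle the only delicate point in part (b), namely that a nonsingular principal submatrix of a positive semidefinite matrix is positive definite. The paper does not prove this statement at all — it is quoted verbatim from \cite{horn2005basic} as a known result — and your argument is precisely the standard Schur--Banachiewicz/congruence proof one finds there, so there is nothing to reconcile.
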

Throughout our proof a prominent role plays the control of the quantity $\hl(x)-\hl(y) \quad \forall x,y \in \mathbb{R}^d$. A key observation is the fact that if $\hl$ is $L$-Lipschitz and $m$ strongly monotone (with the constant $L$ possibly depending on the stepsize, which will be discussed in the next section) then one obtains $\hl(x)-\hl(y)=Q (x-y)$ for some matrix, where $m I_d\leq Q\leq L I_d$. It is therefore crucial to have construct a drift coefficient which is global Lipschitz smooth but also strongly monotone.
\section{Proofs}
\subsection{Proof of properties of taming scheme}
\begin{proof}[Proof of \ref{tamed-prop}]

\begin{proof}[Proof of (1)]
Noticing that $t(x)=1$ and $s(x)=0 \quad \forall x\in B(0,\rl-2),$ it follows that 
\[h_\lambda(x)=g_\lambda(x)+mx =g(x)+mx=h(x) \quad \forall x\in B(0,\rl-2).\]
    \end{proof}
    \begin{proof}[Proof of (3)]
    We define
    $A_1=\bar{B}(0,r_\lambda-2)$,
    $A_2=\bar{B}(0,r_\lambda-1)/{B}(0,r_\lambda-2)$,
    $A_3=\bar{B}(0,r_\lambda)/ B_{r_\lambda-1}$,
    and
    $A_4=B(0,\rl)^c$.
    Since $g_\lambda=g$, on $A_1$ we have:
    \[|g_\lambda(x)-g_\lambda(y)|\leq Lip_{B_r}|x-y|.\]
    For $x,y \in A_2$ $Lip(s(x))=1$ and $t(x)=1$ on $A_2$
    one obtains
    \[|g_\lambda(x)-g_\lambda(y)|\leq (Lip_{B})_{r}+R_{\rl}\lambda)|x-y|.\]
    For $x,y \in A_3$ since $Lip(t)=1$ and \[|g(x)|\leq R_{\lambda} \quad \forall x \in A_3\] then,
    \[|g_\lambda(x)-g_\lambda(y)|\leq (Lip(g)_{B\rl} +2R_\lambda)|x-y|\]
    For $x,y \in A_4,$ since $|x|,|y|\geq \rl$ which implies that $\frac{r^2}{|x||y|}\leq 1,$ there holds
    \[\begin{aligned}
        R_\lambda |x-y|^2 -|g_\lambda(x)-g_\lambda(y)|^2 &= R_\lambda^2 |x-y|^2-\big | R_\lambda r_\lambda \frac{x}{|x|}- R_\lambda r_\lambda \frac{y}{|y|}\big |^2
        \\&= R^2_\lambda\left( |x|^2+|y|^2-2\rl+2\langle x,y\rangle(\frac{\rl^2}{|x||y|}-1)\right)
        \\&\geq R^2_\lambda(|x|^2+|y|^2 -2\rl^2 +2\rl^2 -2|x||y|)
        \\&\geq R_\lambda^2 (|x|-|y|)^2
        \\&\geq 0
    \end{aligned}\]
   so for each set $A_i$ there holds
   \[|g_\lambda(x)-g_\lambda(y)|\leq (Lip(g)_{B{\rl}}+ r_\lambda R_\lambda)|x-y|\]
   Let $x,y \in \mathbb{R}^d$ arbitrary. It is easy to see that every straight line $l$ intersects the boundary of of any ball at most twice so that there exist at most $l\leq 6$ points where $l$ intersects any $\partial A_i$. We denote $l_1\dots l_{k-1}$ the points of intersection and $l_0=x$ and $l_k=y$. For every line segment $[l_j,l_{j+1}]$ since $A_i$ cover all $\mathbb{R}^d$, every point in $[l_j,l_{j+1}]$ belong at least in one $A_i$. If the two endpoints belong in different sets by the continuity of line segment there would exists a point $x_0\in [l_j,l_{j+1}]$ that belongs in the intersection which is not possible. So both endpoints belong in the same $A_i$ so since $A_i$ is closed the whole segment belongs in an $A_i$. As a result, using a triangle and the Lipschitzness in each $A_i$ one obtains
   \[\begin{aligned}
       |g_\lambda(x)-g_\lambda(y)|&=|g_\lambda(l_0)-g_\lambda(l_k)|
       \\&\leq \sum_{i=1}^k |g(l_{i-1})-g(l_i)|
       \\& \leq  (Lip_{B_r} +R_\lambda r_\lambda) sum_{i=1}^k|l_i-l_{i-1}|
       \\&=(Lip_{B_r} +R_\lambda r_\lambda) |x-y|
       \end{aligned}\]
       where the last line holds since the segments are collinear.
\end{proof}   
\begin{proof}[Proof of (4)]
  In order to show that $g_\lambda$ is monotone, we will first prove it is monotone in each $A_i$.
Let $x,y\in A_1$. Then, it is trivial due to the monotonicity of $g$. 
Let $x,y\in A_2$. We assume that $|x|\geq |y|$ so $s(x)\geq s(y).$ Then,
\[\begin{aligned}
    \langle g_\lambda(x)-g_\lambda(y),x-y\rangle
    &= \langle g(x)-g(y),x-y\rangle + R_\lambda \langle s(x)x-s(y)y,x-y\rangle \\
    &\geq R_\lambda s(x)|x|^2 +s(y)|y|^2 -(s(x)+s(y))\langle x,x-y\rangle\\&\geq R_\lambda (s(x)|x|-s(y)|y|)(|x|-|y|)\\&\geq 0. 
\end{aligned}\]
For $x,y \in A_3,$
\[\begin{aligned}
    \langle g_\lambda(x)-g_\lambda(y),x-y\rangle &=R_\lambda |x-y|^2 +t(y) \langle g(x)-g(y),x-y\rangle
    \notag\\
    &\qquad
        + (t(x)-t(y))\rangle g(x),x-y\rangle\\& \geq 
    R_\lambda |x-y|^2 -||x|-|y|||g(x)||x-y|\\&\geq 
    (R-|g(x)|)|x-y|^2\geq 0
\end{aligned}\]
For $x,y\in A_4$,
\[\langle g_\lambda(x)-g_\lambda(y),x-y\rangle=R_\lambda r_\lambda \langle \frac{x}{|x|}-\frac{y}{|y|},x-y\rangle \geq 0\]
To complete the proof for arbitrary $x,y$ using the $l_i$ notation of the previous proof then $l_i$ are collinear and two consecutive belong to the same $A_i$ so
\[\langle g_\lambda(x)-g_\lambda(y),x-y\rangle =\sum_{i=1}^k \langle g_\lambda(l_i)-g_\lambda(l{i-1}),l_i-l_{i-1}\rangle \geq 0.\qedhere\]
\end{proof}
This proves our claim and completes our proof.
\end{proof}

     \subsection{ Proof of Contraction rates of the Monotonic polygonal Stochastic exponential scheme}
     In the following section we will prove that our scheme has a contractive behaviour a.s in the weighted euclidean norm with $a=\frac{1}{M_{\lambda,Lip}}$ , $b=\frac{1}{\gamma}$.\\ 
     \begin{proposition}
           Let $\gamma\geq 5\sqrt{M_{\lambda}}= \frac{5}{\lambda^\frac{1}{4}}$ and $\lambda\leq \frac{1}{2\gamma}$.
     \\
     Let $z_n=(x_n,v_n)$ the $n-th$ iterate of the stochastic exponential tamed scheme with initial condition $z_0$ and $\tilde{z}_n=(\tilde{x}_n,\tilde{v}_n)$ the n-th iterate of the same scheme with initial condition $\tilde{z}_0$.
     Then, 
     \[||z_{n+1}-\tilde{z}_{n+1}||^2_{a,b}\leq (1-f(\lambda))|z_n-\tilde{z}_n||^2_{a,b} \]
     where $f(\lambda)=\frac{m\lambda}{4\gamma}.$
     \end{proposition}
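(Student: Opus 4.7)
Since the two trajectories share the Gaussian increments $(\Xi_{n+1},\Xi'_{n+1})$, the noise cancels in the difference $\Delta z_{n+1}=z_{n+1}-\tilde z_{n+1}$, and $\Delta z_n$ evolves by the deterministic linear recursion $\Delta z_{n+1}=P_n\,\Delta z_n$ with
\[
P_n \ = \ \begin{pmatrix} I_d-\psi_2(\lambda)\,Q_n & \psi_1(\lambda)\,I_d \\ -\psi_1(\lambda)\,Q_n & \psi_0(\lambda)\,I_d\end{pmatrix},
\]
where $Q_n$ is the secant matrix representing $h_\lambda(x_n)-h_\lambda(\tilde x_n)=Q_n(x_n-\tilde x_n)$, as in the strategy of Section~6. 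By strong monotonicity and Lipschitzness of $h_\lambda$, $mI_d\preceq Q_n\preceq M_\lambda I_d$. Setting $G=\begin{pmatrix} I_d & bI_d \\ bI_d & aI_d\end{pmatrix}$, the target contraction is equivalent to the matrix inequality $\mathcal H_n:=(1-f(\lambda))\,G-P_n^\top G P_n\succeq 0$ holding uniformly in all admissible $Q_n$, with $f(\lambda)=m\lambda/(4\gamma)$.

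\textbf{Diagonalisation and Schur complement.} The central structural observation is that every block of $P_n^\top G P_n$ is a scalar polynomial in $Q_n$; consequently $\mathcal H_n$ commutes with $Q_n$ and diagonalises simultaneously with it. On each eigenspace of $Q_n$ corresponding to an eigenvalue $q\in[m,M_\lambda]$, the matrix inequality reduces to the positive semidefiniteness of the $2\times 2$ matrix $\begin{pmatrix} A(q) & B(q) \\ B(q) & C\end{pmatrix}$, where a direct expansion yields
\begin{align*}
A(q) &= -f+2(\psi_2+b\psi_1)\,q-\po\psi_2^2+2b\psi_1\psi_2+a\psi_1^2\pf q^2,\\
B(q) &= (1-f)\,b-(\psi_1+b\psi_0)+\po\psi_1\psi_2+b\psi_0\psi_2+b\psi_1^2+a\psi_0\psi_1\pf q,\\
C    &= (1-f)\,a-\psi_1^2-2b\psi_0\psi_1-a\psi_0^2.
\end{align*}
Invoking Theorem~\ref{theo-schur}, the claim reduces to checking the two scalar conditions $A(q)>0$ and $A(q)\,C\geq B(q)^2$ for every $q\in[m,M_\lambda]$.

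\textbf{Parameter calibration.} With the choices $a=1/M_\lambda$, $b=1/\gamma$, I would substitute the Taylor expansions $\psi_0=1-\gamma\lambda+\tfrac12(\gamma\lambda)^2+O((\gamma\lambda)^3)$, $\psi_1=\lambda-\tfrac12\gamma\lambda^2+O(\lambda(\gamma\lambda)^2)$, $\psi_2=\tfrac12\lambda^2+O(\gamma\lambda^3)$. The leading contribution to $A(q)$ is $2b\psi_1 q\sim 2\lambda q/\gamma$, which, under $\gamma\geq 5\sqrt{M_\lambda}$ and $\lambda\gamma\leq 1/2$, exceeds $f$ by at least a factor of $8q/m\geq 8$ and dwarfs the quadratic correction $a\psi_1^2 q^2\leq \lambda^2 M_\lambda$. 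The bottom-right entry $C$ is of order $2\lambda/\gamma$ up to strictly lower-order corrections, while $B(q)=O(\lambda)$ uniformly in $q$. The regime conditions $\gamma\geq 5\sqrt{M_\lambda}$ and $\lambda\leq 1/(2\gamma)$ are precisely tuned so that the cross-term contribution $B(q)^2$ can be absorbed into $A(q)\,C$ with a margin sufficient to preserve the leading rate $f=m\lambda/(4\gamma)$.

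\textbf{Main obstacle.} The difficulty lies not in any single estimate but in the bookkeeping of competing scales: the three small parameters $\lambda$, $\gamma^{-1}$, and $M_\lambda^{-1}$ are coupled by $M_\lambda=\lambda^{-1/2}$ and $\gamma=\Theta(\lambda^{-1/4})$, so a typical term $\psi_i\psi_j\,q^k$ lies at an order dictated by a nontrivial combination of these. Because the contraction rate $m\lambda/(4\gamma)$ is pinned tightly, essentially no slack remains, and every term at order $\lambda/\gamma$ must be tracked exactly before strictly lower-order quantities are discarded. Once the Schur inequality $A(q)\,C\geq B(q)^2$ is verified uniformly in $q\in[m,M_\lambda]$, the desired $a.s.$ contraction follows.
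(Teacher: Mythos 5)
Your reduction matches the paper's line for line: the shared Gaussian increments cancel in the difference, the update is the linear map $P_n$ with secant matrix $Q_n$ satisfying $m I_d \preceq Q_n \preceq M_\lambda I_d$, the claim is recast as $\mathcal{H}_n = (1-f(\lambda))G - P_n^\top G P_n \succeq 0$ and tested via the Schur complement after diagonalising simultaneously with $Q_n$, and your blocks $A(q)$, $B(q)$, $C$ (written in the $\psi_i$) coincide exactly with the paper's expressions written in $\eta = e^{-\gamma\lambda}$.

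The gap is that the computational core of the argument is missing. You state the two scalar conditions $A(q)>0$ and $A(q)\,C \geq B(q)^2$ for $q\in[m,M_\lambda]$, but you never actually verify either of them; your last paragraph defers the work explicitly (``Once the Schur inequality \dots is verified \dots''). The prose about competing scales is precisely where the proof lives: you yourself observe that ``essentially no slack remains,'' so asserting that the quadratic correction is ``dwarfed'' or that $B(q)^2$ ``can be absorbed with a margin'' does not constitute a proof. In particular, at $q=M_\lambda=\lambda^{-1/2}$ the negative term $(a\psi_1^2+2b\psi_1\psi_2+\psi_2^2)q^2$ competes with the positive term $2(b\psi_1+\psi_2)q \approx \lambda q/\gamma$, and whether the rate $f(\lambda)=m\lambda/(4\gamma)$ survives requires the explicit inequalities the paper establishes from $\lambda\gamma/2 \leq 1-\eta \leq \lambda\gamma$ and $0 \leq -1+\eta+\lambda\gamma \leq \lambda^2\gamma^2/2$, namely
\[
\operatorname{eig}(A)\ \geq\ \lambda\,\operatorname{eig}(Q)\left(\frac{7}{8\gamma} - \lambda\frac{M_\lambda}{\gamma} - \lambda\frac{M_\lambda^2}{\gamma^2}\right) > 0
\]
and, after expanding $AC-B^2$ in powers of $\operatorname{eig}(Q)$ and regrouping, a lower bound of the form $\left(\tfrac{\lambda^2}{2}a - 2\tfrac{\lambda^2}{\gamma^2}\right)\operatorname{eig}(Q) > 0$. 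These estimates, absent from your proposal, are what must be supplied to turn the correct outline into a proof.
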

    \begin{proof}[\textbf{Proof of contraction rates for exponential scheme}]
     
         Let $\bar{x}_j:=\left(\tilde{x}_j-x_j\right)$,
         $\bar{v}_j=\left(\tilde{v}_j-v_j\right)$ and $\bar{z}_j=\left(\bar{x}_j, \bar{v}_j\right)$. 
         Let $\eta=e^{-\lambda \gamma}$.\\
         Since $\hl$ is Lipschitz its Hessian exists almost everywhere and one can write :
         \[\hl\left(\tilde{x}_n\right)-\hl\left(x_n\right)=Q(\bar{x}+\lambda \bar{v})\]
         where $Q=\int_{0}^1 \nabla \hl\left(\tilde{x}_n+t\left(x_n-\tilde{x}_n\right)\right) d t$.
         \\
         The process  $\bar{z}_n$ evolves as following:

$$
\bar{x}_{n+1}=\bar{x}_n+\frac{1-\eta}{\gamma} \bar{v}_n-\frac{\gamma \lambda+\eta-1}{\gamma^2} Q \bar{x}_n, \quad \bar{v}_{n+1}=\eta \bar{v}_n-\frac{1-\eta}{\gamma} Q \bar{x}_n,
$$
Following the notation of \eqref{eq-H-2} the matrix has the following components:
\[
\begin{aligned}
A & =-f(\lambda) I_d+2\left(\frac{b(1-\eta)}{\gamma}+\frac{\eta-1+\gamma \lambda}{\gamma^2}\right) Q \\
& -\left(\frac{a(1-\eta)^2}{\gamma^2}+\frac{2 b(1-\eta)(-1+\eta+\gamma \lambda)}{\gamma^3}+\frac{(-1+\eta+\gamma \lambda)^2}{\gamma^4}\right) Q^2 \\
B & =\left(b(1-\eta)-\frac{(1-\eta)}{\gamma}-b f(\lambda)\right) I_d \\
& +\left(\frac{a \eta(1-\eta)}{\gamma}+\frac{b(1-\eta)^2}{\gamma^2}+\frac{b \eta(-1+\eta+\gamma \lambda)}{\gamma^2}+\frac{(1-\eta)(-1+\eta+\gamma \lambda)}{\gamma^3}\right) Q \\
C & =\left(a\left(1-\eta^2\right)-a f(\lambda)-\frac{2 b \eta(1-\eta)}{\gamma}-\frac{(1-\eta)^2}{\gamma^2}\right) I_d
\end{aligned}
\]
In order to use theorem \ref{theo-schur} we need to show that $A$ is positive definite.
\begin{lemma}
A is positive definite.
\end{lemma}
    Let $\eig$ the eigenvalues of $Q$.
Since $A=R(Q)$ for some polynomial $R$ , the eigenvalues of $A$ are given as:
\begin{equation}
    \begin{aligned}
        \eig(A)&=-f(\lambda)+2\left(\frac{b(1-\eta)}{\gamma}+\frac{\eta-1+\gamma \lambda}{\gamma^2}\right)\eig(Q)\\&-\left(\frac{a(1-\eta)^2}{\gamma^2}+\frac{2 b(1-\eta)(-1+\eta+\gamma \lambda)}{\gamma^3}+\frac{(-1+\eta+\gamma \lambda)^2}{\gamma^4}\right)\eig(Q^2)
    \end{aligned}
\end{equation}
By the definition of $Q$ and the fact that $\hl$ is $m-$ strongly monotone and $M_\lambda$-Lipschitz one deduces
\[\begin{aligned}
    \eig(A)&=-f(\lambda)+2\left(\frac{b(1-\eta)}{\gamma}+\frac{\eta-1+\gamma \lambda}{\gamma^2}\right)m\\&-\left(\frac{a(1-\eta)^2}{\gamma^2}+\frac{2 b(1-\eta)(-1+\eta+\gamma \lambda)}{\gamma^3}+\frac{(-1+\eta+\gamma \lambda)^2}{\gamma^4}\right)M_\lambda^2
\end{aligned}\]
Using \eqref{eq-boundeta} and \eqref{eq-boundeta2}, and setting $a=\frac{1}{M_{\lambda}}$ , $b=\frac{1}{\gamma}$, $f(\lambda)=\frac{m\lambda}{4\gamma}.$ 
 one notices that since $\eig(Q)>0,$
 \[-f(\lambda)+2\left(\frac{b(1-\eta)}{\gamma}+\frac{\eta-1+\gamma \lambda}{\gamma^2}\right)\eig(Q)\geq \frac{7}{8}\frac{\lambda \eig(Q)}{\gamma} \] 
 \[-\frac{a(1-\eta)^2}{\gamma^2}\eig(Q^2)\geq -\frac{(1-\eta)^2}{\gamma^3}\eig(Q)^2\geq -\frac{\lambda^2}{\gamma}\eig(Q)^2\]
 \[\begin{aligned}
     -\left(\frac{2 b(1-\eta)(-1+\eta+\gamma \lambda)}{\gamma^3}+\frac{(\eta+\gamma \lambda - 1)^2}{\gamma^4}\right)\eig(Q^2)
     &\geq \left((1-\eta)^2 - \lambda^2\gamma^2\right)\frac{\eig(Q^2)}{\gamma^4}\\&\geq -\frac{\lambda^2}{\gamma^2}\eig(Q)^2
 \end{aligned}\]
 Bringing all together
 \begin{equation}
     \eig(A)\geq \lambda \eig(Q) \left( \frac{7}{8\gamma}-\lambda \frac{M_\lambda}{\gamma}-\lambda\frac{M^2_\lambda}{\gamma^2}\right)>0
 \end{equation}
for sufficiently small stepsize.
\end{proof}
For the rest of the proof we need to show that $AC-B^2>0.$ Again this a polynomial of $Q$ so the expression of the eigenvalues depends on the eigenvalues of $Q$.
Writing $A$ in the form $A=c_{0,A}I_d +c_{1,A}Q+c_{2,A}Q^2$ and $B=c_{0,B}I_d+c_{1,B}Q$ and $C=c_{0,C}I_d$
then, \[AC-B^2=\left(c_{0,A}c_{0,C}-c_{0,B}^2\right)I_d + \left(c_{1,A}c_{0,C}-2c_{0,B}c_{1,B}\right)Q+\left(c_{2,A}c_{0,C}-c_{1,B}^2\right)Q^2 \]
The first term can be written as 
\[\begin{aligned}
    c_0&= f^2(\lambda) (a-\frac{1}{\gamma^2})
    \\&- f(\lambda)(1-\eta)(1+\eta)(a-\frac{1}{\gamma^2})
    \\&\geq -\frac{1}{2}\lambda^2m (a-\frac{1}{\gamma^2})
    \\&\geq -2 \lambda^2 m \frac{1}{\gamma^2}
    \\&\geq -2 \frac{\lambda^2}{\gamma^2}\eig(Q)
\end{aligned}\]
The eigenvalues of $AC-B^2$ are given by $c_0+c_1a +c_2a^2$
\[\begin{aligned}
& c_1+c_2 a=\left(\eta^2-1\right) f(\lambda)+f(\lambda)^2+2\left(1-\eta^2\right)\frac{\lambda}{\gamma} eig(Q) \\
& +\frac{2 b(1-\eta) \eta f(\lambda) eig(Q)}{\gamma}-2\frac{\lambda}{\gamma} f(\lambda) eig(Q)+\frac{(1-\eta)^4 eig(Q)^2}{\gamma^4} \\
& -\frac{2 \eta(1-\eta)^2(-1+\eta+\gamma \lambda) eig(Q)^2}{\gamma^4}-\frac{2 b(1-\eta)(-1+\eta+\gamma \lambda) eig(Q)^2}{\gamma^3} \\
& -\frac{\left(1-\eta^2\right)(-1+\eta+\gamma \lambda)^2 eig(Q)^2}{\gamma^4}+\frac{2 b(1-\eta)(-1+\eta+\gamma \lambda) f(\lambda) eig(Q)^2}{\gamma^3} \\
& +\frac{(-1+\eta+\gamma \lambda)^2 f(\lambda) eig(Q)^2}{\gamma^4}+a\left(-\frac{(1-\eta)^2 eig(Q)^2}{\gamma^2}+\frac{(1-\eta)^2 f(\lambda) eig(Q)^2}{\gamma^2}\right)
\\&\geq (1-\eta^2)\left(\frac{2\lambda}{\gamma}eig(Q)-f(\lambda)-\frac{1-\eta}{(1+\eta)\gamma^2}\eig(Q)\right)+8(\lambda^3\gamma)\eig(Q)
\\&\geq \left(\frac{3\lambda^2}{4}-8\lambda^3\gamma\right)\eig(Q)
\end{aligned}\]
Multiplying by $a$ and bringing all together,
\[\eig(AC-B^2)\geq \left(\frac{\lambda^2}{2} a -2 \frac{\lambda^2}{\gamma^2}\right)\eig(Q)\geq \left(\frac{5\lambda^2}{2\gamma^2}  -2 \frac{\lambda^2}{\gamma^2}\right)\eig(Q)>0 \]
\subsection{Proof of Contraction rates for Tamed OBABO scheme}
\begin{proposition}\label{controction-OBABO}
   Let \nv{$\gamma \geqslant 2\sqrt{M_{\lambda}}$ and that $\lambda \leqslant m/(33\gamma^3)$. Set 
\[a\ =\ 1/M_{\lambda}\,,\qquad  b\ =\ 1/\gamma\,, \qquad \kappa \ = \  m/(3\gamma)\,.\]
}
Then, $b^2 \leqslant a/4$ and 
 for all $(x_0,v_0),(y_0,w_0)\in \R^{2d}$, if $(x_n,v_n)$ and $(y_n,w_n)$ is the chain in \eqref{eq:OBABO-chain} started at $(x_0,v_0)$ and $(y_0,w_0)$ respectively then, almost surely, for all $n\in\N$,
\[\|(x_n,v_n)-(y_n,w_n)\|_{a,b}^2 \ \leqslant \ \po 1 - \lambda \kappa \pf^n \|(x_0,v_0)-(y_0,w_0)\|_{a,b}^2\,.\]
\end{proposition}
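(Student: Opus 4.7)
The plan is to follow the general strategy laid out in Section~6: freeze one step of the OBABO chain, exploit the synchronous coupling to write the difference process $\bar z_n := (x_n - y_n,\, v_n - w_n)$ as a deterministic linear map $\bar z_{n+1} = P\bar z_n$ (the Gaussians $G, G'$ are common to both trajectories and cancel), and reduce the contraction inequality to the matrix condition
\[
\mathcal H \ :=\ (1-\lambda\kappa)\, G - P^\top G P \ \succeq \ 0,\qquad G = \begin{pmatrix} I_d & b I_d \\ b I_d & a I_d\end{pmatrix}.
\]
The first part of the claim, $b^2 \leq a/4$, is immediate from $\gamma \geq 2\sqrt{M_\lambda}$ and ensures that $G\succ 0$ so that $\|\cdot\|_{a,b}$ is a genuine norm via \eqref{eq-equivdist}.

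To obtain $P$ in workable form, I would decompose OBABO into its five elementary sub-steps and write each as a $2d\times 2d$ block matrix acting on $(\bar x,\bar v)$: the two O sub-steps reduce on the difference process to multiplication of the velocity component by $\tilde\eta$ (noise cancels); each B sub-step acts as $(\bar x,\bar v)\mapsto (\bar x,\,\bar v - \tfrac{\lambda}{2}Q_i\bar x)$, where $Q_0 = \int_0^1 \nabla h_\lambda(y_0 + t\bar x_0)\,dt$ and $Q_1$ is its analogue at the intermediate position $x_1$; and the A sub-step is free transport $(\bar x,\bar v)\mapsto (\bar x + \lambda\bar v,\bar v)$. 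Composing, $P = O\cdot B_2(Q_1)\cdot A\cdot B_1(Q_0)\cdot O$. By the monotonicity-preserving taming lemma, $m\, I_d \preceq Q_i \preceq M_\lambda\, I_d$ for $i=0,1$, so all quantitative control of $P$ descends to spectral bounds on $Q_0$ and $Q_1$.

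Partitioning $\mathcal H = \bigl(\begin{smallmatrix}\mathcal A&\mathcal B\\ \mathcal B^\top&\mathcal C\end{smallmatrix}\bigr)$ and invoking Theorem~\ref{theo-schur}, it is enough to prove $\mathcal A\succ 0$ and $\mathcal C - \mathcal B^\top \mathcal A^{-1}\mathcal B\succeq 0$. Expanding $P^\top G P$ block by block, the position-position block $\mathcal A$ collects the monotonicity gain from the two B-kicks, whose leading term is of size $\lambda b\,(Q_0+Q_1)\succeq 2\lambda b\, m\, I_d$; the velocity-velocity block $\mathcal C$ collects the dissipation from the two O sub-steps, whose leading part is $\bigl((1-\tilde\eta^4) - \lambda\kappa\bigr) a\, I_d$; and the mixed block $\mathcal B$ starts at order $\lambda$. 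Using the Taylor expansions $1-\tilde\eta^2 = \lambda\gamma + O((\lambda\gamma)^2)$ and $|1-\tilde\eta - \tfrac{\lambda\gamma}{2}| = O((\lambda\gamma)^2)$, the Schur complement reduces, on each joint spectral direction of the symmetric parts involved, to scalar inequalities whose corrections are of orders $\lambda^2\gamma^2$, $\lambda^2 M_\lambda$ and $\lambda^3 \|Q_0 Q_1\|$, each of which I would bound in operator norm against the leading contraction budget $\lambda\kappa = \lambda m/(3\gamma)$.

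The main obstacle is algebraic rather than conceptual: since $Q_0$ and $Q_1$ are evaluated at two different points they do not commute, so the ``replace $Q$ by its eigenvalue'' reduction used for the exponential scheme is unavailable. I would handle this by organizing the expansion of $P^\top G P$ by powers of $\lambda$ and bounding each non-commuting remainder $Q_0 Q_1,\, Q_1 Q_0,\, Q_0 Q_1 Q_0,\dots$ in operator norm by the appropriate power of $M_\lambda$; the symmetric monotone parts of these products still contribute positively through the bound $Q_0 + Q_1 \succeq 2m I_d$. The calibration $\gamma\geq 2\sqrt{M_\lambda}$ controls the $O(\lambda)$ commutator contributions (this is exactly where the ratio $M_\lambda/\gamma^2$ must be $\leq 1/4$), while the sharp condition $\lambda\leq m/(33\gamma^3)$ absorbs the residual higher-order terms into $\lambda\kappa$. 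Once the one-step estimate $\|\bar z_1\|_{a,b}^2\leq (1-\lambda\kappa)\|\bar z_0\|_{a,b}^2$ is established almost surely for every admissible realization of $Q_0,Q_1$, the $n$-step bound follows by trivial iteration, as the estimate is pointwise in the Brownian path.
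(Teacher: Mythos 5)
Your overall setup—synchronous coupling, reduction to a matrix inequality, iteration of a one-step bound—matches the paper, and you correctly identify the central obstacle that $Q_0$ and $Q_1$ do not commute. But the specific route you propose, applying Theorem~\ref{theo-schur} to $\mathcal H = (1-\lambda\kappa)G - P^\top G P$, is \emph{not} how the paper handles the OBABO scheme, and the step on which your plan hinges has a genuine gap.

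The claim that ``the Schur complement reduces, on each joint spectral direction of the symmetric parts involved, to scalar inequalities'' cannot be made rigorous here: without commutativity of $Q_0$ and $Q_1$ there is no simultaneous diagonalization, so there are no joint spectral directions. Moreover the Schur complement $\mathcal C - \mathcal B^\top\mathcal A^{-1}\mathcal B$ contains $\mathcal A^{-1}$, whose expansion involves an infinite noncommutative series in $Q_0,Q_1$; bounding the error terms in operator norm, as you propose, loses precisely the sign information that Schur's criterion is supposed to deliver, so you would not be able to certify $\mathcal H\succeq 0$ this way with the constants needed. The Schur-complement strategy in the paper's strategy section is used for the stochastic exponential scheme, where a \emph{single} $Q$ appears and $\mathcal H$ is a polynomial in $Q$; it is deliberately not used for OBABO.

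What the paper actually does for OBABO is write the one-step map on the difference process as $\bar z' = (I_{2d} + \lambda A)\bar z + h$, where
\[
A = \begin{pmatrix} 0 & I_d \\ -\tfrac12(Q+Q') & -\gamma I_d \end{pmatrix},
\]
and $h$ collects the genuinely $O(\lambda^2)$ corrections (including the piece $Q'\Delta\bar x$). It then expands $\|\bar z'\|_{a,b}^2$ directly, isolates the leading quadratic form $\lambda\,\bar z\cdot(MA + A^\top M)\bar z$, and controls the cross term $-a(Q+Q')/2$ by the Young inequality $2\bar x\cdot(Q+Q')\bar v\leq\theta^{-1}\bar x\cdot(Q+Q')\bar x + \theta\,\bar v\cdot(Q+Q')\bar v$ with $\theta = \gamma/M_\lambda$, which yields $\bar z\cdot(MA+A^\top M)\bar z\leq -\tfrac{2m}{3\gamma}\|\bar z\|_{a,b}^2$ without ever inverting a block or needing joint diagonalization. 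The remaining terms $2\|\bar z\|_{a,b}\|h\|_{a,b}+2\lambda^2\|A\bar z\|_{a,b}^2+2\|h\|_{a,b}^2$ are bounded crudely in operator norm by $11\lambda^2\gamma^2\|\bar z\|_{a,b}^2$, and the stepsize condition $\lambda\leq m/(33\gamma^3)$ absorbs them. If you rewrite your ``organize by powers of $\lambda$ and bound remainders in operator norm'' plan along these lines—bounding the quadratic form of $MA+A^\top M$ by Young's inequality rather than via a Schur complement—you will recover the paper's argument; as currently stated, however, the Schur-complement reduction is not available and the plan does not close.
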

\begin{proof}
    Denote $\bx = x-y$, $\bx'=x'-y'$, $\bv = v-w$, $\bv'=v'-w'$ and $\Delta \bar x = \bx'-\bx$. Let $\xi,\xi'\in\R^d$ be such that $\hl(x')-\hl(y')=\na Q' \bx'$ and $\hl(x)-\hl(y)=\na Q \bx$, and let $Q=\na \hl u(\xi)$, $Q'=\na \hl (\xi')$. Note that  $mI_d\leqslant Q\leqslant M_\lambda I_d$ in the sense of symmetric matrices, and similarly for $Q'$. With these notations,
\begin{align*}
\begin{pmatrix}
\bx'\\ \bv'
\end{pmatrix} =
\begin{pmatrix}
\bx\\ \bv
\end{pmatrix}
    &+
    \begin{pmatrix}
    0 & \lambda \tilde{\eta} I_d \\ -\frac{\lambda\tilde{\eta}}2(Q+Q')  & (\tilde{\eta}^2-1)I_d 
    \end{pmatrix} 
    \begin{pmatrix}
    \bx\\ \bv
    \end{pmatrix}
    \\
    &- \frac{\lambda}{2}
\begin{pmatrix}
\lambda Q \bx \\ \tilde{\eta} Q'\Delta \bar x
\end{pmatrix} = (I_{2d}+\lambda A)\begin{pmatrix}
\bx\\ \bv
\end{pmatrix} + h
\end{align*}
with
\[A = \begin{pmatrix}
0 &     I_d \\ -\frac{1}2(Q+Q')  & -\gamma I_d 
\end{pmatrix}\]
and
\[
h = \begin{pmatrix}
0 & \lambda (\tilde{\eta}-1) I_d \\ -\frac{\lambda(\tilde{\eta}-1)}2(Q+Q')  & (\tilde{\eta}^2-1+\lambda \gamma)I_d 
\end{pmatrix} \begin{pmatrix}
\bx\\ \bv
\end{pmatrix} 
- \frac{\lambda}{2}
\begin{pmatrix}
 \lambda Q \bx \\ \tilde{\eta} Q'\Delta \bar x
\end{pmatrix}\,.\]
Writing $\bar z=(\bx,\bv)$, $\bar z'=(\bx',\bv')$ and
\[M = \begin{pmatrix}
I_d & b I_d \\ b I_d & a I_d 
\end{pmatrix}\,,\]
we get
\begin{align*}
\|\bar z'\|_{a,b}^2   \ &= \     \|\bar z\|_{a,b}^2 + \lambda\bar z \cdot  (MA+A^TM)  \bar z + 2 \bar z\cdot M h + \|\lambda A\bar z + h\|_{a,b}^2\\
&\leqslant \ \|\bar z\|_{a,b}^2 + \lambda\bar z \cdot  (MA+A^TM)  \bar z + 2 \|\bar z\|_{a,b}\|h\|_{a,b} + 2\lambda^2\| A\bar z\|_{a,b}^2 +2\| h\|_{a,b}^2\,.
\end{align*}
The choice $a=1/M_\lambda$ and $b=1/\gamma$ with the condition $\gamma\geqslant 2\sqrt{M_\lambda}$ ensures that $b^2\leqslant a/4$ and thus for all $z\in \R^d$,
\begin{align}\label{eq:equi_nome}
\frac12\|z\|_{a,0}^2 \leqslant \|z\|_{a,b}^2\leqslant\frac32\|z\|_{a,0}^2\,.
\end{align}
Using \eqref{eq:equi_nome} and the bounds $|Q|,|Q'|\leqslant M_\lambda$, $|1-\tilde{\eta}|\leqslant\lambda\gamma/2$, 
 $|1-\tilde{\eta}^2-\lambda\gamma|\leqslant\lambda^2\gamma^2/2$ and
\[|\Delta \bx| = |\lambda  \tilde{\eta} \bv   -  \lambda^2/2 Q\bx|\leqslant \lambda|\bv|+\lambda^2M_\lambda/2|\bx|\]
 we roughly bound
\begin{align*}
\|h\|_{a,b}^2 &\leqslant \frac32\left|\lambda (\tilde{\eta}-1) \bv-\frac{\lambda^2}{2}Q\bx\right|^2 \\
    &\qquad
    +\frac32 a\left|-\frac{\lambda(\tilde{\eta}-1)}2(Q+Q')\bx
+ (\tilde{\eta}^2-1+\lambda\gamma)\bv -\frac{\lambda\tilde{\eta}}{2}Q'\Delta\bx\right|^2\\
&\leqslant 3 \lambda^4\gamma^2/4 |\bv|^2+  3\lambda^4M_\lambda^2/4|\bx|^2+3a \po\lambda^2\gamma M_\lambda/2+\lambda^3M_\lambda^2/4\pf^2|\bx|^2
\\
    &\qquad
    +3a\po\lambda^2\gamma^2/2+\lambda^2M_\lambda/2\pf^2|\bv|^2\\
&\leqslant 6\lambda^4\max\po\frac{\po M_\lambda\gamma/2+\lambda M_\lambda^2/4\pf^2}{M_\lambda}+\frac{3M_\lambda^2}{4},\frac{M_\lambda\gamma^2}4+\po\frac{\gamma^2}2+\frac{ M_\lambda}2\pf^2\pf\|\bar z\|_{a,b}^2 \\
&\leqslant 3\lambda^4 \gamma^4 \|\bar z\|_{a,b}^2 
\end{align*}
where we used that $M_\lambda \leqslant \gamma^2/4$ and $\lambda^2 M_\lambda \leqslant M_\lambda^3/\gamma^6\leqslant 1$ to simply the expression,
and similarly
\begin{align*}
\| A\bar z\|_{a,b}^2
    &\leqslant \frac32|\bv|^2+ \frac32 a \left|-\frac12(Q+Q')\bx -\gamma\bv\right|^2
 \
 \\
 &\leqslant 3M_\lambda|\bx|^2 + 3a\po\frac{M_\lambda}{2}+\gamma^2\pf|\bv|^2
 \\
    &\leqslant \frac{27\gamma^2}8 \| \bar z\|_{a,b}^2 \,,
\end{align*}
 so that, using that $6\lambda^2\gamma^2 \leqslant 1/2$,
 \[2 \|\bar z\|_{a,b}\|h\|_{a,b} + 2\lambda^2\| A\bar z\|_{a,b}^2 +2\| h\|_{a,b}^2 \ \leqslant \ 11 \lambda^2\gamma^2   \| \bar z\|_{a,b}^2 \,.\]
 On the other hand,
 \[MA+A^TM = \begin{pmatrix}
 -b(Q+Q')  &-a(Q+Q')/2 \\ -a (Q+Q')/2 & 2(b-a\gamma)I_d
\end{pmatrix} \,. \] 
Bounding $2\bx \cdot (Q+Q')\bv \leqslant \bx \cdot (Q+Q')\bx/\theta + \theta\bv\cdot (Q+Q')\bv$ with $\theta=\gamma/M_\lambda$ we obtain 
\begin{align*}
\bar z \cdot (MA+A^TM) \bar z & \leqslant \bar z \begin{pmatrix}
 \po - \frac1\gamma+ \frac{1}{2M_\lambda\theta}\pf (Q+Q')  & 0 \\ 0  & 2(b-a\gamma)I_d + \frac{a\theta}{2}(Q+Q')
\end{pmatrix} \bar z\\
& \leqslant - \frac{m}{\gamma}|\bx|^2 - a \gamma  \po 1-\frac{2M_\lambda}{\gamma^2 }\pf|\bv|^2 \\
&\leqslant -\frac{2m}{3\gamma} \|\bar z\|_{a,b}^2\,,
\end{align*}
where we used that $\gamma/2 \geqslant M_\lambda/\gamma \geqslant m/\gamma$ and \eqref{eq:equi_nome}. Finally, we have obtained that
\[\|\bar z'\|_{a,b}^2 \leqslant \po 1 - \frac{2m}{3\gamma}\lambda + 11 \lambda^2 \gamma^2\pf \|\bar z\|^2_{a,b} \ \leqslant \ \po 1 - \frac{m}{3\gamma}\lambda \pf \|\bar z\|^2_{a,b}\,. \]
\end{proof}
\begin{corollary}
    There holds \[W_{a,b} \left(\mathcal{L}\left( z_{n+1}\right),\mathcal{L}\left(\tilde{z}_{n+1}\right)\right)\leq (1-f(\lambda)) W_{a,b} \left(\mathcal{L}\left( z_{n}\right),\mathcal{L}\left(\tilde{z}_n \right)\right)\]
\end{corollary}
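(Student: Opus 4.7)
The plan is to promote the almost sure pathwise contraction of Proposition \ref{controction-OBABO} to a contraction at the level of laws via a synchronous coupling argument, which is the standard route to obtain Wasserstein bounds from one-step strong contractivity.

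Concretely, I would first fix any coupling $\pi_n$ of $\mathcal{L}(z_n)$ and $\mathcal{L}(\tilde{z}_n)$. Then, for the single step from time $n$ to time $n+1$, I would couple the two chains synchronously: that is, I would feed both copies of the OBABO update rule \eqref{eq:OBABO-chain} the same pair of Gaussian increments $(G_{n+1}, G'_{n+1})$, independent of $\mathcal{F}_n$. Under this coupling, the map $(z_n,\tilde z_n)\mapsto (z_{n+1},\tilde z_{n+1})$ is exactly the deterministic dynamics analyzed in the proof of Proposition \ref{controction-OBABO} applied to a common realization of the noise, so Proposition \ref{controction-OBABO} (applied with $n=1$, i.e.\ the one-step bound that is the content of its proof) yields, almost surely,
\[
\|z_{n+1}-\tilde z_{n+1}\|_{a,b}^{2}\ \leqslant\ (1-\lambda\kappa)\,\|z_n-\tilde z_n\|_{a,b}^{2}.
\]
Taking expectations under the coupling and then taking the infimum over all initial couplings $\pi_n$ delivers the claimed one-step Wasserstein contraction, with $f(\lambda)$ identified as $\lambda\kappa=m\lambda/(3\gamma)$ in the OBABO setting.

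There is essentially no hard step here: the only thing to check carefully is that the pairwise update from $(z_n,\tilde z_n)$ to $(z_{n+1},\tilde z_{n+1})$ under the synchronous coupling satisfies exactly the deterministic recursion that was analyzed in Proposition \ref{controction-OBABO}. This holds because the stochastic part of \eqref{eq:OBABO-chain} enters additively (the $G,G'$ terms are the same in both chains), so they cancel in the difference $\bar z_{n+1}=z_{n+1}-\tilde z_{n+1}$, leaving the purely deterministic recursion $\bar z_{n+1}=(I_{2d}+\lambda A)\bar z_n + h(\bar z_n)$ used in the proof of Proposition \ref{controction-OBABO}. The almost sure bound there applies to any realization of the past, hence to any initial coupling. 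Iterating the one-step inequality gives the geometric contraction stated in the corollary.
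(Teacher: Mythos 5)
Your synchronous-coupling argument is exactly the standard route from the almost sure one-step contraction (Proposition~\ref{controction-OBABO}) to a one-step Wasserstein contraction, and the paper offers no explicit proof of this corollary, so your proposal fills in precisely what the paper leaves implicit. The only point worth flagging is a small bookkeeping discrepancy that is already present in the paper's own statement: the almost sure inequality you invoke is for \emph{squared} $\|\cdot\|_{a,b}$-norms, so taking expectations and infimizing over couplings yields
\[
W_{a,b}^2\big(\mathcal{L}(z_{n+1}),\mathcal{L}(\tilde z_{n+1})\big)\ \leq\ (1-\lambda\kappa)\,W_{a,b}^2\big(\mathcal{L}(z_n),\mathcal{L}(\tilde z_n)\big),
\]
and hence $W_{a,b}(\cdot)\leq\sqrt{1-\lambda\kappa}\,W_{a,b}(\cdot)$. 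Since $\sqrt{1-x}>1-x$ for $x\in(0,1)$, one cannot sharpen this to $(1-f(\lambda))W_{a,b}$ as literally stated; the honest conclusion is $\sqrt{1-f(\lambda)}\,W_{a,b}(\cdot)$, or equivalently the squared-Wasserstein form used in the paper's later Corollary \eqref{Eq:W2contract}. Your plan is otherwise sound, but the final sentence should either keep the square on $W_{a,b}$ or replace $(1-f(\lambda))$ by $\sqrt{1-f(\lambda)}$.
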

\subsection{Proof of Log-Sobolev inequality for the OBABO scheme}
Let $S(x,v)=||(x,v)||_{a,b}.$
We denote $P_{a,b}$ the corresponding Markov semigroup induced the transition operator $S(:)\rightarrow S(OBABO(:))$
\begin{lemma}
    There holds for all bounded Lispchitz functions $f$, 
    \[|\nabla P_{a,b} f(z)|\leq r |P_{a,b} \nabla f(z)|\]
    where $r=\sqrt{1-\lambda \kappa}$.
\end{lemma}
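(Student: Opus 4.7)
The plan is to exploit the synchronous coupling implicit in Proposition \ref{controction-OBABO}. In the transformed coordinates $\tilde z = S(z)$, where $S$ is the linear isomorphism that makes $\|\cdot\|_{a,b}$ coincide with the Euclidean norm, let $\Psi_\omega := S \circ \Phi_\omega \circ S^{-1}$ denote the one-step OBABO update driven by the Gaussian pair $\omega = (G, G')$. Read in these coordinates, Proposition \ref{controction-OBABO} asserts that $|\Psi_\omega(\tilde z) - \Psi_\omega(\tilde z')| \leq r|\tilde z - \tilde z'|$ for all $\tilde z, \tilde z' \in \R^{2d}$ and almost every $\omega$, with $r = \sqrt{1-\lambda\kappa}$. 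Since $\hl$ is globally Lipschitz by the taming construction, $\Psi_\omega$ is a composition of affine maps with a Lipschitz nonlinearity, hence itself Lipschitz in $\tilde z$; by Rademacher's theorem it is differentiable almost everywhere, and the Jacobian $D\Psi_\omega(\tilde z)$ satisfies the operator-norm bound $\|D\Psi_\omega(\tilde z)\|_{op} \leq r$ wherever defined.

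With this in hand, I would proceed in three steps. First, represent the semigroup via the coupling, $P_{a,b}f(\tilde z) = \E_\omega[f(\Psi_\omega(\tilde z))]$. Second, differentiate under the expectation; for smooth $f$ with bounded derivatives this is immediate by dominated convergence, yielding
\[
\nabla P_{a,b}f(\tilde z) \ = \ \E_\omega\bigl[\, D\Psi_\omega(\tilde z)^T \nabla f\bigl(\Psi_\omega(\tilde z)\bigr)\,\bigr]\,.
\]
Third, take Euclidean norms and insert the almost-sure operator-norm bound on $D\Psi_\omega$ to conclude
\[
|\nabla P_{a,b}f(\tilde z)| \ \leq \ r\, \E_\omega\bigl[\,|\nabla f(\Psi_\omega(\tilde z))|\,\bigr] \ = \ r\, P_{a,b}|\nabla f|(\tilde z)\,,
\]
which is the claimed gradient-commutation inequality (under the standard convention of reading $|P_{a,b}\nabla f|$ as the componentwise averaged gradient, which is identified with $P_{a,b}|\nabla f|$ in this context via Jensen).

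For a general bounded Lipschitz $f$, I would extend the smooth case by a standard mollification: set $f_\epsilon := f * \rho_\epsilon$, apply the inequality just established to each $f_\epsilon$ (whose gradient is uniformly bounded by $\mathrm{Lip}(f)$), and pass to the limit using dominated convergence on both sides, exploiting the deterministic gradient bound $r$ on $\Psi_\omega$.

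The main obstacle is largely a bookkeeping one: since the Lipschitz constant $r$ from Proposition \ref{controction-OBABO} is deterministic and uniform in $\omega$ and $\tilde z$, the almost-sure contraction of $\Psi_\omega$ transfers cleanly to the semigroup. The only delicate point is justifying differentiation under the expectation, which relies on Rademacher's theorem applied to the Lipschitz map $\Psi_\omega$, itself a direct consequence of the global Lipschitz property of $\hl$ provided by the taming-scheme Lemma. Once that is in place, the rest of the argument is a mechanical insertion of the a.s. bound $\|D\Psi_\omega\|_{op}\leq r$ into the coupling identity for $\nabla P_{a,b}f$.
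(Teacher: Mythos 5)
Your proof is essentially correct but takes a genuinely different route from the paper. The paper proceeds indirectly: it observes that the almost-sure one-step contraction of Proposition \ref{controction-OBABO} yields a $W_\infty$ contraction of the Markov operator $P_{a,b}$ in the transformed metric, and then invokes the duality theorem of Kuwada (\cite{kuwada2010duality}, Theorem 2.2), which abstractly equates $W_\infty$ contraction with the pointwise gradient-commutation estimate $|\nabla Pf| \le r\,P|\nabla f|$. You instead prove the estimate directly, by writing $P_{a,b}f(\tilde z) = \E_\omega[f(\Psi_\omega(\tilde z))]$, differentiating under the expectation, and inserting the deterministic operator-norm bound $\|D\Psi_\omega\|_{op}\le r$ coming from the a.s.\ contraction. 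Both arguments hinge on exactly the same input (the synchronous-coupling contraction), so the mathematical content is the same; the paper's route is shorter and outsources the measure-theoretic bookkeeping to an established result, while yours is more self-contained and makes the mechanism transparent.

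Two technical caveats on your version. First, the differentiation-under-the-expectation step needs slightly more care than a bare appeal to Rademacher, because the exceptional null set on which $D\Psi_\omega(\tilde z)$ fails to exist depends on $\omega$ (the non-smoothness of $h_\lambda$ enters both at $x_0$ and at the noise-shifted point $x_1$). This can be repaired by noting that $x_1$ has a Gaussian density in $G$, so for every fixed $\tilde z$ with $h_\lambda$ differentiable at $x_0$ the chain rule applies $\omega$-a.s., but this argument needs to be spelled out. Second, the final remark that $|P_{a,b}\nabla f|$ is ``identified with'' $P_{a,b}|\nabla f|$ via Jensen is off: Jensen gives only the one-sided inequality $|P_{a,b}\nabla f|\le P_{a,b}|\nabla f|$. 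What you actually prove is the standard form $|\nabla P_{a,b}f|\le r\,P_{a,b}|\nabla f|$, which is the version used downstream (and the version Kuwada's theorem delivers); the statement as printed in the paper appears to have the modulus on the wrong side of $P_{a,b}$.
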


By the one-step a.s contraction of the OBABO scheme, denoting $r=\sqrt{1-\lambda \kappa}$ one deduces that starting from different intiliazations $z_0=(x_0,v_0)$ and $z'_0=(x'_0,v'_0)$ there holds
\[|Sz_1-Sz'_1|\leq r |S z_0-Sz'_0|\] a.s which implies the $W_\infty$ contraction of the Markov operator $P_{a,b}$. By Theorem 2.2 in \cite{kuwada2010duality} one obtains the result.
\begin{lemma}
    Let $z\in \mathbb{R}^d.$ Then, $P_{a,b}(z)$ satisfies a Log-Sobolev inequality with constant $C_1:=8\sqrt{1-\eta^2}$
\end{lemma}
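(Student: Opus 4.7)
The plan is to realize $P_{a,b}(z_0)$ as the pushforward of the standard Gaussian measure $\mu_{2d}$ on $\R^{2d}$ via an explicit smooth map, then invoke the Lipschitz-pushforward property of the LSI: $\mu_{2d}$ satisfies LSI with constant $1$, and if $\Phi$ is $L$-Lipschitz then $\Phi_\#\mu_{2d}$ satisfies LSI with constant at most $L^2$. With $z_0=(x_0,v_0)$ fixed, $\hl(x_0)$ becomes a constant and one step of the OBABO chain \eqref{eq:OBABO-chain} produces $(x_1,v_1)$ as a deterministic function of the Gaussian innovations $(G,G')\sim\mathcal N(0,I_{2d})$; setting $\Phi(G,G'):=S(x_1(G),v_1(G,G'))$ gives $P_{a,b}(z_0)=\Phi_\#\mu_{2d}$, and the task reduces to a pointwise bound $\|\nabla\Phi\|_{\mathrm{op}}^2\le C_1$.

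For the Jacobian I would unfold the five substeps \eqref{eq-decomp-OBABO} to write
\begin{align*}
x_1 &= x_0 + \lambda\tilde{\eta}v_0 + \lambda\sqrt{1-\tilde{\eta}^2}\,G - \tfrac{\lambda^2}{2}\hl(x_0),\\
v_1 &= \tilde{\eta}^2 v_0 - \tfrac{\lambda\tilde{\eta}}{2}\bigl(\hl(x_0)+\hl(x_1)\bigr) + \tilde{\eta}\sqrt{1-\tilde{\eta}^2}\,G + \sqrt{1-\tilde{\eta}^2}\,G',
\end{align*}
and differentiate: $\partial_G x_1=\lambda\sqrt{1-\tilde{\eta}^2}\,I_d$ and $\partial_{G'}x_1=0$, while the chain rule through $\hl(x_1)$ gives
\[\partial_G v_1=\tilde{\eta}\sqrt{1-\tilde{\eta}^2}\bigl(I_d-\tfrac{\lambda^2}{2}\nabla\hl(x_1)\bigr),\qquad \partial_{G'}v_1=\sqrt{1-\tilde{\eta}^2}\,I_d.\]
Composing with the linear map $S(x,v)=(x+bv,\sqrt{a-b^2}\,v)$ produces a $2\times 2$ block Jacobian each of whose entries carries the overall factor $\sqrt{1-\tilde{\eta}^2}$, with deterministic prefactors in $\lambda,\tilde{\eta},b,\sqrt{a-b^2}$ and a single randomness-dependent contribution $\nabla\hl(x_1)$ bounded in operator norm by $M_\lambda$ uniformly.

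The remaining step is a block-operator-norm aggregation. Substituting $a=1/M_\lambda$, $b=1/\gamma$ and invoking the standing assumptions $\gamma\ge 2\sqrt{M_\lambda}$ and $\lambda\le m/(33\gamma^3)$, which imply $\lambda^2 M_\lambda\le 1$, $b^2\le a/4$, and $1-\tilde{\eta}^2\le 1$, each block is bounded by a constant multiple of $\max\{1/\gamma,1/\sqrt{M_\lambda}\}\cdot\sqrt{1-\tilde{\eta}^2}$. Combining the four blocks yields $\|\nabla\Phi\|_{\mathrm{op}}^2\le 8\sqrt{1-\tilde{\eta}^2}=C_1$, which is exactly the advertised constant. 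The main obstacle is tracking the random dependence through $\nabla\hl(x_1)$: once the uniform bound $\|\nabla\hl\|_{\mathrm{op}}\le M_\lambda$ is exploited, the remaining computation is routine block-matrix arithmetic which collapses cleanly thanks to the stepsize constraints.
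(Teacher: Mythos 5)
Your proposal follows the same route as the paper: fix $z$, realize $P_{a,b}(z)$ as the pushforward of $\mathcal N(0,I_{2d})$ through the map $(G,G')\mapsto S\bigl(\Theta(S^{-1}z,G,G')\bigr)$, bound that map's Lipschitz constant using $\|\nabla h_\lambda\|\le M_\lambda$, and invoke the Lipschitz-pushforward stability of LSI for the Gaussian. The paper bounds increments of $\Theta_1,\Theta_2$ directly while you differentiate and bound Jacobian blocks, but these are the same computation and give the same constant $C_1$.
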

\begin{proof}
  \[  \begin{aligned}
&\text { For } x, v, g, g^{\prime} \in \mathbb{R}^d \text {, denote }\\
&\begin{aligned}
& \Theta_1\left(x, v, g, g^{\prime}\right)=x+\lambda\left(\eta v+\sqrt{1-\eta^2} g\right)-\frac{\lambda^2}{2} \hl(x) \\
& \Theta_2\left(x, v, g, g^{\prime}\right)=\eta^2 v-\frac{\lambda \eta}{2}\left(\hl(x)+\hl\left(\Theta_1\left(x, v, g, g^{\prime}\right)\right)\right)+\sqrt{1-\eta^2}\left(\eta g+g^{\prime}\right)
\end{aligned}
\end{aligned}\]
$\Theta=\left(\Theta_1, \Theta_2\right)$. The transition of the OBABO chain is given by $\Theta\left(x_1, v_1\right)= \Theta\left(x_0, v_0, G, G^{\prime}\right)$ with independent $G, G^{\prime} \sim \mathcal{N}\left(0, I_d\right)$.
For fixed $z=(x,v)$ one notices that the OBABO chain is Lipschitz map of $N(0,I_{2d})$ since for G=(g,g')
\[|\Theta_1(z,g_1,g_1')-\Theta_1(z,g_2,g_2')|\leq \lambda \sqrt{1-\eta^2}|g_1-g_2|\]
and \[\begin{aligned}
    |\Theta_2(z,g_1,g_1')-\Theta_2(z,g_2,g_2')|&\leq \frac{\lambda \eta}{2}M_\lambda |\Theta_1(z,g_1,g_1')-\Theta_1(z,g_2,g_2')| +\sqrt{1-\eta^2}\left(\eta |g_1-g_2|+|g_1'-g_2'|\right)
    \\&\leq \left(1+\lambda+\lambda^2 M_\lambda / 2\right) \sqrt{1-\eta^2}|G_1-G_2|
\end{aligned}\]
which shows that $\Theta$ is Lispchtiz with constant $2 \sqrt{1-\eta^2}$
Since $P_{a,b}(z)=S(S^{-1}z,\Theta(G))$ then it is a Lipschitz function of $G$ with constant $|S|_{Lip} \left(1+\lambda+\lambda^2 M_\lambda / 2\right) \sqrt{1-\eta^2}.$
Since the distribution of $G$ satisfies LSI with constant $2$ one obtains the result.
\end{proof}
\begin{lemma}
Assume that $\mu$ satisfies an LSI with constant $C_\mu.$ Then, $\mu P_{a,b}$ also satisfies an LSI
with constant $r^2 C_\mu +C_1$ where $C_1$ is given in the previous Lemma.
\end{lemma}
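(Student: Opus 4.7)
The plan is to invoke the standard additivity of entropy along a Markov kernel and split the $\mu P_{a,b}$--entropy into a conditional (within--fiber) piece and an outer (over $\mu$) piece, then control each piece with the tools already established: the fiberwise LSI of the previous lemma for the inner piece, and the LSI of $\mu$ combined with the gradient contraction $|\nabla P_{a,b}f|\leq r\, P_{a,b}|\nabla f|$ for the outer piece.

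Concretely, for a nonnegative test function $f$ with $\int f\, d(\mu P_{a,b})=1$, I would first write
\[
\operatorname{Ent}_{\mu P_{a,b}}(f)\ =\ \int \operatorname{Ent}_{P_{a,b}(z,\cdot)}(f)\, d\mu(z)\ +\ \operatorname{Ent}_{\mu}\bigl(P_{a,b} f\bigr),
\]
which is just the identity $u\log u - u\log \bar u = \operatorname{Ent}(u) + \bar u\log \bar u$ with $\bar u = P_{a,b}f$, integrated against $\mu$. For the first term, apply the previous lemma pointwise in $z$: since each kernel $P_{a,b}(z,\cdot)$ satisfies LSI with constant $C_1$, one has
\[
\int \operatorname{Ent}_{P_{a,b}(z,\cdot)}(f)\, d\mu(z)\ \leqslant\ \frac{C_1}{2}\int P_{a,b}\!\left(\frac{|\nabla f|^2}{f}\right) d\mu\ =\ \frac{C_1}{2}\int\frac{|\nabla f|^2}{f}\, d(\mu P_{a,b}).
\]

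For the second term, apply the LSI of $\mu$ to the function $P_{a,b}f$ and then use the gradient contraction followed by Cauchy--Schwarz. The chain of inequalities is
\[
\operatorname{Ent}_{\mu}(P_{a,b}f)\ \leqslant\ \frac{C_\mu}{2}\int\frac{|\nabla P_{a,b}f|^2}{P_{a,b}f}\, d\mu\ \leqslant\ \frac{r^2 C_\mu}{2}\int\frac{\bigl(P_{a,b}|\nabla f|\bigr)^2}{P_{a,b}f}\, d\mu,
\]
and the Cauchy--Schwarz inequality for the kernel,
\[
\bigl(P_{a,b}|\nabla f|\bigr)^2\ =\ \left(P_{a,b}\!\left(\sqrt{f}\cdot \frac{|\nabla f|}{\sqrt{f}}\right)\right)^{\!2}\ \leqslant\ P_{a,b}(f)\cdot P_{a,b}\!\left(\frac{|\nabla f|^2}{f}\right),
\]
gives
\[
\operatorname{Ent}_{\mu}(P_{a,b}f)\ \leqslant\ \frac{r^2 C_\mu}{2}\int\frac{|\nabla f|^2}{f}\, d(\mu P_{a,b}).
\]
Adding the two estimates yields the claimed LSI for $\mu P_{a,b}$ with constant $r^2 C_\mu + C_1$.

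The step I expect to be least mechanical is making the first identity legitimate: one must ensure that $P_{a,b}f$ is strictly positive and sufficiently smooth to apply the $\mu$-LSI, and that $P_{a,b}(|\nabla f|^2/f)$ is well defined. A standard density/truncation argument (restrict to bounded Lipschitz $f$ bounded away from $0$, then pass to the limit via the usual LSI approximation) handles this, so the genuine mathematical content is the Cauchy--Schwarz transfer of the gradient through the kernel combined with the contraction factor $r$, which is exactly what delivers the $r^2 C_\mu$ contribution instead of $C_\mu$.
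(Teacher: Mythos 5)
Your proof is correct and reconstructs, from first principles, the argument that the paper delegates to Theorem 23 of Monmarch\'e (2021). The three ingredients you use---the entropy decomposition along the kernel, the fiberwise LSI (constant $C_1$) for the inner term, and the $\mu$-LSI combined with gradient contraction $|\nabla P_{a,b}f|\le r\,P_{a,b}|\nabla f|$ and kernel Cauchy--Schwarz for the outer term---are exactly the ones that cited proof is built on.
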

\begin{proof}
    See proof of Theorem 23, point 1, in \cite{monmarche2021high}.
\end{proof}
\begin{theorem}
    Let $z_0=(x_0,v_0)$ be the initial conditions such that $\mathcal{L}(S(z_0))$ satisfies and LSI with constant $C_{LSI,0}$ 
  Then $\mathcal{L}(S(x_n,v_n))$ satisfies an LSI with \[C_{LSI,n}\leq r^{2n} C_{LSI,0} + \frac{1-r^{2n}}{1-r^{2}}C_1\]
  where $r:=\sqrt{1-\lambda \kappa}.$
\end{theorem}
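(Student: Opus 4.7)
The plan is to prove the theorem by a straightforward induction on $n$, using the previous lemma (the one-step propagation of the Log-Sobolev inequality through $P_{a,b}$) as the engine. The base case $n=0$ is exactly the hypothesis that $\mathcal{L}(S(z_0))$ satisfies an LSI with constant $C_{LSI,0}$. For the inductive step, I would observe that by the definition of the Markov semigroup $P_{a,b}$ acting on laws of $S(z)$, we have $\mathcal{L}(S(z_{n+1})) = \mathcal{L}(S(z_n)) \, P_{a,b}$, so applying the preceding lemma with $\mu = \mathcal{L}(S(z_n))$ gives the one-step recursion
\[
C_{LSI,n+1} \ \leq \ r^2 \, C_{LSI,n} + C_1.
\]

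From here the claim reduces to solving this affine recursion. Unrolling it $n$ times produces
\[
C_{LSI,n} \ \leq \ r^{2n}\, C_{LSI,0} + C_1 \sum_{k=0}^{n-1} r^{2k} \ = \ r^{2n}\, C_{LSI,0} + \frac{1-r^{2n}}{1-r^{2}}\, C_1,
\]
which is exactly the bound stated in the theorem. The only point requiring a small bit of care is checking that the contraction constant $r = \sqrt{1-\lambda\kappa}$ satisfies $r < 1$, so that the geometric sum is well-defined and bounded; this is automatic from the choice of parameters $\lambda$ and $\kappa$ made in Proposition \ref{controction-OBABO}.

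There is no real obstacle here: the difficult work has already been carried out in the preceding three lemmas (the Kuwada-type gradient bound derived from the almost sure contraction in $\|\cdot\|_{a,b}$, the one-step Gaussian LSI obtained by representing $\Theta$ as a Lipschitz map of a standard Gaussian, and the combination of these two ingredients into the additive one-step LSI propagation). Consequently the theorem is essentially a bookkeeping corollary: write one display for the base case, one for the induction step invoking the previous lemma, and one summing the resulting geometric series.
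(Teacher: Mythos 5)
Your proposal is correct and takes essentially the same route as the paper: both treat the theorem as a direct induction using the one-step lemma $C_{LSI,n+1}\leq r^2 C_{LSI,n}+C_1$, then unroll the affine recursion into a geometric sum. The paper's proof is even terser (it simply invokes the previous lemma "inductively"), so your explicit unrolling and the remark that $r<1$ under the stepsize restrictions of Proposition \ref{controction-OBABO} add nothing beyond bookkeeping that the paper leaves implicit.
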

\begin{proof}
    Since $S$ is Lipschitz it is easy to see that $S(z_0)$ satisfies an LSI with constant $|S|C_{LSI,0}.$
    Consequently, using the statement of the previous Lemma inductively, the result follows easily.
\end{proof}
\subsection{Proof of Log-Sobolev inequality for the exponential Euler scheme}
We notice that the mean of the next iterate of the tamed exponential euler scheme started at $(x,v)$ is given by
$$
\begin{aligned}
F(x, v):=\left(x+\frac{1-\exp (-\gamma \lambda)}{\gamma} v-\right. & \frac{h-\gamma^{-1}(1-\exp (-\gamma \lambda))}{\gamma} \hl(x), \\
& \left.\exp (-\gamma h) v-\frac{1-\exp (-\gamma h)}{\gamma} \hl(x)\right) .
\end{aligned}
$$
We make a convenient change of coordinates \[(\phi,\psi)=M(x,v)=(x,x+\frac{2}{\gamma}v).\]
In the new coordianates,the mean of the next iteration of the scheme is given by $$\bar{F}=
M \circ F \circ \mathcal{M}^{-1} .
$$
In the new coordinates $$
\left(\phi_{(k+1) h}, \psi_{(k+1) h}\right) \stackrel{d}{=} \bar{F}\left(\phi_{k h}, \psi_{k h}\right)+\mathcal{N}\left(0, \mathcal{M} \Sigma \mathcal{M}^{\top}\right).
$$
We will show that $\bar{F}$ is a contraction, so the law of the next iterates will be a pushforward contractive map convoluted with a normal random variable.
For the Gaussian writing $\mathcal{M} \Sigma \mathcal{M}^{\top}=\Sigma \otimes I_d$, we can compute

$$
\begin{aligned}
& \bar{\Sigma}_{1,1}=\frac{2 \lambda}{\gamma}-\frac{3}{\gamma^2}+\frac{4 \exp (-\gamma \lambda)}{\gamma^2}-\frac{\exp (-2 \gamma \lambda)}{\gamma^2}=O(\gamma \lambda^3), \\
& \bar{\Sigma}_{1,2}=\frac{2 \lambda}{\gamma}-\frac{1}{\gamma^2}+\frac{\exp (-2 \gamma \lambda)}{\gamma^2}=O(\lambda^2), \\
& \bar{\Sigma}_{2,2}=\frac{2 \lambda}{\gamma}+\frac{5}{\gamma^2}-\frac{8 \exp (-\gamma \lambda)}{\gamma^2}+\frac{3 \exp (-2 \gamma \lambda)}{\gamma^2}=\frac{4 \lambda}{\gamma^2}+O(\lambda^2) .
\end{aligned}
$$

We conclude that

$$
\|\bar{\Sigma}\|_{{op}} \leq \frac{4 \lambda}{\gamma}+O(\lambda^2)
$$
As a result, \begin{equation}\label{lsi-gauss}
    C_{{LSI}}(\mathcal{N}(0, \mathcal{M} \Sigma \mathcal{M}^{\top}\lambda)) \leq \frac{4 \lambda}{\gamma}+O(\lambda^2)
\end{equation}.
\begin{lemma}
    The map $\bar{F}$ is Lipschitz with constant $||\bar{F}||_{lip}\leq1-c_0\frac{m}{\sqrt{\gamma}}\lambda  $ for some $c_0>0$ absolute constant.
\end{lemma}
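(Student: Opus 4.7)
The plan is to reduce this Lipschitz estimate to the spectral analysis of a $2\times 2$ matrix. Substituting $M^{-1}(\phi,\psi) = (\phi, \tfrac{\gamma}{2}(\psi-\phi))$ into $F$ and then applying $M$, a direct computation yields the explicit formula
\begin{equation*}
\bar{F}(\phi,\psi) \ = \ \Bigl( \bigl(1-\tfrac{\beta\gamma}{2}\bigr)\phi + \tfrac{\beta\gamma}{2}\psi - \psi_2 h_\lambda(\phi),\ \bigl(1-\tfrac{\beta\gamma}{2}-\alpha\bigr)\phi + \bigl(\alpha+\tfrac{\beta\gamma}{2}\bigr)\psi - \bigl(\psi_2+\tfrac{2\beta}{\gamma}\bigr) h_\lambda(\phi) \Bigr),
\end{equation*}
where $\alpha := \psi_0(\lambda) = e^{-\gamma\lambda}$ and $\beta := \psi_1(\lambda)$. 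Using the mean-value representation $h_\lambda(\phi_1)-h_\lambda(\phi_2) = Q(\phi_1-\phi_2)$ with $mI_d \preceq Q \preceq M_\lambda I_d$ (exactly as in the contraction proof of Proposition~1), the difference $\bar{F}(\phi_1,\psi_1)-\bar{F}(\phi_2,\psi_2)$ becomes a linear operator $J(Q)$ applied to $(\Delta\phi,\Delta\psi)^\top$, where every block of $J(Q)$ is polynomial in $Q$.

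Because all blocks of $J(Q)$ commute with $Q$, I would diagonalize $Q$ and reduce $\|J(Q)\|_{\mathrm{op}}$ to $\sup_{\mu\in[m,M_\lambda]}\|J_\mu\|_{\mathrm{op}}$, where $J_\mu$ is the scalar $2\times 2$ matrix obtained by substituting $\mu$ for $Q$. Taylor expanding in $\gamma\lambda$ gives $\alpha = 1 - \gamma\lambda + (\gamma\lambda)^2/2 + O((\gamma\lambda)^3)$, $\beta\gamma = \gamma\lambda - (\gamma\lambda)^2/2 + O((\gamma\lambda)^3)$ and $\psi_2 = \lambda^2/2 + O(\gamma\lambda^3)$, so the leading-order block reads
\begin{equation*}
J_\mu \ \simeq\ \begin{pmatrix} 1-\gamma\lambda/2 - \lambda^2\mu/2 & \gamma\lambda/2 \\ \gamma\lambda/2 - 2\lambda\mu/\gamma & 1-\gamma\lambda/2 \end{pmatrix}.
\end{equation*}
At $\mu = 0$ this matrix has spectrum $\{1,1-\gamma\lambda\}$ with eigenvectors $(1,1)^\top$ and $(1,-1)^\top$, so $\|J_0\|_{\mathrm{op}} = 1$ and the contraction must arise entirely from the $\mu\geq m > 0$ perturbation. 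Testing these directions shows that $(1,1)^\top$ is squeezed at rate $1 - 2\lambda\mu/\gamma + O(\lambda^2)$ while $(1,-1)^\top$ is squeezed at the much faster rate $1 - \gamma\lambda + O((\gamma\lambda)^2)$. Applying the explicit $2\times 2$ eigenvalue formula to $J_\mu^\top J_\mu$ then yields $\|J_\mu\|_{\mathrm{op}}^2 \leq 1 - c_0 \lambda m/\gamma$ uniformly in $\mu$, and hence the claimed Lipschitz bound.

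The main obstacle is precisely that $J_0$ is already norm-one: the contraction is a purely perturbative effect of the strong monotonicity $m > 0$, and it is not uniform across directions. Cross terms of order $\lambda^2\mu$ and $\lambda^2\mu^2/\gamma^2$ appearing in $J_\mu^\top J_\mu$ could, a priori, wipe out the gain $-2\lambda\mu/\gamma$ along the $(1,1)$ mode. Controlling them requires the standing conditions $\gamma \gtrsim \sqrt{M_\lambda}$ (which gives $\mu/\gamma^2 \leq M_\lambda/\gamma^2 \lesssim 1$) together with $\lambda\gamma \lesssim 1$, both of which turn these corrections into genuine higher-order terms. Once this bookkeeping is carried out, the desired estimate $\|\bar{F}\|_{\mathrm{Lip}} \leq 1 - c_0 \lambda m/\gamma$ follows; the stated $\sqrt{\gamma}$ rate is then a weaker consequence (absorbing a power of $\gamma\lambda \leq 1$ into the constant).
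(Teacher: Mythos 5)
Your proposal follows essentially the same route as the paper: compute the Jacobian of $\bar{F}$, diagonalize the Hessian $\nabla h_\lambda$, and bound the operator norm of the resulting scalar $2\times 2$ matrix to obtain a contraction constant of the form $1 - c_0 m\lambda/\gamma$. One small correction to your closing remark: the $\sqrt{\gamma}$ appearing in the lemma's statement is not a weaker consequence of the $\gamma^{-1}$ rate you (and the paper) actually derive, since $\gamma$ is allowed to grow (indeed $\gamma \geq 5/\lambda^{1/4}$) so $m\lambda/\gamma$ cannot be bounded below by $c\, m\lambda/\sqrt{\gamma}$; the $\sqrt{\gamma}$ in the statement appears to be a typo, as both the paper's proof and the downstream Log-Sobolev theorem use $\gamma^{-1}$.
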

\begin{proof}
    The map $\bar{F}$ can be expliclty written as 
    \[\begin{aligned}
\bar{F}(\phi, \psi)= & \left(\phi+\frac{1-\exp (-\gamma \lambda)}{2}(\psi-\phi)-\frac{\lambda-\gamma^{-1}(1-\exp (-\gamma \lambda))}{\gamma} \hl(\phi),\right. \\
& \left.\phi+\frac{1+\exp (-\gamma \lambda)}{2}(\psi-\phi)-\frac{\lambda+\gamma^{-1}(1-\exp (-\gamma \lambda))}{\gamma} \hl(\phi)\right) .
\end{aligned}\]
It is easy to see that since $\hl$ is Lipschitz which implies that it is almost everywhere differentiable, then $\bar{F}$ is also almost everywhere differentiable and continuous everywhere.
For every $\phi$ that $\hl$ is differentiable, 
one calculates
\begin{multline*}
\nabla\bar{F}(\phi, \psi)
    =
\begin{pmatrix}
\partial_\phi \bar{F}_\phi & \partial_\phi \bar{F}_\psi \\
\partial_\psi \bar{F}_\phi & \partial_\psi \bar{F}_\psi
\end{pmatrix}
    \\
    =
\begin{pmatrix}
\frac{1 + e^{-\gamma \lambda}}{2} I_d - \frac{\lambda - \gamma^{-1}(1 - e^{-\gamma \lambda})}{\gamma} \nabla\hl(\phi) &
\frac{1 - e^{-\gamma \lambda}}{2} I_d - \frac{\lambda + \gamma^{-1}(1 - e^{-\gamma \lambda})}{\gamma} \nabla\hl(\phi) \\
\frac{1 - e^{-\gamma \lambda}}{2} I_d &
\frac{1 + e^{-\gamma \lambda}}{2} I_d
\end{pmatrix}.
\end{multline*}
which can be further split as 
\begin{multline*}
\nabla\bar{F}(\phi, \psi) = A+B
    \\
    =\frac{1}{2} \begin{pmatrix}
(1+\eta)I_d & (1-\eta)I_d -b\nabla \hl(\phi)\\(1-\eta)I_d &(1+\eta)I_d
\end{pmatrix} + \begin{pmatrix}
    \frac{\lambda - \gamma^{-1}(1 - e^{-\gamma \lambda})}{\gamma} \nabla\hl(\phi) & 0_d \\ 0_d &0_d
\end{pmatrix}
\end{multline*}
where $\eta=e^{-\lambda \gamma}$ and $b=\frac{\lambda + \gamma^{-1}(1 - e^{-\gamma \lambda})}{\gamma} $
Trivially the operator norm of $B$ is $\mathcal{O}(\lambda^2 M_\lambda)$ so we turn to attention to the operator norm of $A$.
Since \begin{equation}
    A^TA= \frac{1}{4}\left(\begin{array}{cc}
2\left(1+\eta^2\right) I_d-2 b(1-\eta) b^2 H^2 & 2\left(1-\eta^2\right) I_d-(1+\eta) b H \\
2\left(1-\eta^2\right) I_d-(1+\eta) b H & 2\left(1+\eta^2\right) I_d
\end{array}\right) .
\end{equation}
which is equal to \[\begin{pmatrix}
    2\left(1+\eta^2\right) I_d & 2\left(1-\eta^2\right) I_d-(1+\eta) b H \\
2\left(1-\eta^2\right) I_d-(1+\eta) b H & 2\left(1+\eta^2\right) I_d
\end{pmatrix} + \begin{pmatrix}
    -2 b(1-\eta) b^2 H^2 & 0_d\\0_d & 0_d
\end{pmatrix}\]
The eignevalues of the first matrix are given by $\frac{1}{4}\left(2(1+\eta^2)+2(1-\eta^2)-(1+\eta)b\eig(H)\right)$ or 
$\frac{1}{4}\left(2(1+\eta^2)-(2(1-\eta^2)-(1+\eta)b\eig(H))\right)$
so the operator norm of the first matrix can be bounded by $\frac{1}{4}\max\{ 4\eta^2+(1+\eta)bM_\lambda, 4-(1+\eta)bm\}$
while the operator norm of the second matrix is trivially given by $2b^3(1-\eta)M_l^2\leq \mathcal{O}(\lambda^3M_\lambda).$
As a result $||A||_2=\sqrt{eig_{max}(A^TA)}\leq \sqrt{1-(1+\eta)bm}$
which yields that 
\[||\nabla \bar{F}||\leq \sqrt{1-c_0 \gamma^{-1}\lambda m}+\mathcal{O}(\lambda^2M_\lambda)\leq 1-\frac{c_0m}{4}\gamma^{-1}\lambda\] almost everywhere. Since $\bar{F}$ is continuous and differentiable almost everywhere then it is Lipschitz.
\end{proof}
\begin{corollary}
If the initial condition of the algorithm satisfies an LSI then $M(x_n,v_n)$ satisfy an LSI. 
\end{corollary}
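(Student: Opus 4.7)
The plan is to prove this by induction on $n$, combining the Lipschitz contraction of $\bar F$ established in the preceding lemma with two standard stability properties of log-Sobolev inequalities: (a) if $T\colon \R^d \to \R^d$ is $L$-Lipschitz and $\mu$ satisfies an LSI with constant $C$, then the pushforward $T_{\#}\mu$ satisfies an LSI with constant $L^2 C$; and (b) if $\mu_1, \mu_2$ satisfy LSI with constants $C_1,C_2$, then the convolution $\mu_1 * \mu_2$ satisfies an LSI with constant $C_1 + C_2$. Both facts are classical in the log-Sobolev literature.

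In the $(\phi,\psi)$-coordinates the one-step law of the scheme factors as
\[\mathcal{L}\bigl(M(z_{n+1})\bigr) \ = \ \bar F_{\#}\mathcal{L}\bigl(M(z_n)\bigr) \,*\, \mathcal{N}(0,\,\mathcal M \Sigma \mathcal M^{\top}),\]
because the Gaussian increment is independent of $M(z_n)$. By the preceding lemma $\bar F$ is $r$-Lipschitz with $r \leq 1 - c_0 m\lambda/(4\gamma)$, and by \eqref{lsi-gauss} the Gaussian factor satisfies an LSI with constant at most $4\lambda/\gamma + \mathcal{O}(\lambda^2)$. Applying (a) and then (b) to the above factorization yields the one-step recursion
\[C_{LSI,n+1} \ \leq \ r^2 C_{LSI,n} + \frac{4\lambda}{\gamma} + \mathcal{O}(\lambda^2).\]

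Iterating this recursion gives the geometric bound
\[C_{LSI,n} \ \leq \ r^{2n} C_{LSI,0}' + \frac{1-r^{2n}}{1-r^2}\left(\frac{4\lambda}{\gamma}+\mathcal{O}(\lambda^2)\right),\]
which is finite for every $n$ and uniformly bounded as $n\to\infty$, so $\mathcal{L}(M(z_n))$ indeed satisfies an LSI. To close the base case, I first note that $M$ is a linear invertible map with finite operator norm $\|M\| \leq 1 + 2/\gamma$, so fact (a) applied to the hypothesis that $\mathcal{L}(z_0)$ satisfies an LSI with constant $C_{LSI,0}$ gives $C_{LSI,0}' \leq \|M\|^2 C_{LSI,0}$, and the induction launches. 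Rearranging the geometric factor using $r^2 \leq 1 - c_0 m \lambda/(2\gamma)$ for $\lambda$ small enough reproduces the explicit form claimed in the main-results section.

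The main obstacle in the whole argument was not this corollary itself but the preceding Lipschitz-contraction lemma for $\bar F$, which required a careful spectral analysis of $\nabla \bar F$ together with the monotonicity and Lipschitz properties of the tamed drift $h_\lambda$; once that is in hand, the LSI propagation is essentially a mechanical application of (a) and (b). A minor subtlety worth double-checking is that the $\mathcal{O}(\lambda^2)$ remainder from the Gaussian LSI constant and the lower-order contribution to $r^2$ combine to keep the recursion contractive for all sufficiently small $\lambda$, which is exactly the regime in which the preceding lemma is stated.
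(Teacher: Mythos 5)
Your argument matches the paper's proof: the one-step law in $(\phi,\psi)$-coordinates factors as the pushforward by the contraction $\bar{F}$ convoluted with an independent Gaussian, and iterating the standard LSI stability facts under Lipschitz maps and convolutions gives the geometric bound. Your version also cleanly handles the base-case transfer through the linear map $M$ and correctly uses the squared Lipschitz constant $r^{2}$ in the one-step recursion, whereas the paper's displayed formula appears to drop the square (writing powers of $\|\bar{F}\|_{lip}$ rather than of $\|\bar{F}\|_{lip}^{2}$, though its stated theorem is consistent with the squared version).
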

\begin{proof}
    Since in the new coordinates\[(\phi_{n+1},\psi_{n+1})=\bar{F}(\phi_n,\psi_n)+N(0,\bar{\Sigma})\]
    where the normal distribution is independent, by the behaviour of the log-Sobolev inequality under Lipschitz maps and convolutions using the fact that $\bar{F}$ is a contraction by the previous lemma one obtains
    \[C_{LSI,{n+1}}\leq (||\bar{F}||_{lip})^n C_{LSI,0} + \sum_{k=1}^n ||\bar{F}||^k_{lip} C_{LSI} (\mathcal{N}(0,{\bar{\Sigma}})).\]
    Substitung the expressions from the previous lemma and \eqref{lsi-gauss} yields the result.
\end{proof}

\begin{proof}[Proof of Lemma \ref{remarkgridpoints}]
For simplicity we prove that $\mathcal{L}(\bar{Y}^\lambda_1,\bar{x}^\lambda_1)=\mathcal{L}(\Tilde{Y}_\lambda,\Tilde{x}_\lambda)$.\\
     Expanding $\Tilde{Y}_\lambda$ one notices that
     \begin{equation}\label{eq-tildey}
     \begin{aligned}
         \Tilde{Y}_\lambda &=e^{-\lambda \gamma} \Tilde{Y}_0-e^{-\lambda \gamma}\hl(\Tilde{x}_0)\int_0^\lambda e^{\gamma s}ds +\sqrt{2\gamma }e^{-\lambda \gamma}\int_0^\lambda e^{\gamma s} dW_s
         \\&=\psi_0(\lambda) \Tilde{Y}_0 -e^{-\lambda \gamma}\hl(\Tilde{x}_0) \frac{1}{\gamma}(e^{\lambda \gamma}-1)+\sqrt{2\gamma }e^{-\lambda \gamma}\int_0^\lambda e^{\gamma s} dW_s
         \\&=\psi_0(\lambda) \Tilde{Y}_0 -\hl(\Tilde{x}_0) \int_0^\lambda \psi_0(s) ds +\sqrt{2\gamma }e^{-\lambda \gamma}\int_0^\lambda e^{\gamma s} dW_s
         \\&=\psi_0(\lambda) \Tilde{Y}_0 -\psi_1(\lambda)\hl(\Tilde{x}_0)   +\sqrt{2\gamma }e^{-\lambda \gamma}\int_0^\lambda e^{\gamma s} dW_s
     \end{aligned}
     \end{equation}
     One notices that $e^{-\lambda \gamma}\int_0^\lambda e^{\gamma s} dW_s $ follows a zero mean Gaussian distribution and
\[\begin{aligned}
\E (e^{-\lambda \gamma}\int_0^\lambda e^{\gamma s} dW_s) (e^{-\lambda \gamma}\int_0^\lambda e^{\gamma s} dW_s) ^T&=e^{-2\lambda \gamma } \int_0^\lambda e^{2\gamma s}ds I_d\\&=\frac{1}{2\gamma}e^{-2\lambda \gamma}(e^{2\lambda \gamma}-1) I_d\\&=\frac{1}{2\gamma}(1-e^{-2\lambda \gamma})  I_d\\&=(\int_0^\lambda e^{-2\gamma s}ds) I_d\\&=(\int_0^\lambda \psi_0^2(s) ds) I_d
\end{aligned}
     \]
     For the second process, using the definition $\tilde{x}$ and the recurrent relationship for $\psi_i,$
  \begin{equation}\label{eq-tildex}
      \begin{aligned}
          \Tilde{x}_\lambda
            &=\Tilde{x}_0+ \int_0^\lambda \psi_0(s) ds\Tilde{Y_0} -\hl(\Tilde{x}_0)\int_0^\lambda \int_0^s e^{-\gamma(s-u)} du ds
          \\
          &\qquad
          +\sqrt{2\gamma } \int_0^\lambda e^{-\gamma t}\int_0^t e^{\gamma s} dW_s dt
          \\&=\Tilde{x}_0 +\psi_1(\lambda)\Tilde{Y_0}-\hl(\Tilde{x}_0)\frac{1}{\gamma}\int_0^\lambda e^{-\gamma s}(e^{\gamma s}-1)  ds+\sqrt{2\gamma } \int_0^\lambda e^{-\gamma t}\int_0^t e^{\gamma s} dW_s dt
          \\&=\Tilde{x}_0 +\psi_1(\lambda)\Tilde{Y_0}-\hl(\Tilde{x}_0)\int_0^\lambda \psi_1(s) ds+\sqrt{2\gamma } \int_0^\lambda e^{-\gamma t}\int_0^t e^{\gamma s} dW_s dt
          \\&=\Tilde{x}_0 +\psi_1(\lambda)\Tilde{Y_0}-\psi_2(\lambda)\hl(\Tilde{x}_0)+\sqrt{2\gamma } \int_0^\lambda e^{-\gamma t}M_t dt
      \end{aligned}
  \end{equation}
  where $M_t:= \int_0^t e^{\gamma s} dW_s$.
     Since  \[d (e^{-\gamma t}M_t)= -\gamma e^{-\gamma t} M_t dt +e^{-\gamma t} dM_t= -\gamma e^{-\gamma t} M_t dt + dW_t \] or directly by Ito's formula,
     \[e^{-\gamma t} M_t=-\gamma\int_0^t e^{-\gamma s} M_s ds + W_t\]
    setting $t=\lambda$, one deduces that
     \begin{equation}
    \int_0^\lambda e^{-\gamma t} M_t dt =\frac{1}{\gamma} \int_0^\lambda (1-e^{\gamma (t-\lambda)}) dW_t
     \end{equation}
     which is a zero-mean Gaussian distribution with
     \[\E ( \int_0^\lambda e^{-\gamma t} M_t dt)( \int_0^\lambda e^{-\gamma t} M_t dt)^T=\frac{1}{\gamma^2}\left(\int_0^\lambda (1-e^{-\gamma t})^2 dt\right) I_d=\left(\int_0^\lambda \psi_1^2(t)dt\right)I_d.\]
     Finally,
     \begin{multline*}
         \E \left(e^{-\lambda \gamma} \int_0^\lambda e^{\gamma s} dWs\right)\left(\frac{1}{\gamma}\int_0 (1-e^{\gamma (s-\lambda)}) dW_s\right)^T
         \\
         = \frac{1}{\gamma}\left(\int_0^\lambda e^{\gamma (s-\lambda) }(1-e^{\gamma (s-\lambda)}) ds\right ) I_d
         =\int_0^\lambda \psi_0(s)\psi_1(s)ds \quad I_d
     \end{multline*}
     We have proved that the terms $\int_0^\lambda e^{-\gamma t} M_t dt$ appearing in \eqref{eq-tildex} and
     $e^{-\lambda \gamma} \int_0^\lambda e^{\gamma s}dW_s$ in \eqref{eq-tildey} follow Gaussian distributions with the required cross-covariance matrix.\\
     The result immediately follows.
     \end{proof}

\section{Convergence of the Monotonic polygonal Stochastic exponential scheme}

\begin{definition}\label{cont-timeint-X}
        Let $n\in \mathbb{N}$. Let a random variable $X$.
        We define for $t\in [n\lambda,(n+1)\lambda]$
        \[\z_t=(\x_t,\V_t)\] the one step movement of an algorithm started at a random variable $X$.
\end{definition}
\begin{lemma}\label{Inv mom}
Let $p\geq 2$ and let $Y$ be a random variable such that $\mathcal{L}(Y)=\pi.$ Then
\[ \E|Y|^p\leq   2^{p-1} ((\frac{d}{ m})^{\frac{p}{2}} (1+p/d)^{\frac{p}{2}-1} +\left (\frac{2}{m} (u(0) +d) \right )^{p/2}:= C_{\pi,p}. \]
\end{lemma}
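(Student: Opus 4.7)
The target is a moment bound for $Y\sim\pi\propto e^{-u}$ under the strong log-concavity implied by Assumption~\ref{A1-Ham}. The plan is to decompose $\E|Y|^p$ around the mode $x^\star:=\arg\min u$, which exists and is unique with $\na u(x^\star)=0$, and then handle the two resulting pieces separately. Using the elementary inequality $(a+b)^p\leq 2^{p-1}(a^p+b^p)$,
\[\E|Y|^p\ \leq\ 2^{p-1}\po \E|Y-x^\star|^p+|x^\star|^p\pf,\]
so it suffices to bound the concentration term $\E|Y-x^\star|^p$ and the mode term $|x^\star|^p$ separately, matching them with the two summands of $C_{\pi,p}$.

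For the concentration term, I would run the standard integration-by-parts induction on the exponent. Applying the Stein identity $\E[\na\cdot\phi(Y)]=\E[\langle\phi(Y),\na u(Y)\rangle]$ (which follows from $\pi\propto e^{-u}$ by integration by parts) to the field $\phi(y)=|y-x^\star|^{p-2}(y-x^\star)$, whose divergence is $(d+p-2)|y-x^\star|^{p-2}$, and combining with strong monotonicity of $\na u$ (constant $2m$) together with $\na u(x^\star)=0$, one obtains $\langle Y-x^\star,\na u(Y)\rangle\geq 2m|Y-x^\star|^2$ and thus the recursion $\E|Y-x^\star|^p\leq\tfrac{d+p-2}{2m}\E|Y-x^\star|^{p-2}$. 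Iterating from $p=0$ yields, for even $p$, $\E|Y-x^\star|^p\leq (2m)^{-p/2}\prod_{k=1}^{p/2}(d+2k-2)\leq (d/m)^{p/2}(1+p/d)^{p/2-1}$; odd $p$ are reduced to the even case via Holder's inequality at the cost of an absorbable constant.

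For the mode term, $2m$-strong convexity evaluated at $x^\star$ and $0$ together with $\na u(x^\star)=0$ gives $m|x^\star|^2\leq u(0)-u(x^\star)$, so what remains is to bound $-u(x^\star)$. The natural route is to use the identity $\E[\langle\na u(Y),Y-x^\star\rangle]=d$ (a second application of Stein) together with convexity $u(x^\star)\geq u(Y)+\langle\na u(Y),x^\star-Y\rangle$ to obtain, after taking expectations, $\E[u(Y)]\leq u(x^\star)+d$. Using the additive freedom of $u$ to normalize so that $u\geq 0$ (which is harmless since $\pi$ does not see additive constants), one has $\E[u(Y)]\geq 0$ and hence $-u(x^\star)\leq d$, whence $|x^\star|^2\leq \tfrac{1}{m}(u(0)+d)\leq \tfrac{2}{m}(u(0)+d)$.

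Assembling the decomposition with these two estimates, raised to the power $p/2$ respectively, produces the claimed $C_{\pi,p}$. The moment bound around the mode is entirely routine once the Stein identity is in place; the delicate step is the mode bound, where the additive freedom of $u$ has to be handled carefully in order to extract the $u(0)+d$ form. This is the most likely source of difficulty --- it may need either an explicit normalization convention on $u$ or a sharpened Polyak--Lojasiewicz-type inequality at the origin to avoid hiding the issue behind the reparametrisation $u\mapsto u-\min u$.
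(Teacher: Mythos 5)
Your proof is essentially correct, but it takes a genuinely different route from the paper. The paper decomposes $\E|Y|^p \le 2^{p-1}\bigl(\E|Y-\E Y|^p + (\E|Y|)^p\bigr)$ around the \emph{mean} and handles the two pieces with two imported results: the centered moment is controlled by Hargé's Gaussian comparison theorem (Lemma~\ref{harge}), after writing $e^{-u}$ as a log-concave perturbation of $\mathcal N(0,\tfrac1m I_d)$ using only $\nabla^2 u\succeq m I_d$; and $\E|Y|$ is controlled by Lemma~\ref{inv-measure bound}, which in turn rests on Raginsky et al.'s Lyapunov bound for the overdamped SDE. You instead decompose around the \emph{mode} $x^\star$ and replace both imported ingredients with elementary arguments: an integration-by-parts (Stein) recursion $\E|Y-x^\star|^p\le\tfrac{d+p-2}{2m}\E|Y-x^\star|^{p-2}$ that even gives a slightly sharper constant than the paper needs, and a direct $2m$-strong-convexity estimate for $|x^\star|$. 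This is attractive because it is self-contained and avoids Hargé entirely, but two small gaps remain. First, your recursion naturally handles even integer $p$; the reduction for odd or real $p\ge 2$ via Hölder is plausible but left unchecked, whereas Hargé applies to any convex $g$ with no case analysis. Second, your bound on $-u(x^\star)$ via a second Stein identity plus the normalization $u\ge 0$ is circular: once you normalize $\min u=0$, you already have $-u(x^\star)=0\le d$ and the Stein step is redundant. To be fair, the paper's Lemma~\ref{inv-measure bound} also quietly relies on a normalization convention of the same flavor (a dissipativity constant expressed through $u(0)$), so you have correctly identified a real subtlety shared by both proofs rather than introduced a new one — but you should either drop the Stein step for the mode term and state the normalization assumption explicitly, or drop the normalization and keep only the inequality $m|x^\star|^2\le u(0)-u(x^\star)$, noting it matches the paper's $\tfrac{2}{m}(u(0)+d)$ only after the same convention.
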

    \begin{proof}
Since $e^{- u(x)}=e^{-( u- \frac{m}{2}|x|^2)} e^{- \frac{m}{2}|x|^2}  $ and since the function $u-\frac{m}{2}|x|^2$ is convex (due to strong convexity of $u$) the assumptions of Theorem \ref{harge} are valid for $X$ which has distribution $\mathcal{N}(0,\frac{1}{ m}I_d)$, and for $Y$ with density $\pi$. Since the function $g: x\rightarrow |x|^p$ is convex, applying the results of Theorem \ref{harge} leads to
\begin{equation}\label{pol mom}
    \E |Y-\E Y|^p\leq \E |X|^p=(\frac{d}{ m})^{\frac{p}{2}} \frac{\Gamma((d+p)/2)}{\Gamma(d/2)(d/2)\frac{p}{2}}\leq (\frac{d}{ m})^{\frac{p}{2}} (1+p/d)^{\frac{p}{2}-1}.
\end{equation}
Combining \eqref{pol mom} and the result of Lemma \ref{inv-measure bound} yields
\begin{equation}
    \E|Y|^p\leq 2^{p-1}\left( \E|Y-\E Y|^p + (\E |Y|)^p\right)\leq 2^{p-1} ((\frac{d}{ m})^{\frac{p}{2}} (1+p/d)^{\frac{p}{2}-1} +\left (\frac{2}{m} (u(0) +\frac{d}{}) \right )^{p/2},
\end{equation}
which completes the proof.
\end{proof}
Since $\Pi$ is the product of $\pi$ and a standard Gaussian the following result follows easily.
\begin{lemma}
    Let $X$ such that $\mathcal{L}(X)=\Pi$. Let $p>0.$ Then, 
    \[\E|X|^{2p}\leq C_{\pi,p}\]
\end{lemma}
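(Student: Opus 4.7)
The plan is to exploit the explicit product structure of the invariant measure: by \eqref{eq-pibeta} the density of $\Pi$ factorizes as $\Pi(\mathrm{d}\theta,\mathrm{d}v) \propto e^{-u(\theta)}\mathrm{d}\theta \otimes e^{-|v|^2/2}\mathrm{d}v$, so writing $X=(\theta,V)$ we have $\theta\sim\pi$ and $V\sim\mathcal{N}(0,I_d)$ independently. This reduces the question to bounding the two marginal moments separately and recombining them.

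First, I would use the identity $|X|^2=|\theta|^2+|V|^2$ together with the elementary convexity inequality $(a+b)^p\leq 2^{(p-1)_+}(a^p+b^p)$ to write
\[\E|X|^{2p}\ \leq\ 2^{(p-1)_+}\bigl(\E|\theta|^{2p}+\E|V|^{2p}\bigr).\]
The first term is handled immediately by the previous lemma (Lemma \ref{Inv mom}) applied with exponent $2p$ in place of $p$: it yields $\E|\theta|^{2p}\leq C_{\pi,2p}$, a constant with explicit polynomial dependence on $d$ via the two summands appearing in the definition of $C_{\pi,2p}$. The second term is a standard chi-squared moment: $|V|^2\sim\chi^2_d$, so $\E|V|^{2p} = 2^p\Gamma(d/2+p)/\Gamma(d/2)$, which is bounded above by a quantity of the form $(d+2p)^p$ up to an absolute constant.

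Combining these two bounds and absorbing everything into a single constant, one obtains the stated inequality (with the understanding that the constant $C_{\pi,p}$ on the right-hand side is redefined to include both contributions and hence depends polynomially on $d$, $p$ and on $m$, $u(0)$ through $C_{\pi,2p}$, consistently with the rest of the paper). The only real subtlety is bookkeeping: strictly speaking the statement writes $C_{\pi,p}$ but what is meant is a constant of the same qualitative form as $C_{\pi,2p}$ enlarged by the Gaussian contribution, so the proof amounts to noting the factorization and invoking the preceding lemma together with the well-known moments of the Gaussian. There is no genuine technical obstacle.
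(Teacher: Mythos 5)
Your argument is correct and is precisely the one the paper leaves implicit: the paper only remarks that $\Pi$ is the product of $\pi$ and a standard Gaussian and that the result "follows easily," and your decomposition $|X|^2=|\theta|^2+|V|^2$ followed by Lemma~\ref{Inv mom} for the $\theta$-marginal and the $\chi^2_d$ moment formula for the $V$-marginal is exactly that easy argument. One small bookkeeping point worth noting: Lemma~\ref{Inv mom} is stated for exponents $\geq 2$, so invoking it with exponent $2p$ covers $p\geq 1$; for $0<p<1$ you would additionally apply Jensen's inequality $\E|X|^{2p}\leq(\E|X|^2)^p$ to reduce to the case already handled.
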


\begin{lemma}\label{lemma-zt mom}
    Let $X$ such that $\mathcal{L}(X)=\Pi$. Then, for $\z_t$ defined in Definition \ref{cont-timeint-X} there holds
    \[\sup_{t\in [n\lambda,(n+1)\lambda]} \E |\z_t|^{2p}\leq C_{z,p}\]
\end{lemma}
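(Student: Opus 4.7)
The plan is to invoke the continuous-time interpolation \eqref{eq-contint} applied to a single step of the exponential scheme started at $X=(\theta,v)\sim\Pi$, and then to control every resulting term in $L^{2p}$ using Minkowski's inequality, the linear-growth bound on $\hl$ from the monotonic polygonal taming lemma, and the stationary moment estimate from the previous lemma. Since $\mathcal{L}(X)=\Pi$, we have $\E|\theta|^{2p},\E|v|^{2p}\leq C_{\pi,p}$ as our base estimates.

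Fix $t\in[n\lambda,(n+1)\lambda]$ and write $s:=t-n\lambda\in[0,\lambda]$. Then \eqref{eq-contint} gives
\begin{align*}
\V_t &= e^{-\gamma s}v \;-\; \int_0^s e^{-\gamma(s-u)}\hl(\theta)\,du \;+\; \sqrt{2\gamma}\int_0^s e^{-\gamma(s-u)}\,dW_u, \\
\x_t &= \theta \;+\; \int_0^s \V_r\,dr.
\end{align*}
Applying Minkowski in $L^{2p}$ and the growth bound $|\hl(\theta)|\leq \frac{2}{\sqrt{\lambda}}(1+|\theta|)$, the drift contribution to $(\E|\V_t|^{2p})^{1/(2p)}$ is at most $s\cdot \frac{2}{\sqrt{\lambda}}(1+(\E|\theta|^{2p})^{1/(2p)})\leq 2\sqrt{\lambda}(1+C_{\pi,p}^{1/(2p)})$, which stays bounded as $\lambda\to 0$. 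The Gaussian stochastic integral has variance $\frac{1-e^{-2\gamma s}}{2\gamma}$, so after multiplication by $\sqrt{2\gamma}$ it contributes at most $c_p\sqrt{1-e^{-2\gamma s}}\leq c_p$ to the $L^{2p}$ norm by standard Gaussian moments. Combining these with $(\E|v|^{2p})^{1/(2p)}\leq C_{\pi,p}^{1/(2p)}$ yields a uniform bound on $\sup_{s\in[0,\lambda]}(\E|\V_{n\lambda+s}|^{2p})^{1/(2p)}$.

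Next, Minkowski applied to the position equation gives $(\E|\x_t|^{2p})^{1/(2p)}\leq (\E|\theta|^{2p})^{1/(2p)} + \lambda\sup_{r\in[0,s]}(\E|\V_r|^{2p})^{1/(2p)}$, which is bounded by the previous step. Finally, $\E|\z_t|^{2p}\leq 2^{p-1}(\E|\x_t|^{2p}+\E|\V_t|^{2p})$ delivers the desired constant $C_{z,p}$, depending polynomially on $d,m,u(0),p,\gamma$ and uniformly in small $\lambda$.

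The only subtle point, and the reason this bound is nontrivial, is that the global Lipschitz/growth constant of $\hl$ blows up as $\lambda^{-1/2}$. The compensating observation is that a one-step time integral produces a factor $\lambda$, so their product yields a net $\sqrt{\lambda}$ that is harmless; this cancellation is precisely what makes the taming construction effective inside a one-step $L^{2p}$ estimate. No pathwise or Gr\"onwall-type iteration is required here because the estimate is local to a single step starting from the stationary law.
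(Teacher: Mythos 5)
The paper does not include a proof of this lemma, so there is no in-text argument to compare against; on its own merits, your proof is correct and is the natural one-step argument. You correctly identify the key cancellation: although the growth constant of $h_\lambda$ scales like $\lambda^{-1/2}$, the velocity update carries a time-integral factor $\int_0^s e^{-\gamma(s-u)}\,du\leq s\leq\lambda$, so the drift contribution is $O(\sqrt\lambda)$ in $L^{2p}$, which vanishes rather than blows up as the step shrinks. Combined with Minkowski in $L^{2p}$, the stationary moment bounds $\E|\theta|^{2p},\E|v|^{2p}\leq C_{\pi,p}$ from the preceding lemmas, and standard Gaussian moment estimates for $\sqrt{2\gamma}\int_0^s e^{-\gamma(s-u)}\,dW_u\sim\mathcal N\bigl(0,(1-e^{-2\gamma s})I_d\bigr)$, this closes the estimate on $\V_t$ and then feeds directly into the position bound via $\x_t=\theta+\int\V_r\,dr$. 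One small bookkeeping remark: the constant $c_p$ you invoke for the Gaussian term absorbs a polynomial-in-$d$ factor from $(\E|G|^{2p})^{1/(2p)}$, $G\sim\mathcal N(0,I_d)$; you acknowledge this at the end by noting $C_{z,p}$ is polynomial in $d$, which is consistent with how such constants are tracked elsewhere in the paper.
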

\begin{lemma}\label{one-step}
    Let $n\in \mathbb{N}$ and $X$ such that $\mathcal{L}(X)=\Pi$.
    Then,
   \[ \E|\x_t-\x_{n\lambda}|^{2} \leq C_{os,2} \lambda^2   \]
   and \[\E |\x_t-\x_{n\lambda}|^4 \leq C_{os,4}\lambda ^4 \]
   where $C_{os,p}\leq \mathcal{O}(d^{p})$
\end{lemma}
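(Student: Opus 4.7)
The plan is to use the pathwise identity $\x_t-\x_{n\lambda}=\int_{n\lambda}^t \V_s\,ds$, which is immediate from the continuous-time interpolation \eqref{eq-contint} (the second line, $d\tilde x_t=\tilde Y_t\,dt$). This reduces the two claims entirely to a uniform moment bound on the velocity component $\V_s$ for $s\in[n\lambda,(n+1)\lambda]$; there is no need to separately handle the Gaussian stochastic-integral term appearing in $\V_s$, since it is already controlled through the moments of $\z_s$.

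The key step is then Jensen's inequality (applied to the convex function $x\mapsto |x|^p$ and the normalized Lebesgue measure on $[n\lambda,t]$): for $p\geq 1$ and $t\in[n\lambda,(n+1)\lambda]$,
\[\left|\int_{n\lambda}^t \V_s\,ds\right|^{p}\ \leq\ (t-n\lambda)^{p-1}\int_{n\lambda}^t |\V_s|^p\,ds\ \leq\ \lambda^{p-1}\int_{n\lambda}^t |\V_s|^p\,ds.\]
Taking expectation, applying Tonelli, and pulling out the remaining factor of $\lambda$ from the time integral yields
\[\E\left|\x_t-\x_{n\lambda}\right|^{p}\ \leq\ \lambda^{p}\sup_{s\in [n\lambda,(n+1)\lambda]}\E|\V_s|^p.\]
Since $|\V_s|\leq |\z_s|$, Lemma \ref{lemma-zt mom} bounds the supremum by $C_{z,1}$ when $p=2$ and by $C_{z,2}$ when $p=4$, giving the two claimed estimates with $C_{os,2}=C_{z,1}$ and $C_{os,4}=C_{z,2}$.

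What remains, and is the only substantive point, is verifying the polynomial-in-$d$ scaling $C_{os,p}\leq \mathcal{O}(d^p)$, which must be extracted from the constants inside Lemma \ref{lemma-zt mom}. This scaling is inherited through two mechanisms: (i) the invariant law $\Pi$ has $2p$-moments bounded polynomially in $d$, by Lemma \ref{Inv mom} applied to the $\theta$-marginal $\pi$ together with the standard Gaussian form of the $v$-marginal; and (ii) a single step of the tamed exponential scheme preserves polynomial-in-$d$ moments uniformly in $t\in[n\lambda,(n+1)\lambda]$, which follows from a routine Gr\"onwall argument using the linear-growth bound $|h_\lambda(x)|\leq 2\lambda^{-1/2}(1+|x|)$ established in Section 3 together with the fact that the Gaussian increments $\Xi_{n+1},\Xi'_{n+1}$ have covariance of order $\lambda\cdot I_d$, contributing $O(\lambda d)$ to second moments. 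Since these ingredients are already absorbed into Lemma \ref{lemma-zt mom}, the Jensen computation above suffices to conclude.
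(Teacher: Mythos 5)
Your proof is correct and follows essentially the same route as the paper's: both rely on the pathwise identity $\x_t-\x_{n\lambda}=\int_{n\lambda}^t \V_s\,ds$ together with Jensen's inequality to pull the exponent inside the time integral, which reduces the claim to a uniform moment bound on $\V_s$ over one step. The only (minor) difference is that the paper re-derives the bound on $\E|\V_s|^2$ and $\E|\V_s|^4$ inline from the explicit interpolation formula \eqref{eq-contint}, expanding the exponential drift and Gaussian terms and invoking the linear-growth bound on $h_\lambda$ together with the $\Pi$-moment estimates, whereas you delegate this step to the already-stated Lemma~\ref{lemma-zt mom} via $|\V_s|\le|\z_s|$. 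That delegation is legitimate given the lemma ordering in the text, and your exponent bookkeeping ($C_{z,1}$ for $p=2$, $C_{z,2}$ for $p=4$) is consistent with the $\E|\z_t|^{2p}\le C_{z,p}$ convention. Your closing paragraph correctly identifies the two ingredients behind the $\mathcal O(d^p)$ dependence (polynomial moments of $\Pi$, one-step moment stability with Gaussian covariance $O(\lambda I_d)$), which is precisely what the paper's inline computation makes explicit.
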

\begin{proof}
    \[\E |\V_t|^2\leq 3 (1-e^{-\lambda (t-\lambda n)})^2 \E |\V_{n\lambda}|^2 + 6\lambda\gamma d + 3 \lambda |\hl(x_{n\lambda}|^2 \leq  
 C( \E |V_{n\lambda}|^2 + \E |x_{n\lambda}|^2 + \lambda \gamma d+4 )\]
 where the last step is obtained by using the fact that the drift coefficient has linear growth.
 Thus, 
\[\E |\x_t-\x{n\lambda}|^2 \leq \E \left(\int_{n\lambda}^t \V_s ds\right)^2 \leq \lambda \int \E |\V_s|^2 ds \] which completes the first proof.
With the same arguments,
\[\E |V_t|^4\leq C'(\E |\V_{n\lambda}|^4+ \E |\x_{n\lambda}|^4 + \lambda^2 \gamma^2 d^2+ 1)\] 
and since 
\[\E |\x_t-\x_{n\lambda}|^4 \leq \E \left(\int_{n\lambda}^t \V_s ds\right)^4 \leq \lambda^3 \int \E |\V_s|^4 ds \] the result immediately follows.
\end{proof}
\begin{lemma}
    Let $\z_t=(\x_t,\V_t)$ as in Lemma \ref{lemma-zt mom}.
    Then,
    \[\E |\hl(\x_{n\lambda})-h(\x_{n\lambda})|^2\leq C_{tam}\lambda^2\]
\end{lemma}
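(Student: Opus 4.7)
The plan is to exploit the key structural property that $\hl$ coincides exactly with $h$ on the ball $B(0,\rl-2)$, so that the only contribution to the expected squared error comes from the tail event $\{|\x_{n\lambda}|>\rl-2\}$. On this event I would bound the integrand crudely using the polynomial growth of $h$ together with the linear-in-$|x|$ (but $1/\sqrt{\lambda}$-dependent) bound on $\hl$, and I would control the probability of the event by Markov's inequality combined with Lemma \ref{lemma-zt mom}. The large taming radius $\rl\propto \lambda^{-1/(2(l+2))}$ is precisely what makes this tail probability shrink polynomially in $\lambda$ to any desired order.

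Concretely, using the first bullet of the taming lemma I would begin with
\[\E|\hl(\x_{n\lambda})-h(\x_{n\lambda})|^2 \;=\; \E\bigl[\1_{\{|\x_{n\lambda}|>\rl-2\}}\, |\hl(\x_{n\lambda})-h(\x_{n\lambda})|^2\bigr].\]
Assumption \ref{A2-Ham} gives $|h(x)|\leq C(1+|x|)^{l+1}$, and the taming lemma gives $|\hl(x)|\leq (2/\sqrt{\lambda})(1+|x|)$. A straightforward calculation combining these growth bounds with the uniform-in-$n$ polynomial moment bounds of Lemma \ref{lemma-zt mom} yields the crude estimate $\E|\hl(\x_{n\lambda})-h(\x_{n\lambda})|^4 \leq C/\lambda^2$. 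Cauchy--Schwarz then reduces the problem to bounding $\P(|\x_{n\lambda}|>\rl-2)$, and by Markov's inequality applied with an even moment of order $2N$ this probability is at most $C_{z,N}(\rl-2)^{-2N}=O(\lambda^{N/(l+2)})$.

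Putting the two pieces together gives
\[\E|\hl(\x_{n\lambda})-h(\x_{n\lambda})|^2 \;\leq\; C\,\lambda^{N/(2(l+2))-1},\]
and the required $\lambda^2$ rate follows by choosing any integer $N\geq 6(l+2)$, which is admissible because Lemma \ref{lemma-zt mom} supplies all polynomial moments (ultimately inherited from Lemma \ref{Inv mom} and the initial condition assumption \ref{A3-Ham}). The one mildly delicate point is notational: one has to keep careful track of how the polynomial dimension dependence propagates along the chain Lemma \ref{Inv mom} $\to$ Lemma \ref{lemma-zt mom} $\to$ Cauchy--Schwarz $\to$ Markov, since the resulting constant $C_{tam}$ should depend polynomially on $d$. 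Apart from this bookkeeping, the argument is the standard ``vanish on the good event, tail-bound on the bad event'' mechanism by which taming schemes achieve arbitrary $\lambda^k$ approximation rates.
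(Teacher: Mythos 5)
The paper does not actually supply a proof of this lemma, so there is no author proof to compare against; evaluating your argument on its own merits, it is correct. The decomposition is the standard one for taming error estimates: the integrand vanishes on $\{|\x_{n\lambda}|\le \rl-2\}$ by the first bullet of the taming lemma, so only the tail event contributes. Your crude fourth-moment bound $\E|\hl(\x_{n\lambda})-h(\x_{n\lambda})|^4\le C/\lambda^2$ follows from $|\hl(x)|\le (2/\sqrt{\lambda})(1+|x|)$, the growth $|h(x)|\le |h(0)|+L(1+|x|)^{l+1}$ implied by Assumption \ref{A2-Ham}, and Lemma \ref{lemma-zt mom}; Cauchy--Schwarz then reduces the task to the Markov tail bound $\P(|\x_{n\lambda}|>\rl-2)\le C_{z,N}(\rl-2)^{-2N}=O(\lambda^{N/(l+2)})$, and the choice $N\geq 6(l+2)$ is admissible because the moments of every order are inherited from the stationary law $\Pi$ via Lemma \ref{Inv mom} (and from Assumption \ref{A3-Ham} for the initial condition). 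The only caveat is the bookkeeping you already flag: $C_{tam}$ depends on $l$ through the moment order $N$, and the dimension factors accumulate through $C_{\pi,N}\to C_{z,N}$, so to substantiate the claim that $C_{tam}$ is polynomial in $d$ one must track that chain explicitly rather than absorbing it into generic constants.
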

\begin{lemma}\label{one-step-approx}
    Let $X$ such that $\mathcal{L}(X)=\Pi$. Let the Langevin SDE $(p_t,Q_t)$ as in \eqref{eq-underlangtwo} started at time $n\lambda$ at $X$ and $\z_t=(\x_t,\V_t)$ the one-step movement of the algorithm as in Definition \ref{cont-timeint-X}. 
    There holds, 
    \[W_2\left(\mathcal{L}(\z_t),\Pi\right)\leq C\lambda^2\]
    As a result,
    \[W_{a,b} \left(\mathcal{L}(\z_t),\Pi\right)\leq \sqrt{\frac{3}{2}}C \lambda^2\]
\end{lemma}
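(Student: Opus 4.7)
The plan is to build a synchronous coupling between the one-step movement $\z_t=(\x_t,\V_t)$ and the kinetic Langevin SDE $(Q_t,p_t)$ of \eqref{eq-underlangtwo}, both initialized at the same random variable $X\sim\Pi$ at time $n\lambda$ and driven by a common Brownian motion. Because $\Pi$ is invariant for the continuous SDE, $\mathcal{L}(p_t,Q_t)=\Pi$ for every $t\in[n\lambda,(n+1)\lambda]$, so under this coupling
\[
W_2^2(\mathcal{L}(\z_t),\Pi)\ \leq\ \E|\V_t-p_t|^2+\E|\x_t-Q_t|^2.
\]
Subtracting the mild representations \eqref{eq-underlangtwo} and \eqref{eq-contint} makes the stochastic integrals cancel, leaving the pathwise identities
\[
\V_t-p_t\ =\ -\int_{n\lambda}^t e^{-\gamma(t-s)}\bigl(\hl(X)-h(Q_s)\bigr)\,ds,\qquad \x_t-Q_t\ =\ \int_{n\lambda}^t(\V_s-p_s)\,ds.
\]

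The crux is to control $\E|\hl(X)-h(Q_s)|^2$ uniformly in $s\in[n\lambda,(n+1)\lambda]$. I would split it into the taming error and a local-Lipschitz part,
\[
\hl(X)-h(Q_s)\ =\ \bigl(\hl(X)-h(X)\bigr)+\bigl(h(X)-h(Q_s)\bigr).
\]
The first piece is handled directly by the preceding taming-error lemma, which gives $\E|\hl(X)-h(X)|^2\leq C_{tam}\lambda^2$. For the second, by assumption \aref{A2-Ham} and Cauchy--Schwarz,
\[
\E|h(X)-h(Q_s)|^2\ \leq\ L^2\sqrt{\E(1+|X|+|Q_s|)^{4l}}\sqrt{\E|X-Q_s|^4},
\]
where the moment factor is finite and polynomial in $d$ by stationarity and Lemma \ref{Inv mom}, while $\E|X-Q_s|^4\leq\lambda^3\int_{n\lambda}^s\E|p_u|^4\,du\leq C\lambda^4$ follows from Jensen's inequality together with the Gaussian form of the velocity marginal of $\Pi$. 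Consequently $\E|\hl(X)-h(Q_s)|^2\leq C\lambda^2$.

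Plugging this bound back, Cauchy--Schwarz applied to the velocity integral yields $\E|\V_t-p_t|^2\leq C\lambda^4$, and one further integration in time gives $\E|\x_t-Q_t|^2\leq C\lambda^6$. Summing the two contributions yields $W_2^2(\mathcal{L}(\z_t),\Pi)\leq C\lambda^4$, which is the claimed bound; the $W_{a,b}$ estimate then follows at once from the norm equivalence \eqref{eq-equivdist}. The main obstacle is the moment accounting: the superlinear growth permitted by \aref{A2-Ham} forces the Cauchy--Schwarz splitting so that all dimensional dependence is absorbed into a stationarity-controlled factor, while the smallness in $\lambda$ comes exclusively from the displacement $|X-Q_s|$ together with the quantitative taming bound.
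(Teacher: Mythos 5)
Your proof is correct and reaches the stated bound, but it takes a genuinely simpler route than the paper. The paper's own argument decomposes the drift mismatch as
\[
h(p_s)-\hl(\x_{n\lambda}) \ =\ \bigl(h(\x_s)-h(\x_{n\lambda})\bigr)+\bigl(h(\x_{n\lambda})-\hl(\x_{n\lambda})\bigr)+\bigl(h(p_s)-h(\x_s)\bigr)\,,
\]
so the third piece compares the SDE position to the \emph{algorithm's} interpolated position $\x_s$. That piece is unknown a priori, so after a Young inequality the bound for $\E|\V_t-Q_t|^2$ feeds back into $\E|\x_s-p_s|^2$, and the paper closes the loop with a Gr\"onwall argument. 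You instead stop at a two-term decomposition,
\[
\hl(X)-h(Q_s)\ =\ \bigl(\hl(X)-h(X)\bigr)+\bigl(h(X)-h(Q_s)\bigr)\,,
\]
comparing the true gradient at the common starting point $X$ directly to the true gradient at the \emph{SDE's} current position $Q_s$. Since $(Q_s,p_s)$ evolves at stationarity, the displacement $Q_s-X=\int_{n\lambda}^s p_u\,du$ is controlled purely by the (Gaussian) velocity marginal of $\Pi$, so no unknown quantity appears and Gr\"onwall is unnecessary. Both routes use the same ingredients (taming-error lemma, invariant-measure moment bounds, Cauchy--Schwarz for the polynomial weight in \aref{A2-Ham}) and deliver the same $W_2=O(\lambda^2)$; your version is shorter because it avoids both the algorithm's one-step moment lemma and the iteration. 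The final $W_{a,b}$ step via \eqref{eq-equivdist} (with $a=1/M_\lambda\leq 1$) is the same in both.

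One tiny notational remark: the paper's lemma statement writes the SDE as $(p_t,Q_t)$ yet its own proof uses $p$ for the position and $Q$ for the velocity; you have swapped the roles (which is actually what the lemma statement literally suggests). This is cosmetic and does not affect correctness.
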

\begin{proof}
   Since $\Pi$ is the invariant measure of the kinetic Langevin SDE \eqref{eq-underlangtwo}, $\mathcal{L}((p_t,Q_t))=\Pi$ so it suffices to bound $\E |\z_t-(p_t,Q_t)|^2$.
  Since the process have the same initial conditions, one notices (using Jensen's inequality).
   \begin{equation}\label{eq-xp}
   \begin{aligned}
         \E |\x_t-p_t|^2\leq \E |\int_{n\lambda}^t \V_s-Q_sds |^2 &\leq (t-n\lambda) \int\E |\V_s-Q_s|^2 ds\\&\leq \lambda \int_{n\lambda}^t \E |\V_s-Q_s|^2 ds
   \end{aligned}
   \end{equation}
   With the same argument,
   \begin{equation}
       \begin{aligned}
     \E |\V_t-Q_t|^2& \leq       \E |\int_{n\lambda}^t e^{-\gamma (t-s)}(h(p_s)-\hl(\x_{n\lambda}))ds |^2 ds
     \\&\leq \lambda \int_{n\lambda}^t e^{-2\gamma(t-s)}\E |h(p_s)-\hl(\x_{n\lambda})|^2 ds
     \\&\leq \lambda \int_{n\lambda}^t \E |h(p_s)-\hl(\x_{n\lambda})|^2 ds
     \\&\leq 3\lambda \int_{n\lambda}^t \E |h(\x_s)-h(\x_{n\lambda})|^2 ds
     \\& + 3 \lambda \int_{n\lambda}^t \E |h(\x_{n\lambda})-h_\lambda(\x_{n\lambda})|^2 ds
     \\&+ 3 \lambda \int \E |h(p_s)-h(\x_s)|^2 ds
      \\& \leq 3 \lambda \int_{n\lambda}^t \E (1+|\x_{n\lambda}|+|\x_s|)^{2l}|\x_s-\x_{n\lambda}|^2 ds
      \\& + 3 \lambda \int_{n\lambda}^t \E |h(\x_{n\lambda})-h_\lambda(\x_{n\lambda})|^2 ds
      \\&+ 3 \lambda \int_{n\lambda}^t \E (1+|\x_{s}|+|p_s|)^{2l+1}|\x_{s}-p_s|ds
      \\&\leq  3 \lambda  \int \sqrt{\E (1+|\x_{n\lambda}|+|\x_s|)^{4l}}\sqrt{\E |\x_s-\x_{n\lambda}|^4}ds 
      \\& + 3 \lambda \int_{n\lambda}^t \E |h(\x_{n\lambda})-h_\lambda(\x_{n\lambda})|^2 ds
      \\& +9\lambda^3 \int_{n\lambda}^t \E (1+|\x_{s}|+|p_s|)^{4l+2}ds 
      \\&+ \frac{1}{\lambda^2} \int_{n\lambda}^t \E |\x_{s}-p_s|^2ds
       \end{aligned}
   \end{equation}
   By the one step approximation, it follows that \begin{equation}\label{eq-first term}
       3 \lambda  \int \sqrt{\E (1+|\x_{n\lambda}|+|\x_s|)^{4l}}\sqrt{\E |\x_s-\x_{n\lambda}|^4}ds \leq C_1 \lambda^4
   \end{equation}
   By the taming error, the second term can be bounded as follows
   \begin{equation}\label{eq-second term}
        3 \lambda \int_{n\lambda}^t \E |h(\x_{n\lambda})-h_\lambda(\x_{n\lambda})|^2 ds\leq 3 C_{tam} \lambda^4
   \end{equation}
   By the uniform moment bounds the third term is $\mathcal{O}(\lambda^4)$
   so one obtains
   \begin{equation}\label{eq-VQ}
          \E |\V_t-Q_t|^2 \leq \hat{C} \lambda^4 + \frac{1}{\lambda^2} \int_{n\lambda}^t \E |\x_{s}-p_s|^2ds
   \end{equation}
   Combining this with \eqref{eq-xp} leads to 
   \[\E |\x_t-p_t|^2 \leq \hat{C}\lambda^5 + \frac{1}{\lambda}\int_{n\lambda}^t \int_{n\lambda}^s \E |\x_u-p_u|^2 du ds \quad \forall t \in [n\lambda,(n+1)\lambda]\]
   As a result, 
   \[\sup_{n\lambda\leq z\leq t} \E |\x_z-p_z|^2\leq \hat{C} \lambda^5 +  \int_{n\lambda}^t \sup_{n\lambda \leq u\leq s} \E |\x_u-\p_u|^2 ds\] 
   Since by the uniform moment bounds the right hand side is finite, by Grownwall's inequality one obtains
   \begin{equation}
       \E |\x_t-p_t|^2 \leq \sup_{n\lambda\leq z\leq t}\E |\x_z-p_z|^2\leq  e^{\lambda} \hat{C}\lambda^5\leq 2\hat{C}\lambda^5 
   \end{equation}
   Plugging this into \eqref{eq-VQ} yields
   \[\E |\V_t-Q_t|^2 \leq \hat{C}\lambda^4 \]
\end{proof}
\begin{theorem}
    There holds \[W_{2}\left( \mathcal{L}(\theta_n,V_n),\Pi \right)\leq  (1-f(\lambda))^n   W_{2}\left( \mathcal{L}(\theta_{0},V_0),\Pi \right) + C\lambda^2 f(\lambda) \]
    Using the connection between $W_{a,b}$ and $W_2$ one obtains
    \[W_{2}\left( \mathcal{L}(\theta_n,V_n),\Pi \right)\leq  2 \sqrt{M_\lambda}(1-f(\lambda))^n   W_{a,b}\left( \mathcal{L}(\theta_{0},V_0),\Pi \right) + 3C\lambda^2 \sqrt{M_\lambda} /f(\lambda)\]
\end{theorem}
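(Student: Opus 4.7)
The plan is a standard perturbation-of-a-contraction argument performed in the weighted norm $\|\cdot\|_{a,b}$ and then transferred to $W_2$ via the equivalence \eqref{eq-equivdist}. Writing $P$ for the Markov kernel of one step of the tamed stochastic exponential scheme and $\mu_n = \mathcal L(\theta_n,V_n)$, the triangle inequality gives
\[
W_{a,b}(\mu_{n+1},\Pi)\;\leq\;W_{a,b}(\mu_n P, \Pi P)\;+\;W_{a,b}(\Pi P, \Pi).
\]
I will control the first term by the contraction Proposition and the second by the one-step bias from Lemma \ref{one-step-approx}, then iterate the resulting recursion.

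For the first term I couple the two chains by driving them with the same Gaussian increments $(\Xi_{k+1},\Xi_{k+1}')$ and by initialising them from an optimal $W_{a,b}$-coupling of $\mu_n$ and $\Pi$. The Proposition then gives $\|z_{n+1}-\tilde z_{n+1}\|_{a,b}^2\leq (1-f(\lambda))\|z_n-\tilde z_n\|_{a,b}^2$ almost surely, and taking expectations yields $W_{a,b}(\mu_n P,\Pi P)\leq \sqrt{1-f(\lambda)}\,W_{a,b}(\mu_n,\Pi)$. For the second term I exploit the invariance of $\Pi$ under the continuous kinetic Langevin SDE: coupling one step of the algorithm with the exact SDE on $[n\lambda,(n+1)\lambda]$ driven by the same Brownian motion, both started from $X\sim \Pi$, Lemma \ref{one-step-approx} (whose moment hypotheses are supplied by Lemma \ref{lemma-zt mom}) gives $W_{a,b}(\Pi P,\Pi)\leq \sqrt{3/2}\,C\lambda^2$.

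Iterating the recursion produces a geometric series which, using $1-\sqrt{1-x}\geq x/2$ on $[0,1]$, sums to
\[
W_{a,b}(\mu_n,\Pi)\;\leq\;(1-f(\lambda))^{n/2}\,W_{a,b}(\mu_0,\Pi)\;+\;\frac{2\sqrt{3/2}\,C\lambda^2}{f(\lambda)}.
\]
Finally, from \eqref{eq-equivdist} with $a = 1/M_\lambda \leq 1$ one has $W_2\leq \sqrt{2M_\lambda}\,W_{a,b}$ and $W_{a,b}\leq \sqrt{3/2}\,W_2$, and combining these inequalities produces the prefactors $2\sqrt{M_\lambda}$ and $3C\sqrt{M_\lambda}/f(\lambda)$ in the statement.

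None of these steps is technically hard in itself, because the two genuinely nontrivial ingredients, namely the pointwise contraction of the Proposition and the $O(\lambda^2)$ one-step error of Lemma \ref{one-step-approx}, have already been established. The one point that requires care and is in fact the main obstacle to a sharp final rate is the $a = 1/M_\lambda$-dependence in the norm equivalence: because $a$ shrinks like $\lambda^{1/2}$, the conversion $W_2\leq \sqrt{2M_\lambda}\,W_{a,b}$ introduces an extra $\sqrt{M_\lambda}$ factor on both the transient and the bias parts of the estimate, and it is precisely tracking this dependence, together with $f(\lambda)=m\lambda/(4\gamma)$ and $\gamma\asymp \lambda^{-1/4}$, that controls the final iteration complexity $O(\log(1/\varepsilon)/\varepsilon^{5/2})$ stated in the theorem.
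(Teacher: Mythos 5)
Your proposal takes essentially the same route as the paper: triangle inequality in the weighted norm $\|\cdot\|_{a,b}$ with the intermediate law being one algorithm step started from $\Pi$, contraction for the first term, the one-step bias Lemma~\ref{one-step-approx} for the second, iterate, then convert to $W_2$ via the norm equivalence~\eqref{eq-equivdist}. In fact you are slightly more careful than the paper's own proof: since the contraction Proposition controls $\|\cdot\|_{a,b}^2$, the per-step factor on $W_{a,b}$ should be $\sqrt{1-f(\lambda)}$, giving $(1-f(\lambda))^{n/2}$ after $n$ steps and the factor $2/f(\lambda)$ from summing the geometric series via $1-\sqrt{1-x}\geq x/2$, exactly as you wrote; the paper's displayed proof drops this square root (though the version of the theorem in the Main results section agrees with your $n/2$ exponent).
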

\begin{proof}
    \begin{equation}
        \begin{aligned}
            &W_{a,b}\left( \mathcal{L}(\theta_{n+1},V_{n+1}),\Pi \right)\\&\leq   W_{a,b}\left( \mathcal{L}(\theta_{n+1},V_{n+1}), \mathcal{L}(\x_{n\lambda},\V_{n\lambda})\right) +
            W_{a,b}\left(\mathcal{L}(\x_{n\lambda},\V_{n\lambda}),\Pi\right)
            \\&\leq (1-f(\lambda)) W_{a,b}(\mathcal{L}(\theta_{n},V_n),\Pi) + \sqrt{\frac{3}{2}}C  \lambda^2 
        \end{aligned}
    \end{equation}
    By induction, one deduces that 
   \[ W_{a,b}\left( \mathcal{L}(\theta_{n+1},V_{n+1}),\Pi \right)\leq  (1-f(\lambda))^n   W_{a,b}\left( \mathcal{L}(\theta_{0},V_0),\Pi \right) + C \lambda^2 /f(\lambda)\]
   As a result,
   \[W_2\left( \mathcal{L}(\theta_{n+1},V_{n+1}),\Pi \right)\leq \sqrt{M_\lambda} (1-f(\lambda))^n   W_{a,b}\left( \mathcal{L}(\theta_{0},V_0),\Pi \right) +  C \lambda^2 /f(\lambda) \sqrt{M_\lambda} \]
\end{proof}

\section{Convergence of the OBABO scheme}

\begin{corollary}
For all $p\geqslant 1$ and all $\nu,\mu\in\mathcal P_p(\R^{2d})$, 
 \begin{eqnarray}\label{Eq:W2contract}
 \mathcal W_{p,a,b}^2\po \nu P,\mu P\pf & \leqslant  & (1-\lambda\kappa) \mathcal W_{p,a,b}^2\po \nu ,\mu \pf\,,
 \end{eqnarray}
 and for all $n\in \N$,
 \[\W_p^2\po \nu P^n,\mu P^n\pf \ \leqslant 3 M_\lambda  (1-\lambda \kappa)^{n} \W_p^2(\nu,\mu)\,.\]
 Moreover, $P$ admits a unique invariant probability distribution $\pi_\lambda$, and $\pi_\lambda\in  \mathcal P_p(\R^{2d})$ for all $p\geqslant 1$. 
\end{corollary}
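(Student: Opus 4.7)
The plan is to lift the almost-sure contraction of Proposition \ref{controction-OBABO} to the Wasserstein setting via a synchronous coupling, then transfer the estimate from the weighted norm $\|\cdot\|_{a,b}$ to the standard Euclidean norm using \eqref{eq:equi_nome}, and finally deduce existence and uniqueness of $\pi_\lambda$ by a Cauchy-sequence argument.

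First, for the one-step bound \eqref{Eq:W2contract}, I would take an optimal coupling $(Z_0,\tilde Z_0)$ of $\nu$ and $\mu$ for $\W_{p,a,b}$ and drive both copies of the chain in \eqref{eq:OBABO-chain} with the \emph{same} Gaussian increments $(G,G')$. This yields a (non-optimal) coupling $(Z_1,\tilde Z_1)$ of $\nu P$ and $\mu P$, and Proposition \ref{controction-OBABO} gives $\|Z_1-\tilde Z_1\|_{a,b}^2 \leqslant (1-\lambda\kappa)\|Z_0-\tilde Z_0\|_{a,b}^2$ almost surely. Raising to the power $p/2$, taking expectations and using this coupling in the infimum defining $\W_{p,a,b}$ gives
\[\W_{p,a,b}^p(\nu P,\mu P)\ \leqslant\ (1-\lambda\kappa)^{p/2}\,\W_{p,a,b}^p(\nu,\mu),\]
which is \eqref{Eq:W2contract} (taking the $2/p$ power). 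Iterating the same argument $n$ times replaces $(1-\lambda\kappa)$ by $(1-\lambda\kappa)^n$.

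Second, to pass from the weighted to the standard Euclidean Wasserstein distance, I would apply \eqref{eq:equi_nome} with $a=1/M_\lambda$, which reads $\tfrac{1}{2M_\lambda}|z|^2\leqslant \|z\|_{a,b}^2 \leqslant \tfrac32|z|^2$. Sandwiching the iterated contraction between these two inequalities gives
\[\W_p^p(\nu P^n,\mu P^n)\ \leqslant\ (2M_\lambda)^{p/2}\W_{p,a,b}^p(\nu P^n,\mu P^n)\ \leqslant\ (3M_\lambda)^{p/2}(1-\lambda\kappa)^{np/2}\W_p^p(\nu,\mu),\]
and raising to the $2/p$ power yields $\W_p^2(\nu P^n,\mu P^n)\leqslant 3M_\lambda (1-\lambda\kappa)^n \W_p^2(\nu,\mu)$.

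Finally, for the invariant measure, I would apply the iterated contraction to the pair $(\delta_0,\delta_0 P^k)$ to get $\W_p(\delta_0 P^n,\delta_0 P^{n+k})\leqslant (3M_\lambda)^{1/2}(1-\lambda\kappa)^{n/2}\W_p(\delta_0,\delta_0 P^k)$, and control the second factor uniformly in $k$ by the telescoping bound
\[\W_p(\delta_0,\delta_0 P^k)\ \leqslant\ \sum_{j=0}^{k-1}\W_p(\delta_0 P^j,\delta_0 P^{j+1})\ \leqslant\ \W_p(\delta_0,\delta_0 P)\,\sum_{j=0}^{\infty}(3M_\lambda)^{1/2}(1-\lambda\kappa)^{j/2}\,,\]
which is finite because $\W_p(\delta_0,\delta_0 P)<\infty$ (one step of \eqref{eq:OBABO-chain} from the origin is a polynomial expression in independent Gaussians, using that $\hl$ has at most linear growth). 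Thus $(\delta_0 P^n)_{n\in\N}$ is Cauchy in $(\mathcal P_p(\R^{2d}),\W_p)$, its limit $\pi_\lambda$ lies in $\mathcal P_p$ for every $p\geqslant 1$, is $P$-invariant by $\W_p$-continuity of $P$ (which is itself a consequence of the one-step contraction), and is unique by the strict contraction. The only mildly delicate point is the geometric-sum estimate in this last step; everything else is a direct coupling-and-equivalence-of-norms calculation.
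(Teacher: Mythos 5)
Your proof is correct and follows essentially the same route the paper takes: synchronous coupling to lift the almost-sure contraction of Proposition \ref{controction-OBABO} to $\W_{p,a,b}$, then transfer to $\W_p$ via the norm equivalence $\tfrac{1}{2M_\lambda}|z|^2\leqslant\|z\|_{a,b}^2\leqslant\tfrac32|z|^2$ (which gives exactly the $3M_\lambda$ prefactor), and finally obtain $\pi_\lambda$ from the contraction. The only cosmetic difference is that the paper invokes the Banach fixed-point theorem directly, whereas you unpack it into the Cauchy-sequence argument (together with the telescoping bound needed to see $(\delta_0 P^n)_n$ is Cauchy), which amounts to the same thing once one knows $(\mathcal P_p,\W_p)$ is complete.
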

\begin{proof}
    \eqref{Eq:W2contract} is a direct consequence of the previous result.\\
    Using the connection of the $|\cdot |_{2,a,b}$ and the Euclidean distance distance in \eqref{eq-equivdist}  one easily obtains the result for the Wasserstein distance. The existence and uniqueness of the invariant measure is a consequence of Banach fixed point theorem.
\end{proof}

    Let $P_O$ the Markov kernel associated with $O$-step in \eqref{eq-decomp-OBABO}. The Markov kernel $P$ associated with the whole chain can be written as
    \begin{equation}
        P=P_O P_V P_O
    \end{equation}
    \begin{lemma}\label{Po-inv}
    $\Pi$ is invariant under $P_O$ i.e 
    \[\Pi P_0 =\Pi\]
    in distribution.
    \end{lemma}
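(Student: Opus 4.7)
The plan is to use the product structure of $\Pi$ together with the fact that a centered Gaussian is invariant under the discrete Ornstein--Uhlenbeck update. First I would recall that, by \eqref{eq-pibeta}, the target $\Pi$ factorizes as $\pi\otimes \mathcal{N}(0,I_d)$, so that under $\Pi$ the components $\theta$ and $v$ are independent with $\theta\sim \pi$ and $v\sim \mathcal{N}(0,I_d)$. Inspecting the decomposition in \eqref{eq-decomp-OBABO}, the O-step leaves $x$ unchanged and sends $v$ to $v'=\tilde{\eta} v + \sqrt{1-\tilde{\eta}^2}\,G$ with $G\sim \mathcal{N}(0,I_d)$ independent of $(x,v)$. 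Thus $P_O$ factors as the identity on the $x$-coordinate tensored with a Markov kernel $K_O$ acting only on the velocity variable.

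The core computation is then to check that $\mathcal{N}(0,I_d)$ is invariant under $K_O$. This is the standard Gaussian identity: if $v\sim \mathcal{N}(0,I_d)$ and $G\sim \mathcal{N}(0,I_d)$ are independent, then $v'=\tilde{\eta} v + \sqrt{1-\tilde{\eta}^2}\,G$ is centered Gaussian with covariance $\tilde{\eta}^2 I_d + (1-\tilde{\eta}^2)I_d = I_d$. Since $G$ is drawn independently of $(x,v)$ and $x$ is unchanged, the new pair $(x,v')$ stays a product measure: the $x$-marginal is still $\pi$ and the $v'$-marginal is still $\mathcal{N}(0,I_d)$, with independence of the two components preserved because $v'$ is a measurable function of $(v,G)$, both of which are independent of $x$.

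Combining these observations yields $\Pi P_O = \pi\otimes \mathcal{N}(0,I_d) = \Pi$, which is the claim. There is no real obstacle here: the statement reduces to the elementary fact that the discrete OU map preserves the standard Gaussian, and the product structure of $\Pi$ extends invariance from the velocity marginal to the full joint measure. The only point to flag is that independence of $G$ from the initial state is built into the definition of $P_O$ in \eqref{eq-decomp-OBABO}; no further regularity of $h_\lambda$ or assumption on $\lambda,\gamma$ enters at this step.
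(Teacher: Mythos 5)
Your proof is correct and follows essentially the same route as the paper: factor $\Pi=\pi\otimes\mathcal{N}(0,I_d)$, observe that the $O$-step is the identity on $x$ and an AR(1)/discrete OU update on $v$, and verify that $\tilde\eta^2 I_d+(1-\tilde\eta^2)I_d=I_d$. Your additional remark that $v'$ remains independent of $x$ (since $v'$ is a function of $(v,G)$, both independent of $x$) is a small but welcome tightening that the paper leaves implicit.
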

    \begin{proof}
        Writing $\Pi=(\Pi_1,\Pi_2)$ the first part follows $\pi$ while the second follows $N(0,I_d).$ The $O$-step in \eqref{eq-decomp-OBABO}, only acts on $\Pi_2$. Since the Gaussian random variable $G$ added is independent from $\Pi_2$, the resulting distribution will still be a Gaussian with mean $0$ and covariance $(\tilde{\eta}^2+(1-\tilde{\eta}^2))I_d=I_d$
    \end{proof}
    \begin{lemma}\label{Po-nonincr}
        For any measures $\mu$, $\nu$ $\in P_p(\mathbb{R}^d)$ there holds
        \[W_p(\mu P_O, \nu P_O)\leq W_p(\mu,\nu) \]
    \end{lemma}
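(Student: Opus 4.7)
The natural strategy is a synchronous coupling argument, exploiting the fact that the O-step is a contraction on the velocity variable and acts trivially on position.

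First I would fix an optimal coupling $((X,V),(Y,W))$ of $\mu$ and $\nu$ realizing the $p$-Wasserstein distance, so that $\E\bigl|(X,V)-(Y,W)\bigr|^p = \W_p^p(\mu,\nu)$. Then I would draw a single standard Gaussian $G\sim \mathcal N(0,I_d)$ independent of this coupling, and use it to drive the O-step for both inputs simultaneously. By the definition of the O-step in \eqref{eq-decomp-OBABO}, the resulting coupled outputs are
\[
\bigl(X,\ \tilde{\eta} V + \sqrt{1-\tilde{\eta}^2}\,G\bigr) \sim \mu P_O, \qquad \bigl(Y,\ \tilde{\eta} W + \sqrt{1-\tilde{\eta}^2}\,G\bigr)\sim \nu P_O,
\]
which is a valid coupling of the pushforward measures.

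Next I would compute the Euclidean distance between these coupled outputs. Since the position marginals are unchanged and the common Gaussian cancels in the velocity difference,
\[
\bigl|(X,\tilde{\eta} V+\sqrt{1-\tilde{\eta}^2}G)-(Y,\tilde{\eta} W+\sqrt{1-\tilde{\eta}^2}G)\bigr|^2 \ = \ |X-Y|^2 + \tilde{\eta}^2|V-W|^2\ \leqslant\ |X-Y|^2 + |V-W|^2\,,
\]
using only that $\tilde{\eta}=e^{-\lambda\gamma/2}\in(0,1)$. Raising to the power $p/2$, taking expectations, and noting that the right-hand side then equals $\W_p^p(\mu,\nu)$ by the choice of coupling, I obtain $\W_p^p(\mu P_O,\nu P_O)\leqslant \W_p^p(\mu,\nu)$, which gives the desired bound after taking $p$-th roots.

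There is no real obstacle here; the only points requiring minor care are that the $\W_p$ infimum is attained (standard under the finite $p$-th moment assumption), that the constructed coupling is indeed a coupling of the pushforward measures (immediate from independence of $G$ from $(X,V,Y,W)$), and that the argument carries over to the weighted norm $|\cdot|_{a,b}$ if needed, which it does since the coefficient in front of $|V-W|^2$ in the weighted norm is $a$ and the cross term $2b\langle X-Y,\tilde{\eta}(V-W)\rangle$ can be handled by the same contraction factor $\tilde{\eta}\leqslant 1$ together with Cauchy--Schwarz; I would spell this out only if the statement is later invoked in the weighted norm.
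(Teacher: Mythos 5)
Your proof is correct and matches the paper's argument: a synchronous coupling driven by a common Gaussian $G$, yielding the pointwise bound $|X-Y|^2 + \tilde\eta^2|V-W|^2 \leqslant |X-Y|^2+|V-W|^2$, then raising to the power $p/2$ and taking expectations against an optimal coupling of $\mu,\nu$. The closing remark about the weighted norm is extra but harmless; the paper only needs the unweighted statement here.
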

    \begin{proof}
      Let $\left(x_i, v_i\right) \in \mathbb{R}^{2 d}, i=1,2$, and $G \sim \mathcal{N}\left(0, I_d\right)$. Set $x_i^{\prime}=x_i$ and $v_i^{\prime}=\eta v_i+\sqrt{1-\eta^2} G$ for $i=1,2$. Then $\left(x_i^{\prime}, v_i^{\prime}\right) \sim \lambda_{\left(x_i, v_i\right)} P_O$ and

$$
\left|x_1^{\prime}-x_2^{\prime}\right|^2+\left|v_1^{\prime}-v_2^{\prime}\right|^2 \leqslant\left|x_1-x_2\right|^2+\left|v_1-v_2\right|^2
$$

almost surely. \\Considering $\nu_i \in \mathcal{P}_p\left(\mathbb{R}^d\right)$ for $i=1,2$, sampling $\left(x_i, v_i\right) \sim \nu_i$, using the previous coupling. Taking the power $p / 2$ and the expectation yields
\[\E |(x_1^{\prime},v_1^{\prime})-(x_2^{\prime},v_2^{\prime})|^{p}\leq W_p^p(\nu_1,\nu_2)\]
which leads to the result.
    \end{proof}
    \newcommand{\pd}{\Pi_\lambda}

    \newcommand{\Wp}{W_{p,a,b}}
    \begin{proposition}
    Let $\Pi_\lambda$ be the invariant measure of the tamed $OBABO$ algorithm. Then,
       \[\mathcal{W}_{p, a, b}\left(\Pi_\lambda, \Pi\right) \leqslant \frac{2}{\lambda \kappa} \mathcal{W}_{p, a, b}(\Pi P, \Pi)\leq K_1\frac{2}{\lambda \kappa} \mathcal{W}_p\left(\Pi P_V, \Pi\right) \] 
    \end{proposition}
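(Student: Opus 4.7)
The plan is to turn the bound into a standard one-step/contraction estimate and then peel off the $P_O$ factors using their non-expansivity together with the invariance of $\Pi$ under $P_O$. I would proceed in three steps.

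First, I add and subtract $\Pi P$ inside $\W_{p,a,b}$ to get
\[
\Wp(\pd,\Pi) \ \leqslant \ \Wp(\pd, \Pi P) + \Wp(\Pi P, \Pi).
\]
Since $\pd$ is $P$-invariant, $\pd = \pd P$, so the previous corollary (one-step contraction in $\Wp$ with rate $\sqrt{1-\lambda\kappa}$) gives $\Wp(\pd, \Pi P) = \Wp(\pd P, \Pi P) \leqslant \sqrt{1-\lambda\kappa}\,\Wp(\pd,\Pi)$. Rearranging and using $1-\sqrt{1-x} \geqslant x/2$ for $x \in [0,1]$ yields
\[
\Wp(\pd,\Pi)\ \leqslant\ \frac{1}{1-\sqrt{1-\lambda\kappa}} \,\Wp(\Pi P,\Pi)\ \leqslant\ \frac{2}{\lambda\kappa} \,\Wp(\Pi P, \Pi),
\]
which is the first inequality of the proposition.

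Second, I exploit the factorisation $P=P_O P_V P_O$. By Lemma \ref{Po-inv}, $\Pi P_O = \Pi$, so $\Pi P = \Pi P_V P_O$ and $\Pi = \Pi P_O$, which gives
\[
\Wp(\Pi P, \Pi)\ =\ \Wp(\Pi P_V P_O,\ \Pi P_O).
\]
Lemma \ref{Po-nonincr} states that $P_O$ is non-expansive in standard Wasserstein $\W_p$, so after converting the weighted distance to the Euclidean one via \eqref{eq-equivdist} I pass $P_O$ through and then convert back:
\[
\Wp(\Pi P_V P_O,\Pi P_O)\ \leqslant\ \sqrt{\tfrac32\max\{1,a\}}\,\W_p(\Pi P_V P_O,\Pi P_O)\ \leqslant\ \sqrt{\tfrac32\max\{1,a\}}\,\W_p(\Pi P_V,\Pi).
\]
Setting $K_1 = \sqrt{\tfrac32\max\{1,a\}}$ (which is finite under the standing choice of $a,b$) yields the second inequality.

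The routine calculation is the first step, where one only needs monotonicity/contractivity; the real content is in the second step, and the only subtle point is that Lemma \ref{Po-nonincr} is formulated for the Euclidean Wasserstein distance while the contraction of $P$ lives naturally in $\Wp$. The main obstacle is therefore bookkeeping the norm-equivalence constants so that the ``weighted-to-Euclidean-to-weighted'' sandwich only inflates the bound by a dimension-free factor $K_1$ depending on $a$; this requires that $b^2 \leqslant a/4$, which holds under the parameter choice of Proposition \ref{controction-OBABO}. Beyond this, the argument does not depend on any explicit structure of $P_V$, which is why the target measure can conveniently be reduced to the $\W_p$-distance under one $P_V$ update — this will be estimated separately using the SDE/scheme comparison developed in the previous section.
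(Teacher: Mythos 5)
Your proposal is correct and follows essentially the same route as the paper: triangle inequality around $\Pi P$, one-step contraction of $P$ in the weighted Wasserstein metric (with the elementary bound $1-\sqrt{1-x}\geqslant x/2$), then peeling off the outer $P_O$ factors using Lemma \ref{Po-inv} and Lemma \ref{Po-nonincr}. The only cosmetic differences are the order in which you pass from $\Wp$ to $\W_p$ (you convert before applying the non-expansivity of $P_O$, the paper converts right after obtaining the $\frac{2}{\lambda\kappa}$ factor) and the precise value of $K_1$: your $K_1=\sqrt{\tfrac32\max\{1,a\}}=\sqrt{3/2}$ (since $a=1/M_\lambda\leqslant1$) is actually sharper than the $\sqrt{3M_\lambda}$ written in the paper, but both are consistent with the unspecified constant $K_1$ in the proposition.
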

        \begin{proof}
            Writing \[\begin{aligned}
                \Wp (\pd,\Pi)&=\Wp(\pd dP,\Pi)\leq \Wp (\pd P, \Pi P)+ \Wp(\Pi P,\Pi)
                \\&\leq \sqrt{(1-\lambda \kappa)} \Wp(\pd,\Pi)+ \Wp(\Pi P,\Pi)
                \\&\leq (1- \frac{\lambda \kappa}{2})\Wp(\pd,\Pi)+ \Wp(\Pi P,\Pi)
            \end{aligned}\]
        leads to 
        \begin{equation}\label{eq-bound pd-p}
            \Wp (\pd,\Pi)\leq \frac{2}{\lambda \kappa}\Wp(\Pi P,\Pi)\leq
             \sqrt{3M_\lambda}\frac{2}{\kappa \lambda} W_p(\Pi P,\Pi)
        \end{equation}
        where the last step is obtained by \eqref{eq-equivdist}.
        In addition, noting that since $\Pi$ is invariant under the $O$-step due to Lemma \ref{Po-inv}, then 
\[W_p(\Pi P,\Pi)=W_p(\Pi P_O P_V P_O, \Pi)=W_p( \Pi P_V P_0, \Pi P_0)\]
and applying the result of Lemma \ref{Po-nonincr} one obtains
\begin{equation}\label{eq-PV}
    W_p(\Pi P,\Pi)\leq W_p( \Pi P_V,\Pi).
\end{equation}
        Combining \eqref{eq-PV} and \eqref{eq-bound pd-p} completes the proof.
        \end{proof}
\subsection{Comparing the  BAB steps to a Hamiltonian system}
In order to complete our proof of the bound of the Wasserstein distance of $\pd$ and $\Pi$ one need to control $W_p(\Pi,\Pi P_V).$
We introduce the transition operator $Q_t$ of the Hamiltonian dynamics: given $(x, v) \in \mathbb{R}^{2 d}$ and considering $\left(\hat{x}_t, \hat{v}_t\right)_{t \geqslant 0}$ the solution of

\begin{equation}\label{eq-hamiltonian}
    \hat{x}_t^{\prime}=\hat{v}_t, \quad \hat{v}_t^{\prime}=-\nabla \left(\hat{x}_t\right), \quad \hat{x}_0=x, \quad \hat{v}_0=v,
\end{equation}

denote $\Phi_{H D}(x, v)=\left(\hat{x}_\lambda, \hat{v}_\lambda\right)$ and $Q_\lambda \varphi(x, v)=\varphi\left(x_\lambda, v_\lambda\right)$.\\

In addition, the Verlet integrator BAB can be presented as
\[\begin{aligned}
&\Phi_V(x, v)=\left(x+\lambda v-\frac{1}{2} \lambda^2 \hl(x), v-\frac{1}{2} \lambda\left(\hl(x)+\hl\left(x+\lambda v-\frac{1}{2} \lambda^2 \hl(x)\right)\right)\right)\\
&\text { so that the steps } \mathrm{BAB} \text { are given as }\left(x_1, v_1^{\prime}\right)=\Phi_V\left(x_0, v_0^{\prime}\right)
\end{aligned}\]
\begin{lemma}
    \[\sup_{t\in [0,\lambda]}|\hat{x}_t|^{p}+|\hat{v}_t|^{p}\leq C(1+ |u(x)|^{p}+|v|^p) \quad \forall p \in \mathbb{N}.\]
    Suppose that the intial condition is the measure $\Pi$ then,
    \[\E |\hat{x}_t|^{p}+\E |\hat{v}_t|^{p}\leq \hat{C}_p \quad \forall t \in [0,\lambda]\] 
\end{lemma}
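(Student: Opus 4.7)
The plan is to exploit conservation of the Hamiltonian $H(x,v):=\tfrac12|v|^2+u(x)$ along the flow in \eqref{eq-hamiltonian}. A direct computation gives $\tfrac{d}{dt}H(\hat x_t,\hat v_t)=\langle \nabla u(\hat x_t),\hat v_t\rangle-\langle \nabla u(\hat x_t),\hat v_t\rangle=0$, so the energy identity $\tfrac12|\hat v_t|^2+u(\hat x_t)=\tfrac12|v|^2+u(x)$ holds for every $t\ge 0$.

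For the pathwise bound I would combine this identity with a coercive lower bound on $u$. Strong convexity (Assumption \ref{A1-Ham}) yields $u(y)\ge u(0)+\langle \nabla u(0),y\rangle+m|y|^2$, and after a Young splitting of the linear term one obtains $u(y)\ge \tfrac m2|y|^2-c_0$ with $c_0$ depending only on $u(0)$, $|\nabla u(0)|$, and $m$. Plugging this into the energy identity yields both
\[
|\hat v_t|^2\ \le\ |v|^2+2|u(x)|+2c_0\,,\qquad \tfrac m2|\hat x_t|^2\ \le\ \tfrac12|v|^2+|u(x)|+c_0\,.
\]
Raising these to the power $p/2$ and using the elementary inequality $(a+b+c)^{p/2}\le C_p(a^{p/2}+b^{p/2}+c^{p/2})$ then gives the stated bound uniformly in $t\in[0,\lambda]$, with a constant depending only on $p$, $m$, and the initial data of $u$.

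For the expectation bound under $\mathcal L(\hat x_0,\hat v_0)=\Pi$, the cleanest route is to invoke the classical fact that $\Phi_{HD}$ leaves $\Pi\propto\exp(-H)$ invariant: the Hamiltonian vector field $(v,-\nabla u(x))$ is divergence-free so the flow preserves Lebesgue measure, and $H$ is conserved along trajectories, so the density of the pushforward of $\Pi$ is unchanged. Consequently $(\hat x_t,\hat v_t)\sim\Pi$ for every $t\in[0,\lambda]$, and the moment bound reduces to $\E_\Pi|x|^p+\E_\Pi|v|^p$, which is finite by Lemma \ref{Inv mom} together with the Gaussian marginal in velocity.

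The only mild subtlety I anticipate is the appearance of the absolute value $|u(x)|$ in the first bound: since $u$ is bounded below but not necessarily nonnegative, I would absorb the negative part into $c_0$ from the outset so that $u(x)$ and $|u(x)|$ become interchangeable up to the constant. The remainder of both arguments is routine; no Gronwall or differential inequality is needed, as conservation of $H$ does all the work.
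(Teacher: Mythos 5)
Your proof is correct, and for the pathwise bound it takes the same route as the paper: energy conservation along the Hamiltonian flow, combined with a coercive lower bound on $u$. You make the paper's informal appeal to ``$u$ is superlinear'' precise by deriving $u(y)\geq \tfrac m2|y|^2-c_0$ from strong convexity (Assumption \ref{A1-Ham}), which is exactly what is needed to turn the energy identity into a bound on $|\hat x_t|$ and $|\hat v_t|$.

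Where you genuinely depart from the paper is the second claim. The paper simply ``takes expectations'' of the pathwise bound $C(1+u(x)^p+|v|^p)$ under $\mathcal L(x,v)=\Pi$, which implicitly requires $\E_\pi[|u(\theta)|^p]<\infty$; this holds because $u$ has polynomial growth under Assumption \ref{A2-Ham}, but the paper does not spell it out. You instead invoke Liouville's theorem (the Hamiltonian field is divergence-free, so the flow preserves Lebesgue measure, and together with conservation of $H$ the pushforward of $\Pi\propto e^{-H}$ is again $\Pi$), so that $(\hat x_t,\hat v_t)\sim\Pi$ for all $t$, and the moment bound reduces directly to $\E_\Pi[|x|^p+|v|^p]$ from Lemma \ref{Inv mom} and the Gaussian velocity marginal. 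Your route is cleaner in that it avoids any discussion of moments of $u$ under $\pi$; the paper's route is more elementary but needs the extra (unstated) polynomial-growth observation. Both are valid.

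One minor remark: the constant in your pathwise bound naturally comes out as $C(1+|u(x)|^{p/2}+|v|^p)$ rather than the stated $C(1+|u(x)|^p+|v|^p)$; since $|u(x)|^{p/2}\leq 1+|u(x)|^p$, this is consistent with (indeed slightly stronger than) the lemma as written, but it is worth being explicit so the reader does not think a square root has been lost.
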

\begin{proof}
    Since for large values, $u$ is superlinear and the fact that in a Hamiltonian system the energy is preserved then,
   \[ |\hat{x}_t|^{p}+|\hat{v}_t|^{p}\leq C(1+u(\hat{x}_t)+|\hat{v}_t|)^p \leq C(1+ u(x)^p+ |v|^p).\]
   The second claim follows by taking expectations.
\end{proof}
\begin{lemma}
    There holds 
    \[W_2(\Pi, \Pi P_V)=W_2(\Pi Q_\lambda, \Pi P_V)\leq \mathcal{O}\left( M_\lambda \lambda^2\right) \]
\end{lemma}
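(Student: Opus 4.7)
The plan is to combine the invariance of $\Pi$ under the exact Hamiltonian flow with a synchronous coupling and a one-step Verlet consistency estimate, splitting the expectation according to whether the taming is active or not.

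First I would establish that $\Pi Q_\lambda = \Pi$, which gives the first equality in the statement. Two classical facts combine: the Hamiltonian $H(x,v) = u(x) + \tfrac12|v|^2$ is conserved along the flow \eqref{eq-hamiltonian}, and Liouville's theorem (the vector field has zero phase-space divergence) ensures that $\Phi_{HD}$ preserves Lebesgue measure on $\R^{2d}$. Since $\Pi$ has density proportional to $e^{-H}$ with respect to Lebesgue, these two facts together imply $\Pi Q_\lambda = \Pi$.

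Next I would use the synchronous coupling obtained by sampling $(X,V)\sim\Pi$ and pushing it simultaneously through $\Phi_{HD}$ and $\Phi_V$, so that
\[
W_2^2(\Pi Q_\lambda, \Pi P_V) \ \leq\ \E\bigl|\Phi_{HD}(X,V) - \Phi_V(X,V)\bigr|^2.
\]
To bound this one-step error I would Taylor-expand $\Phi_{HD}$ around $t=0$, giving $\hat{x}_\lambda = X + \lambda V - \tfrac{\lambda^2}{2}h(X) + R_x$ and $\hat{v}_\lambda = V - \lambda h(X) - \tfrac{\lambda^2}{2}\nabla h(X) V + R_v$ with cubic remainders $R_x,R_v$ involving $\nabla h$ and $\nabla^2 h$ along the trajectory, and compare with the explicit Verlet formulas $x_1 = X + \lambda V - \tfrac{\lambda^2}{2}h_\lambda(X)$ and $v_1 = V - \tfrac{\lambda}{2}(h_\lambda(X)+h_\lambda(x_1))$, using the integral representation $h_\lambda(x_1)-h_\lambda(X) = \bigl(\int_0^1 \nabla h_\lambda(X+s(x_1-X))\,ds\bigr)(x_1-X)$, valid almost everywhere by Lipschitz continuity of $h_\lambda$.

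The final step is to split the expectation along the high-probability event $E_\lambda = \{|X| \leq r_\lambda - 3,\ \sup_{t\in[0,\lambda]}|\hat{x}_t| \leq r_\lambda - 2\}$. On $E_\lambda$, one has $h_\lambda = h$ at every point that enters the one-step formulas, so the taming corrections vanish and only the genuine Verlet consistency error of order $\lambda^3$ survives; its $L^2$-norm is finite thanks to the trajectory moment bounds furnished by the preceding lemma together with the polynomial growth of $\nabla^k u$, $k\leq 3$. On $E_\lambda^c$, I would use the crude bound $|h_\lambda(X) - h(X)| \leq 2M_\lambda(1+|X|) + L(1+|X|)^{l+1}$ combined with Markov's inequality and the finiteness of $\E|X|^{2p}$ for every $p$ under $\Pi$ (from the earlier lemma on the moments of $\Pi$) to show that $\mathbb{P}(E_\lambda^c)$ decays faster than any polynomial in $\lambda$, because $r_\lambda\to\infty$. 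Putting the two contributions together yields $\E|\Phi_{HD}(X,V) - \Phi_V(X,V)|^2 = O(\lambda^6)$ up to super-polynomially small remainders, whence the claimed bound $W_2 = O(M_\lambda\lambda^2)$ (i.e.\ $W_2^2 = O(\lambda^3)$ in the present regime) follows a fortiori.

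The main obstacle is the lack of $C^2$ regularity of $h_\lambda$, which prevents directly Taylor-expanding $h_\lambda(x_1)$ around $h_\lambda(X)$. I would circumvent this by restricting to $E_\lambda$, where $h_\lambda$ coincides with the true gradient $h$ (assumed smooth enough, possibly via a mild strengthening of \aref{A2-Ham} to $u\in C^3$), and by handling the complement through super-polynomially small tail probabilities; alternatively, one can work only with the a.e.\ Jacobian $\nabla h_\lambda$ and its $L^\infty$ bound $M_\lambda$, at the price of an extra $M_\lambda$ factor that is precisely absorbed into the stated rate. A secondary technicality is controlling the trajectory moments of $(\hat{x}_t,\hat{v}_t)$ entering the cubic remainder, but these are provided directly by the preceding lemma.
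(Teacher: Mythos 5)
Your primary route — Taylor-expanding the exact Hamiltonian flow $\Phi_{HD}$ to third order on an event $E_\lambda$ where the taming is inactive and disposing of $E_\lambda^c$ by super-polynomially small tail probabilities — is a genuine alternative to the paper's proof, and would, if available, sharpen the rate to $W_2 = O(\lambda^3)$. But it requires $u\in C^3$, which the paper does \emph{not} assume: the local Lipschitz assumption on $\nabla u$ gives only $u\in C^{1,1}_{\mathrm{loc}}$, so neither $\nabla^2 u$ nor $\nabla^3 u$ is available for the cubic remainders $R_x,R_v$ you write down. The paper sidesteps this entirely. Rather than expanding $\Phi_{HD}$ in powers of $\lambda$, it introduces the auxiliary constant-drift flow $x_t' = w_t$, $w_t' = -h_\lambda(x_0)$ — whose time-$\lambda$ endpoint is exactly the $\mathrm{BAB}$ Verlet update up to the explicit half-kick correction $\frac{\lambda}{2}(h_\lambda(x)-h_\lambda(x_\lambda))$ — and closes a Gr\"onwall-type bound on $\E|\hat{x}_t-x_t|^2$ by splitting the drift mismatch $\nabla u(\hat{x}_u)-h_\lambda(x)$ into a taming piece $\nabla u(\hat{x}_u)-h_\lambda(\hat{x}_u)$, controlled in $L^2$ by the separate taming-error lemma (which already packages the tail estimate your event-splitting is trying to re-derive by hand, since $\hat{x}_u\sim\pi$), and a global-Lipschitz piece $h_\lambda(\hat{x}_u)-h_\lambda(x)$, controlled by $M_\lambda$. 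No derivative of $h$ is ever invoked. Your fallback suggestion — work only with the a.e.\ Jacobian $\nabla h_\lambda$ and its $L^\infty$ bound $M_\lambda$, accepting the extra $M_\lambda$ factor — is in substance what the paper's proof amounts to, and it does recover the stated $O(M_\lambda\lambda^2)$ rate; and your justification of the first equality $\Pi Q_\lambda = \Pi$ via energy conservation plus Liouville is correct and is precisely the (tacit) reasoning the paper relies on.
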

\begin{proof}
Let $(x_t,w_t)$ the solution of 
\[   \begin{aligned}
&x_t^{\prime}=w_t, \quad w_t^{\prime}=-\hl(x), \quad x_0=x, \quad w_0=v\\
&\text { In other words, }\\
&x_t=x+t v-\frac{t^2}{2} \hl(x), \quad w_t=v-t \hl(x),
\end{aligned}\]
where $\mathcal{L}(x,v)=\Pi$
Thus, 
\[\pv(x,v)=(x_\lambda,\tilde{w}_\lambda)=\left(x_\lambda, w_\lambda  + \frac{\lambda}{2}(\hl(x)-\hl(x_\lambda))\right)\]
For all $t\in [0,\lambda]$,
\begin{equation}
    \begin{aligned}
        \E |\hat{x}_t-x_t|^2 &\leq \lambda \E |\int_0^t v_s-v ds +\int_0^t\int_0^s \hl(x)duds|^2
        \\&\leq \lambda \int_0^t \int_0^s \E |\nabla u(\hat{x}_u)-\hl(x)|^2duds
        \\&\leq 2\lambda \int_0^t \int_0^s \E |\nabla u(\hat{x}_u)-\hl(\hat{x}_u)|^2duds +2\lambda\int_0^t \int_0^s \E |\hl(x)-\hl(\hat{x}_u)|^2 duds
        \\&\leq C \lambda^4 + \lambda M_\lambda^2 \int_0^t \int_0^s \E |x-\hat{x}_u|^2duds
        \\&\leq  C \lambda^4 + \lambda M^2_\lambda \int_0^t \int_0^s (\int_0^u \E |\hat{v}_z|dz)^2duds
        \\&\leq C \lambda^4 + \lambda M^2_\lambda \int_0^t \int_0^s u\int_0^u \E |\hat{v}_z|^2 dz duds
        \\&\leq C\lambda^4 + C' \lambda^5 M_\lambda^2
    \end{aligned}
\end{equation}
In addition, with the same arguments as before
\[\begin{aligned}
    \E |w_t-\hat{v}_t|^2\leq \lambda \int_0^t \E |\nabla u(\hat{x}_s)-\hl(x)|^2 ds \leq \hat{C}\lambda^4 M_\lambda^2
\end{aligned} \]
Bringing all together,
\[\begin{aligned}
    &\E |\hat{v}_\lambda -\tilde{w}_\lambda|^2\leq 2 \E |\hat{v}_\lambda -w_\lambda|^2 +2 \frac{\lambda^2}{4}\E |\hl(x_\lambda)-\hl(x)|^2\\&\leq 2\hat{C}\lambda^4 M_\lambda^2 + \frac{\lambda^2}{2} M_\lambda^2 \E |x_\lambda-x|^2
    \\&\leq 2\hat{C}\lambda^4 M_\lambda^2 + \lambda^2 M_\lambda^2 (\lambda^2 |v|^2 + \frac{\lambda^4}{4}|\hl(x)|^2)
    \\&\leq \mathcal{O}(\lambda^4 M_\lambda^2)
\end{aligned} \]
\end{proof}
\section{Acknowledgements}
{This work has been partially supported by project MIS 5154714 of the National Recovery and Resilience Plan Greece 2.0 funded by the European Union under the NextGenerationEU Program.}
\appendix
\section{Appendix}
\begin{lemma}
Recall that $\eta=e^{-\lambda \gamma}$. Under our restrictions for $\gamma$ and $\lambda$,
    There holds 
    \begin{equation}\label{eq-boundeta}
        \frac{\lambda \gamma}{2}\leq 1-\eta\leq \lambda\gamma
    \end{equation}
    \begin{equation}\label{eq-boundeta2}
0\leq-1+\eta +\lambda \gamma\leq \frac{\lambda ^2\gamma^2}{2}        
    \end{equation}
\end{lemma}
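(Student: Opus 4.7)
The plan is to observe that both displayed inequalities are purely one-dimensional assertions about the function $e^{-x}$ at $x = \lambda\gamma$, and that the standing restriction $\lambda \leq 1/(2\gamma)$ guarantees $x \leq 1/2 \leq 1$, which is all that is needed.

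First I would prove the second bound $0 \leq e^{-x} - 1 + x \leq x^2/2$ for \emph{all} $x \geq 0$ by two elementary convexity arguments. For the lower bound, let $g(x) = e^{-x} - 1 + x$; then $g(0) = 0$ and $g'(x) = 1 - e^{-x} \geq 0$ for $x \geq 0$, so $g(x) \geq 0$. For the upper bound, let $G(x) = x^2/2 - e^{-x} + 1 - x$; then $G(0) = G'(0) = 0$ and $G''(x) = 1 - e^{-x} \geq 0$, so $G' \geq 0$ and hence $G \geq 0$. Specializing to $x = \lambda\gamma \geq 0$ yields \eqref{eq-boundeta2} without any restriction on $\lambda\gamma$.

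Next I would derive the first bound from the second. The upper estimate $1 - e^{-x} \leq x$ is just the inequality $g(x) \geq 0$ rewritten, so it holds for all $x \geq 0$. For the lower estimate, rewrite
\[1 - e^{-x} \ = \ x - \bigl(e^{-x} - 1 + x\bigr) \ \geq \ x - \tfrac{x^2}{2}\]
using the upper half of \eqref{eq-boundeta2} just proved. Since the hypotheses imply $x = \lambda\gamma \leq 1/2 \leq 1$, we have $x^2/2 \leq x/2$, and therefore $x - x^2/2 \geq x/2 = \lambda\gamma/2$, giving \eqref{eq-boundeta}.

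There is really no main obstacle here; the only substantive ingredient is the calculus inequality $e^{-x} \geq 1 - x$, used twice (once directly for the upper bound in \eqref{eq-boundeta} and once implicitly via $G''\geq 0$ for the upper bound in \eqref{eq-boundeta2}), together with the trivial observation that $\lambda\gamma \leq 1$ converts $x^2/2$ into $x/2$. The only place where the assumption on $\lambda\gamma$ is actually used is in this final comparison $x^2/2 \leq x/2$; all the other estimates are valid for every nonnegative argument.
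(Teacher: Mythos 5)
Your proof is correct. The paper itself states this lemma in the appendix without any proof, so there is nothing to compare against; your elementary argument (monotonicity of $g(x)=e^{-x}-1+x$ and convexity of $G(x)=x^2/2-e^{-x}+1-x$, followed by the observation that $\lambda\gamma\leq 1$ turns $x^2/2\leq x/2$) is exactly the standard derivation one would supply, and you correctly identify that the standing restriction $\lambda\leq 1/(2\gamma)$ is needed only for the lower bound in \eqref{eq-boundeta}.
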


\begin{lemma}[ \cite{raginsky}, Lemma 3, adapted]\label{over-damp second moment}
Let ${L}_t$ be given by the overdamped Langevin SDE in \eqref{eq-over-Langevin} with initial condition $L_0$.
Then, under the strong convexity assumotuin there holds
\begin{equation}\label{eq-oversecondm}
    \E |{L}_t|^2 \leq \E|L_0|^2e^{-mt} + 2\frac{u(0)+d}{m}(1-e^{-mt}).
\end{equation}
\end{lemma}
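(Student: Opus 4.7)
The plan is to apply Itô's formula to $|L_t|^2$, obtain a linear differential inequality for the second moment using strong convexity of $u$, and finish with Grönwall.

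First, applying Itô to $|L_t|^2$ along the overdamped Langevin SDE \eqref{eq-over-Langevin} gives
\[
d|L_t|^2 \;=\; -2\langle L_t, \na u(L_t)\rangle\,dt + \tfrac{2d}{\beta}\,dt + 2\sqrt{2/\beta}\,\langle L_t, dB_t\rangle.
\]
A standard localization argument --- stopping at $\tau_n = \inf\{t:|L_t|\ge n\}$ and passing $n\to\infty$ via Fatou --- ensures the local-martingale part has vanishing expectation in the limit, leading to
\[
\tfrac{d}{dt}\E|L_t|^2 \;=\; -2\,\E\langle L_t, \na u(L_t)\rangle + \tfrac{2d}{\beta}.
\]

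Next, I would bound $\langle x, \na u(x)\rangle$ from below via strong convexity. Assumption \aref{A1-Ham} (strong monotonicity of $h=\na u$ with constant $2m$) is equivalent to $2m$-strong convexity of $u$, hence $u(0) \ge u(x) - \langle \na u(x), x\rangle + m|x|^2$, i.e.,
\[
\langle \na u(x), x\rangle \;\ge\; u(x) - u(0) + m|x|^2.
\]
Since the SDE depends only on $\na u$, I may assume without loss of generality that $\min u \ge 0$, so $\langle \na u(x), x\rangle \ge m|x|^2 - u(0)$. Substituting back yields the linear differential inequality
\[
\tfrac{d}{dt}\E|L_t|^2 \;\le\; -2m\,\E|L_t|^2 + 2u(0) + \tfrac{2d}{\beta},
\]
and Grönwall's inequality produces the claimed form of the bound. (The author's slower rate $e^{-mt}$ and larger prefactor $2(u(0)+d)/m$ can be recovered by a marginally looser Young-type splitting of the drift term; the tight version of the same argument instead gives $e^{-2mt}$ and $(u(0)+d)/m$, with $\beta=1$ as suggested by the statement.)

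The main technical obstacle is the localization step: passing the expectation through the Itô integral requires an a priori bound on $\E|L_t|^2$, which is precisely what we aim to show. The remedy is standard --- work with $L_{t\wedge\tau_n}$, establish the differential inequality for each truncation level, conclude for the truncated second moment, then send $n\to\infty$ by monotone convergence. Everything beyond this technicality is elementary ODE reasoning, so there is no real conceptual difficulty beyond choosing the ``correct'' form of the strong-convexity inequality to match the exact constants in the statement.
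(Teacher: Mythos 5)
Your proof is correct and is essentially the argument the paper outsources to the cited result: the paper's own proof is a one-line citation to \citet{raginsky}, Lemma~3, plus the observation that the dissipativity condition used there follows from strong convexity --- exactly your step deriving $\langle \nabla u(x), x\rangle \geq m|x|^2 - u(0)$, after which the Itô-plus-Grönwall computation coincides. The only thing to flag is that your ``without loss of generality $\min u \geq 0$'' is not cost-free here, since $u(0)$ appears in the final bound and shifting $u$ by a constant changes it; the normalization should be stated explicitly (the paper implicitly relies on it too, since the intrinsic dissipativity constant one gets directly from Assumption~\aref{A1-Ham} with $y=0$ involves $|\nabla u(0)|$, not $u(0)$). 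Your remark that the tight version gives rate $e^{-2mt}$ and prefactor $(u(0)+d)/m$ rather than the stated $e^{-mt}$ and $2(u(0)+d)/m$ is accurate; the lemma as written is simply a loosened form.
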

\begin{proof}
Notice that in the proof of Lemma 3 in \citet{raginsky} only the dissipativity condition is used which is a trivial consequence of strong convexity.
\end{proof}
\begin{lemma}\label{inv-measure bound}
    If $Y$ is a random variable such that $\mathcal{L}(Y)={\pi}$, then
\[\E |Y|^2\leq  \frac{2}{m} (u(0) +\frac{d}{\beta}). \]
As a result, \begin{equation}
    \E |Y|\leq \sqrt{\frac{2}{m} (u(0) +{d})}.
\end{equation}
\end{lemma}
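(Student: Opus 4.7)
The plan is to exploit the fact that $\pi$ is the invariant measure of the overdamped Langevin SDE~\eqref{eq-over-Langevin} and combine this with the uniform-in-time second moment bound already established in Lemma~\ref{over-damp second moment}. I would initialise the SDE with $L_0 = Y$, so that $\mathcal L(L_0) = \pi$. Under the strong convexity Assumption~\ref{A1-Ham}, $\pi$ is the unique stationary distribution of the overdamped dynamics, and hence $\mathcal L(L_t) = \pi$ for every $t \geq 0$, which gives $\E|L_t|^2 = \E|Y|^2$ uniformly in $t$.

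Plugging this stationarity relation into the bound of Lemma~\ref{over-damp second moment} then yields, for every $t > 0$,
\[
\E|Y|^2 \;=\; \E|L_t|^2 \;\leq\; \E|Y|^2 \, e^{-mt} \;+\; \frac{2(u(0)+d)}{m}\bigl(1 - e^{-mt}\bigr).
\]
Rearranging and dividing by the strictly positive factor $1 - e^{-mt}$ (legitimate because $t>0$) collapses this to the claimed bound $\E|Y|^2 \leq \tfrac{2}{m}(u(0) + d)$. The ``As a result'' statement on $\E|Y|$ then follows immediately from Jensen's inequality, namely $\E|Y| \leq \sqrt{\E|Y|^2} \leq \sqrt{\tfrac{2}{m}(u(0)+d)}$.

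The only thing that must be verified before performing the subtraction above is the a priori finiteness of $\E|Y|^2$, so that one is not formally manipulating $\infty - \infty$. This is not a substantive obstacle: strong convexity of $u$ (Assumption~\ref{A1-Ham}) implies the quadratic lower bound $u(x) \geq u(0) + \langle\nabla u(0),x\rangle + m|x|^2$, so $\pi \propto e^{-u}$ is dominated by a Gaussian density on $\R^d$, and consequently every polynomial moment of $Y$ under $\pi$ is finite. In sum, the proof is essentially a one-line consequence of stationarity combined with Lemma~\ref{over-damp second moment}, and I do not anticipate any real technical difficulty.
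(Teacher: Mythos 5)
Your proof is correct and arrives at the same bound, but it takes a genuinely different route from the paper's. The paper starts the overdamped SDE from an arbitrary (integrable) initial condition, invokes $W_2$-convergence of $\mathcal L(L_t)$ to $\pi$ to transfer the bound of Lemma~\ref{over-damp second moment} to the limit, and concludes $\E|Y|^2 = \lim_{t\to\infty}\E|L_t|^2 \leq \tfrac{2}{m}(u(0)+d)$. You instead use a stationarity fixed-point argument: start at $L_0 = Y \sim \pi$, note $\E|L_t|^2 \equiv \E|Y|^2$ by invariance, substitute into the same one-sided bound, and cancel. Your approach is slightly more elementary in that it only needs invariance of $\pi$, not the convergence in $W_2$ (a stronger ergodicity statement that the paper asserts without proof), but it does require the a priori finiteness of $\E|Y|^2$ to justify the subtraction — which you correctly supply via the Gaussian-domination consequence of strong convexity. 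The paper avoids that finiteness check by letting $\E|L_0|^2 e^{-mt}$ vanish in the limit rather than cancelling terms, at the cost of invoking the ergodicity result. Both are valid; yours is arguably the cleaner self-contained argument given the blanket assumptions in place. The Jensen step for $\E|Y|$ matches what the paper leaves implicit.
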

\begin{proof}
Since the solution of the Langevin SDE \eqref{eq-over-Langevin} converges in $W_2$ distance to $\pi$ and\\ $\sup_{t\ge0}\E |\bar{L}_t|^2<\infty$ due to \eqref{eq-oversecondm}, this also implies the convergence of second moments. Therefore, if $Y$ is random variable such that $\mathcal{L}(W)=\pi$ by Lemma \ref{over-damp second moment} there holds that
\[\E |Y|^2=\lim _{t\rightarrow \infty} \E |\bar{L}_t|^2 \leq \frac{2}{m} (u(0) +{d}) .\]
\end{proof}
\begin{lemma}
  \label{harge}[\cite{harge2004convex}, Theorem 1.1]
Let $X$ follow $\mathcal{N}(m_0,\Sigma)$ with density $\phi$, and let $Y$ have density $\phi f$ where $f$ is a log-concave function. Then for any convex map $g$ there holds:
\[\E [g\left(Y-\E Y\right)]\leq \E [g\left(X-\E X\right)] .\]
\end{lemma}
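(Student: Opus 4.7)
The plan is to reduce, via an affine change of variables, to the centered standard Gaussian case ($m_0 = 0$, $\Sigma = I_d$) and then to combine Caffarelli's contraction theorem with a Gaussian convex-order argument. After the reduction, the density of $Y$ is proportional to $e^{-V(x)}$ with $V(x) = \tfrac12 |x|^2 - \log f(x) + \mathrm{const}$; since $f$ is log-concave, $\nabla^2 V \succeq I_d$ in the sense of distributions, and Caffarelli's contraction theorem then provides a $1$-Lipschitz Brenier map $T$ from $\gamma := \mathcal{N}(0, I_d)$ onto $\mathcal{L}(Y)$. In particular, $Y \stackrel{d}{=} T(X)$ with $X \sim \gamma$.

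Defining $S(x) := T(x) - \E T(X)$ yields a $1$-Lipschitz map with $\E S(X) = 0 = \E X$ and $S(X) \stackrel{d}{=} Y - \E Y$, so the inequality reduces to the following Gaussian convex-order statement: for every $1$-Lipschitz $S$ with $\E S(X) = 0$ and every convex $g$, one has $\E g(S(X)) \leq \E g(X)$. I would establish this via an Ornstein--Uhlenbeck interpolation. Setting $\xi_t := e^{-t} S(X) + \sqrt{1 - e^{-2t}}\, Z$ with $Z \sim \gamma$ independent of $X$, we have $\xi_0 \stackrel{d}{=} S(X)$ and $\xi_\infty \stackrel{d}{=} Z \stackrel{d}{=} X$. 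Differentiating $\Phi(t) := \E g(\xi_t)$ and using Gaussian integration by parts (Stein's identity) to handle the $Z$ contribution expresses $\Phi'(t)$ as a quadratic form in $\nabla S(X)$ evaluated against $\nabla^2 g(\xi_t)$; the operator-norm bound $\nabla S(X) \nabla S(X)^\top \preceq I_d$ (from the $1$-Lipschitz property) together with $\nabla^2 g \succeq 0$ (from convexity of $g$) yields $\Phi'(t) \geq 0$, hence $\Phi(0) \leq \Phi(\infty)$, which is the desired inequality.

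The main obstacle will be the regularity of $S$ and $g$: the Brenier map delivered by Caffarelli is only Lipschitz (hence differentiable only almost everywhere) and $g$ need not be smooth. The standard resolution is a two-step mollification by small Gaussian convolution, which preserves both convexity of $g$ and the $1$-Lipschitz bound on $S$; one then carries out the interpolation argument for the mollified objects and passes to the limit via dominated convergence, exploiting the uniform moment bounds implied by the Lipschitz property. As an alternative route that sidesteps optimal transport entirely, one may follow Hargé's original semigroup argument, based on the Föllmer process associated with the log-concave weight $f$; this also delivers the sharp inequality without any spurious constants.
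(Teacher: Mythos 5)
The paper does not prove this lemma --- it is invoked as a black box from \cite{harge2004convex}, Theorem~1.1 --- so there is no in-paper proof to compare against, and your proposal has to be judged on its own terms. The reduction to the centered standard normal and the appeal to Caffarelli's contraction theorem are both sound: after normalization, $\mathcal L(Y)$ has a log-concave density $f$ with respect to $\mathcal N(0,I_d)$, so the Brenier map $T=\nabla\varphi$ from $\mathcal N(0,I_d)$ to $\mathcal L(Y)$ satisfies $0\preceq\nabla^2\varphi\preceq I_d$. The gap is in the Ornstein--Uhlenbeck interpolation step. With $S:=T-\E T(X)$ and $\xi_t=e^{-t}S(X)+\sqrt{1-e^{-2t}}\,Z$, differentiating gives $\Phi'(t)=-e^{-t}\,\E\langle\nabla g(\xi_t),S(X)\rangle+e^{-2t}\,\E\,\mathrm{Tr}\,\nabla^2 g(\xi_t)$. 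Stein's identity on $Z$ does deliver the trace term, but it does \emph{not} turn $\E\langle\nabla g(\xi_t),S(X)\rangle$ into a quadratic form in $\nabla S(X)$ against $\nabla^2 g(\xi_t)$: for nonlinear $S$, the pair $(\xi_t,S(X))$ is not jointly Gaussian, so no Gaussian covariance identity applies, and integrating by parts in $X$ against $S=\nabla\psi$ produces uncontrolled extra terms involving $\nabla^2 S$ and $\psi$ itself. The two cases one can verify by hand --- $S$ linear (where joint Gaussianity rescues the identity) and $g=|\cdot|^2$ (where the Gaussian Poincar\'e inequality rescues the sign) --- are exactly the ones that mask the failure; for a generic convex $g$ the argument does not close.

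The intermediate claim you need --- that a centered $1$-Lipschitz Brenier map of the standard Gaussian is convexly dominated by the Gaussian --- is true, but its clean proof is a martingale one, not an OU interpolation. By Clark--Ocone, $T(B_1)=\E T(B_1)+\int_0^1 u_s\,dB_s$ along a Brownian motion $(B_s)_{s\le1}$, where $u_s=\E[\nabla^2\varphi(B_1)\mid\mathcal F_s]$ is symmetric with $0\preceq u_s\preceq I_d$. Adjoin an independent Brownian motion $B'$ and set $\tilde N_1:=\int_0^1 u_s\,dB_s+\int_0^1\sqrt{I_d-u_s^2}\,dB'_s$; its quadratic variation is $I_d$, so $\tilde N_1\sim\mathcal N(0,I_d)$ by L\'evy's characterization, while $\E[\tilde N_1\mid B]=S(B_1)$. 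Conditional Jensen then gives $\E g(S(X))\le\E g(\tilde N_1)=\E g(X)$. This is essentially the F\"ollmer-process route you mention only as a fallback; it should be the main argument, because the OU interpolation as written does not go through.
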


    \bibliography{referencenew}
\end{document}